\numberwithin{equation}{section}
\newtheorem{theorem}{Theorem}[section]
\newtheorem{lemma}[theorem]{Lemma}
\newtheorem{proposition}[theorem]{Proposition}
\newtheorem{corollary}[theorem]{Corollary}
\theoremstyle{definition}
\newtheorem{definition}[theorem]{Definition}
\newtheorem{example}[theorem]{Example}
\theoremstyle{remark}
\newtheorem{remark}[theorem]{\bf{Remark}}
\newcommand{\lcross}{{>\!\!\!\triangleleft}}
\newcommand{\bicross}{{\blacktriangleright\!\!\!\triangleleft}}
\newcommand{\lrbicross}{{\blacktriangleright\!\!\!\triangleleft}}
\newcommand{\dcross}{{\bowtie}}
\newcommand{\lbiprod}{{>\!\!\!\triangleleft\kern-.33em\cdot}}
\newcommand{\rbiprod}{{\cdot\kern-.33em\triangleright\!\!\!<}}
\newcommand{\codcross}{{\blacktriangleright\!\!\blacktriangleleft}}
\newcommand{\cop}{\mathrm{\rm cop}}
\newcommand{\op}{\mathrm{\rm op}}
\newcommand{\mathleft}{\@fleqntrue\@mathmargin0pt}
\newcommand{\mathcenter}{\@fleqnfalse}
\renewcommand{\o}{{}_{\scriptscriptstyle(1)}} 
\renewcommand{\t}{{}_{\scriptscriptstyle(2)}}
\renewcommand{\th}{{}_{\scriptscriptstyle(3)}}
\newcommand{\fo}{{}_{\scriptscriptstyle(4)}}
\newcommand{\fiv}{{}_{\scriptscriptstyle(5)}}
\newcommand{\six}{{}_{\scriptscriptstyle(6)}}
\newcommand{\sev}{{}_{\scriptscriptstyle(7)}}
\newcommand{\ei}{{}_{\scriptscriptstyle(8)}}
\newcommand{\C}{{\Bbb C}}
\newcommand{\Z}{{\Bbb Z}}
\newcommand{\bt}{\mathbf{t}}
\newcommand{\bs}{\mathbf{s}}
\newcommand{\bx}{\mathbf{x}}
\newcommand{\bu}{\mathbf{u}}
\newcommand{\eps}{\epsilon}
\newcommand{\CR}{{\mathcal R}}
\newcommand{\CC}{{\mathcal C}}
\newcommand{\CM}{{\mathcal M}}
\newcommand{\tens}{\otimes}
\newcommand{\id}{{\rm id}}
\newcommand{\extd}{{\rm d}}
\renewcommand{\(}{{(}}
\newcommand{\la}{{\triangleright}}
\newcommand{\ra}{{\triangleleft}}
\newcommand{\uunderline}[1]{\mkern3mu\underline{\mkern-3mu #1\mkern-3mu}\mkern3mu}
\newcommand{\ooverline}[1]{\mkern3mu\overline{\mkern-3mu #1\mkern-3mu}\mkern3mu}
\newcommand{\baro}{\ooverline{{}_{\scriptscriptstyle(1)}}}
\newcommand{\barnot}{\ooverline{{}_{\scriptscriptstyle(0)}}}
\newcommand{\barinfi}{\ooverline{{}_{\scriptscriptstyle(\infty)}}}
\newcommand{\tilo}{\widetilde{{}_{\scriptscriptstyle(1)}}}
\newcommand{\tilnot}{\widetilde{{}_{\scriptscriptstyle(0)}}}
\newcommand{\undo}{\uunderline{\o}}
\newcommand{\undt}{\uunderline{\t}}
\title[Quantum differentials on cross product Hopf algebras]{Quantum differentials on cross product Hopf algebras}
\author{Ryan Aziz and Shahn Majid}
\address{School of Mathematical Sciences\\ Queen Mary University of London \\ Mile End Rd, London E1 4NS }
\email{r.aziz@qmul.ac.uk, s.majid@qmul.ac.uk}
\thanks{LPDP Indonesian Endowment Fund For Education No. 1571072104910081}
\begin{document}
\begin{abstract} We construct canonical strongly bicovariant differential graded algebra structures on all four flavours of cross product Hopf algebras, namely double cross products  $A\hookrightarrow A\dcross H\hookleftarrow H$, double cross coproducts $A\twoheadleftarrow A\codcross H\twoheadrightarrow H$, biproducts $A{\buildrel\hookrightarrow\over \twoheadleftarrow}A\rbiprod B$ and bicrossproducts $A\hookrightarrow A\bicross H\twoheadrightarrow H$  on the assumption that the factors have strongly bicovariant calculi $\Omega(A),\Omega(H)$  (or a braided version $\Omega(B)$). We use super versions of each of the  constructions. Moreover, the latter three quantum groups all coact canonically on one of their factors and we show that this coaction is differentiable. In the case of the Drinfeld double  $D(A,H)=A^{\rm op}\dcross H$ (where $A$ is dually paired to $H$), we show that its canonical actions on $A,H$ are differentiable. Examples include a canonical $\Omega(\C_q[GL_2\ltimes \C^2])$ for the quantum group of affine transformations of the quantum  plane  and  $\Omega(\C_\lambda[{\rm Poinc_{1,1}}])$ for the bicrossproduct Poincar\'e quantum group in 2 dimensions. We also show that  $\Omega(\C_q[GL_2])$ itself is uniquely determined by differentiability of the canonical coaction on the quantum plane and of the determinant subalgebra.  \end{abstract}
\maketitle

\section{Introduction}\label{sec 1} 

A modern `quantum groups' approach to noncommutative geometry starts with differential structures on quantum groups or Hopf algebras and associated comodule algebras, expressed in the form of a differential graded algebra (DGA), see \cite{QRG} and an extensive literature. In general, there will be many such DGAs on a given Hopf algebra even if we demand left and right translation invariance (i.e., bicovariance). The most general result here, an immediate consequence of the Hopf module lemma first pointed out by  Woronowicz\cite{Wor}, is that bicovariant calculi at first order correspond to (say) right ideals of the augmentation ideal that are stable under the right adjoint coaction. This, however, only translates the problem to finding such ideals. If the Hopf algebra is coquasitriangular and essentially factorisable then a general classification was obtained in \cite{Ma:cla} and later in \cite{BS} in terms of the irreducible corepresentations, but for other types of Hopf algebra there are no such general results. Of particular interest are inhomogeneous quantum groups, normally some form of cross product of `translations' and `rotations' coacting on another copy of the former. This is needed in physics for a quantum Poincar\'e group coordinate algebra coacting on quantum spacetime, but until now there has generally been no canonical choice of differential structure needed further to develop the geometry of such inhomogeneous quantum groups. 

Here we solve this problem for all main flavours  of inhomogeneous quantum groups in a way which, rather than classifying all bicovariant calculi, selects out a canonical calculus for given calculi on the factors with the property  that the inhomogeneous quantum group coacts {\em differentiably} on its  canonically associated comodule algebra. This applies to biproduct Hopf algebras $A\rbiprod B$ where $A$ acts and coacts on a braided-Hopf algebra $B$ from the same side\cite{Rad,Ma:skl,Ma:book} and the biproduct coacts on $B$, bicrossproducts $A\bicross H$ where $H$ acts on $A$ and $A$ coacts on $H$ \cite{Ma:phy,Ma:book} and the bicrossproduct coacts on $H$, and double cross coproducts $A\codcross H$ where each factor coacts on the other and the double cross coproduct coacts on each factor. The fourth flavour is in principle the double cross product $A\dcross H$ where each factor acts on the other and the double acts canonically on $A^*,H^*$,  but differentiability would need us to have calculi on $A^*,H^*$ which is further data that would need to be supplied. The special case of the Drinfeld double $D(A,H)=A^{\op}\dcross H$ in the generalised form for dually paired Hopf algebras $A,H$ (as introduced by the 2nd author\cite{Ma:phy,Ma:book}), however, has a canonical action on $H,A$ and these we show are differentiable {\em as actions}. (This is a  slightly unusual concept for a noncommutative geometer if we think of our algebras as `coordinate algebras', but arises naturally here.)  
We also note that first order differential calculi on finite group bicrossproducts $\C(M)\bicross \C G$, where a finite group factorises into $G,M$, were classified in \cite{NME} if one wants the full moduli of these in isolation. 

Our approach starts with the notion of a {\em strongly bicovariant} exterior algebra $\Omega(A)$ introduced in \cite{MaTao}. Here an exterior algebra means $\Omega(A)=\oplus_i\Omega^i$ is a graded algebra with a graded derivation $\extd$ with $\extd^2=0$ (a DGA) which is generated by $\Omega^0=A$ and $\Omega^1=A\extd A$. This is strongly bicovariant if $\Omega(A)$ is a super-Hopf algebra with coproduct $\Delta_*$ (say) where $\Delta_*|_A=\Delta$, the coproduct of $A$, in degree zero and $\extd$ is a graded coderivation in the sense
\[ \Delta_*\extd = (\extd\tens \id+ (-1)^{|\ |}\tens\extd)\Delta_*,\]
where $(-1)^{|\ |}\omega=(-1)^{|\omega|}$ for $\omega$ of degree $|\omega|$. It was shown in \cite{MaTao} that the canonical exterior algebra due to Woronowicz\cite{Wor} of a first order bicovariant calculus is strongly bicovariant with $\Delta_*|_{\Omega^1}=\Delta_L+\Delta_R$ in terms of the coactions on $\Omega^1$ induced by $\Delta$ (that the Woronowicz construction gives a super-Hopf algebra here was already known\cite{Brz} while the supercoderivation property was new). Conversely, if $\Omega(A)$ is strongly bicovariant then $\Delta_*|_{\Omega^i}\subseteq (A\tens\Omega^i)\oplus\cdots\oplus (\Omega^i\tens A)$ recovers the left and right coactions on each  $\Omega^i$ as the outer components, making $\Omega(A)$ bicovariant as a left and right comodule algebra, but contains a lot more information in the intermediate components. One can add to this the idea  that an algebra $B$ with say a right coaction $\Delta_R:B\to B\tens A$ has an exterior algebra $\Omega(B)$ such that $\Delta_R$ extends to $\Delta_{R*}:\Omega(B)\to \Omega(B)\underline{\tens}\Omega(A)$ as a super comodule algebra. Then $\Delta_R$ is said to be {\em differentiable}. This is used in \cite{QRG} in a different context from the present paper, namely in the theory of quantum principal bundles and fibrations.  Exact definitions and any further conditions are provided in the preliminaries Section~\ref{secpre}. 

Our canonical construction in this context will be to construct the required exterior algebra as a super extension of the same flavour of cross (co)product. Thus we define $\Omega(A\dcross H):=\Omega(A)\dcross \Omega(H)$ and $\Omega(A\codcross H):=\Omega(A)\codcross\Omega(H)$ in Section~\ref{secdouble},  $\Omega(A\rbiprod B):=\Omega(A)\rbiprod \Omega(B)$ in Section~\ref{secbiprod} and $\Omega(A\lrbicross H):=\Omega(A)\lrbicross \Omega(H)$ in Section~\ref{secbicross}, using the obvious super-Hopf versions of the cross products.  In each case, the new part is to check that the exterior derivative $\extd$ extends correctly so that the result is strongly bicovariant, and that the canonical (co)actions where applicable are differentiable. 

We will see that this approach is remarkably effective across a range of otherwise rather complicated examples in the literature. Thus, bicovariant calculi on the Sweedler-Taft algebra $U_q(b_+)$ (a sub-Hopf algebra of $U_q(su_2)$) were studied in \cite{Oec} and our construction lands on the most natural one in the classification there, albeit in the dual language of $\C_q[B_+]=\C[t,t^{-1}]\rbiprod \C[x]$ where $\C[x]$ is developed as a braided-line in the category of $\Z$-graded spaces or $\C[t,t^{-1}]$-comodules. The exterior algebra $\Omega(\C_q[B_+])$ is in Proposition~\ref{ExB+} as a warm-up to Proposition~\ref{propexP} for $\Omega(\C_q[GL_2]\rbiprod\C_q^2)$, where the quantum plane $\C_q^2$ is viewed as a braided Hopf algebra in the category of $\C_q[GL_2]$-comodules. Moreover, our general construction ensures that the canonical coaction of $\C_q[B_+]$ on $\C[x]$ and the canonical coaction of  $\C_q[GL_2]\rbiprod\C_q^2$ on $\C_q^2$ are both differentiable for their natural $q$-differential calculi.  Similarly, one of the simplest bicrossproducts beyond finite groups is the Planck-scale Hopf algebra $\C[g,g^{-1}]\bicross\C[p]$ and Proposition~\ref{explanck} recovers a known exterior algebra $\Omega(\C[g,g^{-1}]\bicross\C[p])$  in \cite{MaOec} but now uniquely determined by the Hopf algebra coacting differentiably on $\C[x]$ with its unique finite-difference form of bicovariant calculus. This is a warm-up for the  Poincar\'e  quantum group $\C_\lambda[{\rm Poinc}_{1,1}]= \C[SO_{1,1}] \bicross U(b_+)$  in a well-known family of models of quantum spacetime, see \cite[Chapter~9.1]{QRG} for a recent account. We show in Proposition~\ref{expoinc} that this has a uniquely determined exterior algebra $\Omega(\C_\lambda[{\rm Poinc}_{1,1}])$ such that the natural coaction of the Poincar\'e quantum  group on the associated quantum spacetime $U(b_+)$ is differentiable. Here  $U(b_+)$ with primitive generators $x,t$ and relations $[x,t]=\lambda x$ is regarded as the algebra of spacetime coordinates in 1+1 dimensions, equipped with its unique Poincar\'e covariant calculus from\cite{Sit}. 

The larger picture that emerges in this work is that we should not choose differential structures independently on each algebra but in a coherent manner such that natural maps between them are differentiable. Thus, it is well known following Manin\cite{Manin} that the structure of $\C_q[GL_2]$ itself is largely determined by its coaction on the quantum plane (and on its Koszul dual). In passing, in Proposition~\ref{new 4D calc}, we go a step further and show that the standard but rather complicated relations of its 4D differential calculus are similarly determined uniquely by requiring differentiablity of this coaction on the quantum plane and of the determinant subalgebra. Also, without giving details, it follows from Example~\ref{ex dcross B(R)} in Section~\ref{sectrans} that a previously known coaction of $\C_q[SO_{1,3}]$   on $q$-Minkowski spacetime is differentiable, for a suitable  choice of $\Omega(\C_q[SO_{1,3}])$. The general Hopf algebraic picture here is that if $A$ is any coquasitriangular Hopf algebra then the coquasitriangular structure provides a (skew) pairing of $A$ with itself and hence a  canonical `double' coquasitriangular Hopf algebra $D(A,A)= A\dcross A$, leading to natural calculi $\Omega'(A)\dcross\Omega(A)$ on $A\dcross A$ in Section~\ref{secAdcrossA}, where we can choose the calculi on each factor independently. Section~\ref{sectrans} brings these results together with the theory of transmutation.

\section{Preliminaries on strongly bicovariant differential calculi}\label{secpre}

Here we recall what is meant by a strongly bicovariant differential calculus on a bialgebra or Hopf algebra, following \cite{MaTao}, and what it means for a coaction by it to be differentiable. We also look at what it means for an action to be differentiable. The section includes some first lemmas that will be needed later as well the example of the FRT bialgebra $A(R)$ in the $q$-Hecke case coacting differentiably on the associated quantum plane. 

\subsection{Super, braided and super-braided Hopf algebras}\label{secprehop}
Let $k$ be a field. We recall that a Hopf algebra $A$ is a unital algebra together with two algebra maps, the coproduct $\Delta : A\to A \tens A$ and counit $\epsilon : A \to k$, forming a coalgebra, and equipped with an antipode map $S : A\to A$ defined by $(Sa\o)a\t = \epsilon a = a\o Sa\t$ for all $a\in A$. We use the Sweedler notation  $\Delta a =a\o \tens a\t$ (summation understood). More generally, a super-Hopf algebra is a $\Z_2$-graded Hopf algebra $A$, where $A=A_0\oplus A_1$ with grade $|a|=i$ for $a\in A_i$, and $i=0,1$. Here the coproduct $\Delta:A\to A\underline{\tens} A$ respects the total grades and is an algebra map to the super tensor product algebra
\[(a\tens b)(c\tens d) = (-1)^{|b||c|}(ac \tens bd)\]
for $a,b,c,d\in A$. The counit also respects the grade and hence $\eps|_{A_1}=0$ if we work over a field $k$.  If $A$ is a super-Hopf algebra then $A\underline{\tens} A$ is also a super-Hopf algebra, with 
\[\Delta(a\tens b) = (-1)^{|a\t||b\o|}a\o \tens b\o \tens a\t \tens b\t\]
for $a,b\in A$ of homogeneous grade. The terms in the implicit summation are also taken with factors of homogeneous grade understood.  

The notions of action and coaction under a Hopf algebra similarly have super versions respecting the grade. Thus $V = V_0 \oplus V_1$ is a super right $A$-module algebra if $V$ is a super right $A$-module by $\ra : V\tens A \to A$ and additionally 
\[(v w)\ra a = (-1)^{|\omega||a\o|} (v\ra a\o)(w\ra a\t)\]
for  $v, w \in V$ and $a \in A$ with  $w,a$ of homogeneous grade. A super right $A$-module $V$ is a super right $A$-module coalgebra if additionally
\[\Delta(v\ra a) = (-1)^{|v\t||a\o|} v\o \ra a\o \tens v\t \ra a\t \]
for $v\in V$ and $a\in A$ of homogeneous grade. Similarly, $V$ is a super right $A$-comodule algebra if $V$ is a super right $A$-comodule by $\Delta_R : V \to V\tens A$, denoted by $\Delta_R v = v^{\barnot}\tens v^{\overline{\o}}$ (summation understood) and additionally
\[\Delta_R(vw) = (-1)^{|w^{\barnot}||v^{\baro}|}v^{\barnot}w^{\barnot} \tens v^{\baro}w^{\baro}\]
for $v,w\in V$ with the relevant components of homogeneous grade. A super right $A$-comodule $V$ is a super right $A$-comodule coalgebra if additionally
\[(\Delta\tens \id)\Delta_R v = (-1)^{|v\t{}^{\barnot}||v\o{}^{\baro}|}v\o{}^{\barnot}\tens v\t{}^{\barnot}\tens v\o{}^{\baro}v\t{}^{\baro}.\]
There is also an equally good left-handed notion of super $A$-(co)module (co)algebras.

The usual notion of crossed or Drinfeld-Radford-Yetter modules (cf. \cite{Ma:book, Ma:skl, Rad}) also has a super version: $V$ is a super right $A$-crossed module over a super-Hopf algebra $A$ if $V$ is both a super right $A$-module and a super right $A$-comodule, such that
\[\Delta_R(v\ra a) = (-1)^{|v^{\overline{\o}}|(|a\o|+|a\t|)+|a\o||a\t|}v^{\barnot}\ra a\t \tens (Sa\o)v^{\baro}a\th\]
for all  $v\in V$ and $a \in A$ with components of homogeneous grade. The category $\CM^A_A$ of super right $A$-crossed modules is a prebraided category with the braiding $\Psi: V \tens W \to W\tens V$ given by 
\[\Psi(v\tens w) =(-1)^{|v||w^{\barnot}|}w^{\barnot}\tens(v\ra w^{\overline{\o}})\]
for $v\in V$ and $w\in W$ with components of homogenous degree. This is invertible (so the category is braided) if $A$ has invertible antipode, which we will assume throughout in this context.

Next we recall that a braided Hopf algebra (or `braided group') $B$ as introduced in \cite{Ma:bra} is a Hopf algebra in a braided category $\CC$, where we view the unit element as a morphism $\eta : \underline{1} \to B$, and the product, coproduct, counit $\eps:B\to\underline{1}$ and antipode are morphisms. In a concrete setting, we denote the  coproduct of a braided Hopf algebra as $\underline{\Delta} b = b\underline{\o}\tens b\underline{\t}$ (for $b\in B$, summation understood) where $\underline{\Delta}$ is an algebra morphism to the braided tensor product algebra so that $\underline{\Delta}(bc) = b\underline{\o}\Psi(b\underline{\t} \tens c\underline{\o})c\underline{\t}$, with the braiding $\Psi$ of the category on $B\tens B$. In the category of super or $\Z_2$-graded vector spaces, this just means a super-Hopf algebra since the categorical braiding on $B$ is  $\Psi(b\tens c) = (-1)^{|b||c|}c\tens b$. We will also need the notion of super braided-Hopf algebra satisfying the same axioms as a braided-Hopf algebra but $\Z_2$-graded  and with an extra factor $(-1)^{| \ | | \ |}$ in every braiding. This assumes direct sums in the category and can be viewed as a braided Hopf algebra in a super version of the braided category where objects are of the form $V_0\oplus V_1$ and the braiding has the additional signs according to the degree.  

When $A$ is an ordinary Hopf algebra and $B$ is a braided-Hopf algebra in $\CM^A_A$, the  category of ordinary right $A$-crossed modules, there is an ordinary Hopf algebra $A\rbiprod B$. This is the bosonisation cf.\cite{Ma:bos} or more precisely the biproduct\cite{Rad,Ma:skl} of $B$ built on $A \tens B$ with product and coproduct
\[(a \tens b)(c \tens d)= ac\o \tens (b \ra c\t)d\]
\[\Delta(a \tens b) = a\o \tens b\underline{\o}^{\barnot}\tens a\t b\underline{\o}^{\baro}\tens b\underline{\t}\]
for $a,c\in A$, $b,d\in B$. This structure was found in \cite{Rad} from a study of Hopf algebras with projection (they are of this form) prior to the theory of braided-Hopf algebras, while the above formulation is due to the second author in \cite{Ma:skl}. If $A$ is (co)quasitriangular then a functor\cite{Ma:dou, Ma:book} from $A$-(co)modules to $A$-crossed modules includes  (co)bosonisation in the more strict original sense of \cite{Ma:bos}. There is also an equally good left-handed version for braided-Hopf algebra $B$ in the category of left $A$-crossed modules ${}^A_A\CM$ which gives a bosonisation $B\lbiprod A$. At this point it should be clear that these constructions too have obvious super versions, which we will use in what follows.

\subsection{Differentials on Hopf algebras}\label{secpredif}
A differential calculus on an algebra $A$ is an $A$-$A$-bimodule $\Omega^1 = \mathrm{span}\{a\extd b\}$ where $\extd: A \to \Omega^1$ is a linear map obeying the Leibniz rule $\extd(ab)=(\extd a) b + a \extd b$ for all $a,b\in A$. The calculus $\Omega^1$ is said to be inner if there is an element $\theta \in \Omega^1$ such that $\extd = [\theta, \ ]$.
If, moreover,  $A$ is a bialgebra or Hopf algebra then $\Omega^1$ is said to be left-covariant if $\Omega^1$ is a left $A$-comodule with left coaction $\Delta_L : \Omega^1 \to A \tens \Omega^1$ such that $\Delta_L \extd = (\id \tens \extd )\Delta$. It is said to be right covariant if $\Omega^1$ is a right $A$-comodule with right coaction $\Delta_R : \Omega^1 \to \Omega^1 \tens A$ such that $\Delta_R \extd= (\extd \tens \id)\Delta$. If $\Omega^1$ is both left and right covariant, it is said to be bicovariant.

By {\em exterior algebra} on $A$, we mean that $A,\Omega^1$ as above extend to a differential graded algebra $\Omega = \oplus_{n\geq 0}\Omega^n$ generated by $A=\Omega^0,\Omega^1$ and $\extd$ extends  to $\extd : \Omega^n \to \Omega^{n+1}$ such that $\extd^2=0$ and the graded Leibniz rule $\extd(\omega \eta) = (\extd\omega)\eta + (-1)^{|\omega|}\omega\extd \eta$ holds for all $\omega,\eta \in \Omega$. Here $|\omega|$ denotes the degree of an element $\omega$ of homogeneous degree. Again, when $A$ is a bialgebra or Hopf algebra, $\Omega$ is left covariant if it is a left $A$-comodule algebra with degree-preserving $\Delta_L$ commuting with $\extd$, and similarly for the right covariant and bicovariant cases. Finally, an exterior algebra $\Omega$ on $A$ is {\em strongly bicovariant}\cite{MaTao} if it is respectively a super-bialgebra or super-Hopf algebra with the super grade given by the form degree mod 2,  the super-coproduct $\Delta_*$ is degree-preserving and restricts to the coproduct of $A$, and  $\extd$ is a super-coderivation in the sense
\begin{equation}\label{super-coderivation}
\Delta_* \extd \omega = (\extd \tens \id + (-1)^{| \ |}\id \tens \extd)\Delta_* \omega
\end{equation}
for all $\omega\in \Omega$ as discussed in the introduction. It is proved in \cite{MaTao} that any strongly bicovariant exterior algebra is bicovariant, justifying the terminology, with $\Delta_L,\Delta_R$ on $\Omega^i$ extracted from the relevant degree component of $\Delta_*|_{\Omega^i}$. For example, $\Delta_*|_{\Omega^1}=\Delta_L+\Delta_R$ for the coactions on $\Omega^1$.

Given an algebra with differential calculus $(A,\Omega^1,\extd)$, one has the maximal prolongation exterior algebra  $\Omega_{\max}$ where we impose the quadratic relation $\sum_i \extd b_i \extd c_i + \sum_j \extd r_j \extd s_j$ whenever $\sum_i \extd b_i.c_i - \sum_j r_j\extd s_j =0$ is a relation in $\Omega^1$, where $b_i, c_i, r_j, s_j \in A$. For a bicovariant $(A,\Omega^1,\extd)$ on a Hopf algebra one has a canonical exterior algebra $\Omega_{\rm wor}$ in \cite{Wor}, see \cite{Ma:hod} for a modern treatment. In this case both $\Omega_{\rm max}$ and $\Omega_{\rm wor}$ are strongly bicovariant, see \cite{Ma:hod} and \cite{Brz} respectively.

Finally, in the strongly bicovariant Hopf algebra case it follows\cite{MaTao} by a super version of the Radford-Majid biproduct theorem\cite{Rad,Ma:skl}  that $\Omega \cong A\rbiprod\Lambda$ is  a super-bosonisation, where $\Lambda = \oplus_{i\geq 1} \Lambda^i$ is the subalgebra of left-invariant differential forms and forms a super braided-Hopf algebra in category of right $A$-crossed modules $\CM^A_A$. 

\begin{lemma}\label{lemma diff tens prod}
Let $A,H$ be Hopf algebras and $\Omega(A), \Omega(H)$ strongly bicovariant exterior algebras on them. Then $\Omega(A\tens H):=\Omega(A)\underline{\tens} \Omega(H)$ is a strongly bicovariant exterior algebra on $A\tens H$ with super tensor product algebra, super tensor coproduct coalgebra and differential
\[\extd (\omega \tens \eta) = \extd_A \omega \tens \eta + (-1)^{|\omega|} \omega \tens \extd_H \eta\]
for all $\omega \in \Omega(A)$ and $\eta \in \Omega(H)$. Furthermore, $\Omega(A\tens H)\cong (A \tens H)\rbiprod (\Lambda_A \tens \Lambda_H)$. 
\end{lemma}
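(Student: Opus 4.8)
The plan is to build the result on the general facts, recalled in the preliminaries, that the super tensor product of two super-Hopf algebras is again a super-Hopf algebra and that every strongly bicovariant exterior algebra bosonises as $\Omega\cong A\rbiprod\Lambda$. I would organise the proof into (i) the super-Hopf and DGA structure, (ii) the super-coderivation property, and (iii) the bosonisation isomorphism, leaving (ii) as the only step requiring genuine computation.

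First I would record that $\Omega(A)\underline{\tens}\Omega(H)$, with the super tensor product algebra, the super tensor coproduct $\Delta_*$, counit $\eps\tens\eps$, and antipode assembled from the two (invertible) super-antipodes, is a super-Hopf algebra by the general construction of the preliminaries; here $\Delta_*$ is manifestly degree-preserving and restricts in degree $0$ to the tensor product Hopf algebra coproduct of $A\tens H$. That the prescribed $\extd$ is a well-defined super-derivation with $\extd^2=0$ is the standard tensor-product-of-DGAs computation: writing $\extd=\extd_A\tens\id+(-1)^{|\ |}\id\tens\extd_H$, the cross terms in $\extd^2$ cancel by the Koszul sign while the squares vanish by $\extd_A^2=\extd_H^2=0$, and the graded Leibniz rule follows from those on the factors together with the sign rule of the super tensor product. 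Since $\Omega(A),\Omega(H)$ are generated in degrees $0$ and $1$, so is $\Omega(A)\underline{\tens}\Omega(H)$, with $\Omega^1=(\Omega^1_A\tens H)\oplus(A\tens\Omega^1_H)$ recovered as $(A\tens H)\extd(A\tens H)$; hence it is an exterior algebra on $A\tens H$.

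The heart of the argument is the super-coderivation property \eqref{super-coderivation}, and here I would use a derivation argument to avoid an exhaustive induction on degree. Set $F_1=\Delta_*\extd$ and $F_2=\widetilde\extd\,\Delta_*$, where $\widetilde\extd=\extd\tens\id+(-1)^{|\ |}\id\tens\extd$ is the differential of the super tensor square DGA $\Omega(A\tens H)\underline{\tens}\Omega(A\tens H)$. Because $\extd$ is a super-derivation and $\Delta_*$ is a degree-preserving super-algebra map, both $F_1$ and $F_2$ obey the same super-Leibniz rule into $\Omega(A\tens H)\underline{\tens}\Omega(A\tens H)$ regarded as a bimodule via $\Delta_*$; thus $G:=F_1-F_2$ is a super-derivation into this bimodule and vanishes as soon as it vanishes on an algebra generating set, which we take to be $A\tens H$ together with $\extd(A\tens H)$. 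On $\extd(A\tens H)$ the vanishing of $G$ is automatic once it is known on $A\tens H$, since for $x\in A\tens H$ we have $G(\extd x)=-\widetilde\extd\,\Delta_*\extd x=-\widetilde\extd(F_1 x)=-\widetilde\extd(F_2 x)=-\widetilde\extd^2\Delta_* x=0$ using $\extd^2=\widetilde\extd^2=0$. It therefore remains only to verify \eqref{super-coderivation} on degree-zero elements $a\tens h$, which unwinds, after substituting $\extd(a\tens h)=\extd_A a\tens h+a\tens\extd_H h$ and expanding $\Delta_*$, into the degree-$0$ super-coderivation identities for $\extd_A$ and $\extd_H$ on $A$ and $H$. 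The one place care is needed is the bookkeeping of the Koszul signs coming from the middle transposition in the super tensor coproduct; this is the step I expect to be the main (if routine) obstacle, and it is a finite check.

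Finally, for the isomorphism I would invoke the bosonisation theorem of the preliminaries: any strongly bicovariant $\Omega$ satisfies $\Omega\cong A\rbiprod\Lambda$ with $\Lambda$ the super braided-Hopf algebra of left-invariant forms in $\CM^A_A$. The left coaction on $\Omega(A\tens H)$ is the outer component of $\Delta_*$, which by construction is the super tensor product of the left coactions on $\Omega(A)$ and $\Omega(H)$; hence left-invariance factorises and the invariant subalgebra is exactly $\Lambda_A\tens\Lambda_H$. Its super braided-Hopf algebra structure is the super tensor product of those of $\Lambda_A,\Lambda_H$ transported along the monoidal functor $\CM^A_A\times\CM^H_H\to\CM^{A\tens H}_{A\tens H}$, so the general theorem gives $\Omega(A\tens H)\cong(A\tens H)\rbiprod(\Lambda_A\tens\Lambda_H)$, as required.
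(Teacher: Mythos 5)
Your proposal is correct, and the first two parts are in the same spirit as the paper's proof, but with a different emphasis; the third part takes a genuinely different route. For the super-coderivation property the paper simply observes that it suffices to compare both sides of (\ref{super-coderivation}) on the generating subalgebras $\Omega(A)\tens 1$ and $1\tens \Omega(H)$, where the identity literally restricts to the hypothesis that $\Omega(A)$, $\Omega(H)$ are strongly bicovariant; your $G=F_1-F_2$ derivation-difference argument makes the ``check on generators'' step explicit and pushes the verification all the way down to degree-zero elements $a\tens h$, at the cost of the Koszul-sign bookkeeping you flag. Both are valid; the paper's version is shorter because on $\omega\tens 1$ nothing needs to be unwound. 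The substantive divergence is in the bosonisation claim. The paper does not invoke the general structure theorem of \cite{MaTao} at this point: it explicitly equips $\Lambda_A\tens\Lambda_H$ with the tensor product action and coaction of $A\tens H$, verifies by a direct computation that this gives an object of $\CM^{A\tens H}_{A\tens H}$ (the crossed module compatibility), and then exhibits the isomorphism $\Omega(A)\tens\Omega(H)\cong A\rbiprod\Lambda_A\tens H\rbiprod\Lambda_H\cong (A\tens H)\rbiprod(\Lambda_A\tens\Lambda_H)$ concretely as a swap of the middle tensor factors. Your route --- cite the theorem that any strongly bicovariant exterior algebra is a super-bosonisation of its left-invariant forms, then identify the invariants of the tensor product coaction as $\Lambda_A\tens\Lambda_H$ --- is more conceptual and shorter, but it buys less explicit information: the paper's computation records the concrete crossed module structure and the concrete isomorphism, and it avoids relying on the (standard, but worth stating) fact that invariants of an exterior tensor product of comodules over $A\tens H$ factorise as the tensor product of the invariants of the factors, which is the one step in your argument you should spell out.
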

\begin{proof}

 The graded Leibniz rule holds since $\extd$ restricts to $\extd_A$ and $\extd_H$ on $\Omega(A)$ and $\Omega(H)$, and the algebra is just a super tensor product algebra. For the super-coderivation property, it again suffices to
check this separately on $\omega\tens 1$ and $1\tens \eta$, where it clearly reduces to the same property for $\Omega(A)$ and $\Omega(H)$ respectively. 

Furthermore, $A\tens H$ acts and coacts on $\Lambda_A \tens \Lambda_H$ by $(v\tens w)\ra (a\tens h) = v\ra a \tens w\ra h$ and $\Delta_R(v\tens w) = v^{\barnot}\tens w^{\barnot}\tens v^{\baro}\tens w^{\baro}$ for all $a\in A, h \in H$ and $v\in \Lambda_A, w\in \Lambda_H$, making $\Lambda_A \tens \Lambda_H$ an $A\tens H$-crossed module since
\begin{align*}
\Delta_R((v\tens w)\ra (a\tens h)) =& (v^{\barnot}\tens w^{\barnot})\ra (a\t\tens h\t) \tens (S(a\o \tens h\o))(v^{\baro}\tens w^{\baro})(a\th \tens h\th)\\
=& (v^{\barnot}\ra a\t)\tens (w^{\barnot}\ra h\t) \tens (Sa\o)v^{\baro}a\th \tens (Sh\o)w^{\baro}h\th\\
=& (v\ra a)^{\barnot}\tens (w\ra h)^{\barnot}\tens (v\ra a)^{\baro}\tens (w\ra h)^{\baro}\\
=&\Delta_R((v\ra a)\tens (w\ra h)).
\end{align*}
Thus there is a bosonisation $(A\tens H)\rbiprod (\Lambda_A \tens \Lambda_H)$ and it is easy to show that $\Omega(A)\tens \Omega(H)\cong A\rbiprod \Lambda_A \tens H \rbiprod \Lambda_H \cong (A\tens H) \rbiprod (\Lambda_A \tens \Lambda_H)$ by a swap of the middle tensor factors. 
\end{proof}

\subsection{Differentiable coactions and actions}

A map between algebras with differential structure 
as {\em differentiable} if it extends to a map of DGAs (possibly up to some degree). Let $A$ be bialgebra or Hopf algebra with exterior algebra $\Omega(A)$ (not necessarily bicovariant), and let $B$ be an algebra with exterior algebra $\Omega(B)$. We first formulate the concept of differentiable coaction as also discussed in \cite[Chapter 4.2]{QRG}. We also note the graded tensor product of DGAs as already used above, which then leads to following.

\begin{definition}\label{defdif} If $B$ is an $A$-comodule algebra, the coaction $\Delta_R b = b^{\barnot}\tens b^{\baro}$ is called {\em differentiable} if it extends to a degree-preserving map  $\Delta_{R*} : \Omega(B) \to \Omega(B) \underline{\tens} \Omega(A)$ of differential exterior algebras. \end{definition}
 Explicitly, we require $\Delta_{R*}$ a total degree preserving map of superalgebras such that $\Delta_{R*}\extd_B = \extd_{B\tens A} \Delta_{R*}$, where the latter means
\begin{equation}\label{diff coact}
\Delta_{R*}\extd_B \eta = \extd_B \eta^{\barnot *} \tens \eta^{\baro *} +(-1)^{|\eta^{\barnot *}|} \eta^{\barnot *}\tens \extd_A \eta^{\baro *}
\end{equation} 
with notation $\Delta_{R*}\eta = \eta^{\barnot *}\tens \eta^{\baro *} \in \Omega(B)\underline{\tens}\Omega(A)$ for all $\eta \in \Omega(B)$.

If $\Delta_{R*}$ exists then it is uniquely determined from $\Delta_R$. For instance, on $\Omega^1(B)$ we would need
\begin{align}\label{diff coact1}
\Delta_{R*}(b\extd_B c) = b^{\barnot}\extd_B c^{\barnot} \tens b^{\baro}c^{\baro} + b^{\barnot}c^{\barnot} \tens b^{\baro} \extd_A c^{\baro}
\end{align}
for all $b,c\in B$, where the first term entails a well-defined map  $\Delta_R : \Omega^1(B)\to \Omega^1(B) \tens A$ (which is then necessarily a coaction of $A$) and the second term entails a well-defined map 
\[\delta_R(b \extd_B c) = b^{\barnot}c^{\barnot} \tens b^{\baro} \extd_A c^{\baro}, \quad  \delta_R : \Omega^1(B) \to B \tens \Omega^1(A)\]
for all $b,c\in B$. In general degree the assumption similarly entails that there is a degree preserving map $\Delta_R:\Omega(B)\to \Omega(B)\tens A$ which one can show is a coaction making $\Omega(B)$ an $A$-covariant exterior algebra, while the stronger property of being differentiable entails all the other components of $\Delta_{R*}$ also being well defined. If (as often happens) we are already given a covariant calculus, i.e.  $\Omega(B)$ as an $A$-comodule algebra with $\extd_B$ a comodule map, we also say that the coaction $\Delta_R$ on $\Omega(B)$ is `differentiable' if the further data for the other components of $\Delta_{R*}$ exist.  

\begin{lemma}\label{maxDeltaR}\cite[Lemma 4.29]{QRG} Let $B$ be a right $A$-comodule algebra and assume that the map $\Delta_{R*} : \Omega^1(B) \to (\Omega^1(B)\tens A) \oplus (B \tens \Omega^1(A))$  in (\ref{diff coact1}) is well defined and that $\Omega(B)$ is the maximal prolongation of $\Omega^1(B)$.  Then $\Delta_R$ is differentiable.
\end{lemma}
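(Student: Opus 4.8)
The plan is to exhibit $\Delta_{R*}$ as the unique extension of a first-order calculus morphism to a morphism of differential graded algebras, using the universal property of the maximal prolongation. First I would observe that the target $\Lambda:=\Omega(B)\underline{\tens}\Omega(A)$ is itself a DGA: it is the super tensor product of the two DGAs $\Omega(B)$ and $\Omega(A)$, with degree-zero part $\Lambda^0=B\tens A$, degree-one part $\Lambda^1=(\Omega^1(B)\tens A)\oplus(B\tens\Omega^1(A))$, and differential $\extd_\Lambda(\omega\tens\eta)=\extd_B\omega\tens\eta+(-1)^{|\omega|}\omega\tens\extd_A\eta$, exactly as in Lemma~\ref{lemma diff tens prod}.

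Next I would assemble the first-order data. The coaction $\Delta_R:B\to B\tens A=\Lambda^0$ is an algebra map because $B$ is a right $A$-comodule algebra; call this degree-zero map $\phi^0$. The hypothesis provides a well-defined map $\phi^1:=\Delta_{R*}:\Omega^1(B)\to\Lambda^1$ given by (\ref{diff coact1}). I would then check that $(\phi^0,\phi^1)$ is a morphism of first-order differential calculi into $\Lambda$, namely: (i) $\phi^1$ is a $\phi^0$-bimodule map, which follows by expanding both sides of $\phi^1(b\,\extd_B c\cdot e)$ and $\phi^1(e\cdot b\,\extd_B c)$ using the Leibniz rule and the comodule-algebra property of $\Delta_R$; and (ii) $\phi^1\extd_B=\extd_\Lambda\phi^0$ on $B$, which is precisely (\ref{diff coact}) restricted to $B$ since, for $b\in B$, formula (\ref{diff coact1}) with the first argument set to $1$ gives $\Delta_{R*}(\extd_B b)=\extd_B b^{\barnot}\tens b^{\baro}+b^{\barnot}\tens\extd_A b^{\baro}=\extd_\Lambda(\Delta_R b)$. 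Both verifications are routine.

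With this in hand I would invoke the universal property of the maximal prolongation: since $\Omega(B)=\Omega_{\max}(B)$ and $\Lambda$ is a DGA, the first-order morphism $(\phi^0,\phi^1)$ extends uniquely to a morphism of DGAs $\Delta_{R*}:\Omega(B)\to\Lambda$. Such a morphism is automatically a total-degree-preserving superalgebra map commuting with the differentials, which is exactly the content of Definition~\ref{defdif}; hence $\Delta_R$ is differentiable and $\Delta_{R*}$ is the required extension.

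The step I expect to carry the real weight is confirming that the universal property applies, i.e. that no obstruction to well-definedness appears beyond degree one. This is where maximality is essential: every relation of $\Omega_{\max}(B)$ in degree $\geq 2$ is generated by applying $\extd$ to the degree-one relations, so if $\sum_i(\extd_B b_i)c_i-\sum_j r_j\extd_B s_j=0$ in $\Omega^1(B)$ then its image under the well-defined $\phi^1$ vanishes in $\Lambda^1$, and applying $\extd_\Lambda$ (which $\phi^1$ intertwines with $\extd_B$) kills the corresponding quadratic relation $\sum_i\extd_B b_i\,\extd_B c_i+\sum_j\extd_B r_j\,\extd_B s_j$. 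Thus the sole genuine hypothesis, well-definedness of $\Delta_{R*}$ on $\Omega^1(B)$ (the delicate condition to arrange in examples), propagates to all higher degrees, and the lemma follows.
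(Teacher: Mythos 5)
Your proposal is correct and rests on the same mechanism as the paper's proof: since $\Omega(B)$ is the maximal prolongation, the only relations beyond degree one are the quadratic ones obtained by applying $\extd$ to the degree-one relations, and their images vanish because the hypothesised well-defined map on $\Omega^1(B)$ intertwines $\extd_B$ with the differential of the super tensor product $\Omega(B)\underline{\tens}\Omega(A)$. The only difference is packaging: you phrase this as the universal property of $\Omega_{\max}$ applied to a first-order morphism into a DGA, whereas the paper carries out the identical computation by hand, checking that the three graded components $\Omega^2(B)\tens A$, $\Omega^1(B)\tens\Omega^1(A)$ and $B\tens\Omega^2(A)$ of the image of the quadratic relation each vanish --- which is precisely what your single application of the target's differential unpacks to.
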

\begin{proof} This is in \cite{QRG} but to be self-contained, we give our own short proof. In fact, it suffices to prove that $\Delta_{R*}$ extends to $\Omega^2(B)$ since the maximal prolongation is quadratic. So we need to check that $\Delta_{R*}(\xi\eta) = \Delta_{R*}(\xi)\Delta_{R*}(\eta)$ is well defined for $\xi, \eta \in \Omega^1(B)$. Suppose we have the relation $b \extd_B c =0$ in $\Omega^1(B)$ (sum of such terms understood) which implies that $\extd_B b\extd_B c =0 \in \Omega^2(B)$. Applying $\Delta_{R*}$ to the relation in  $\Omega^1(B)$ we have
\[b^{\barnot} \extd_B c^{\barnot} \tens b^{\baro}c^{\baro}=0, \quad b^{\barnot} c^{\barnot} \tens b^{\baro}\extd_A c^{\baro}=0.\]
Applying $\id \tens \extd_A$ to the first equation and $\extd_B \tens \id$ to the second equation then subtracting them gives us
\begin{align*}
b^{\barnot}&\extd_B c^{\barnot} \tens (\extd_A b^{\baro}) c^{\baro} - (\extd_B b^{\barnot})c^{\barnot} \tens b^{\baro}\extd_A c^{\baro}=0,
\end{align*}
which is the $\Omega^1(B) \tens \Omega^1(A)$ part of $\Delta_{R*}(\extd_B b \extd_B c)$. Applying $\extd_B \tens \id$ to the first equation gives
\[\extd_B b^{\barnot} \extd_B c^{\barnot} \tens b^{\baro}c^{\baro}=0\]
which is the $\Omega^2(B)\tens \id$ part. Finally, applying $\id \tens \extd_A$ to the second equation gives
\[ b^{\barnot} c^{\barnot} \tens \extd_A b^{\baro}\extd_A c^{\baro} =0\]
which is the $B \tens \Omega^2(A)$ part. Since all relations in the maximal prolongation are sent to zero then $\Delta_{R*}$ extends to $\Omega^2(B)$, which completes the proof.
\end{proof}

There is an equally good left-handed definition of differentiable coaction, where the left coaction $\Delta_L :  B \to A\tens B$ extends to a degree preserving map $\Delta_{L*} : \Omega(B)\to \Omega(A)\underline{\tens} \Omega(B)$ of exterior algebra. 

Although the above applies more generally, we now specialise to the case where $\Omega(A)$ is {\em strongly bicovariant}, which says in terms of Definition~\ref{defdif} that the coproduct of $A$ viewed as a left or right coaction, is differentiable. In this case $\Omega(A)$ is a super-bialgebra or super-Hopf algebra (the latter if $A$ is a Hopf algebra) and $\Delta_{R*}$ in Lemma~\ref{maxDeltaR} then makes $\Omega(B)$ an $\Omega(A)$-supercomodule algebra. At least in this context, we are now motivated to introduce the partially dual notion of a differentiable action on an algebra $B$ with exterior algebra $\Omega(B)$. This time the extension entails an action of $A$ making $\Omega(B)$ into an $A$-module algebra. Unlike the previous case, we take this in its entirely as the initial data. 

\begin{definition}\label{defactdif} Let $A$ be a bialgebra or Hopf algebra with $\Omega(A)$ strongly bicovariant. If an exterior algebra $\Omega(B)$ is an $A$-module algebra,  the action $\ra : \Omega(B)\tens A \to \Omega(B)$ is called {\em differentiable} if it extends to a degree-preserving map $\ra : \Omega(B)\underline{\tens} \Omega(A)\to \Omega(B)$  making $\Omega(B)$ an $\Omega(A)$-supermodule algebra such that  $\extd_B \ra=\ra \extd_{B\tens A}$. 
\end{definition}

The last condition explicitly is
\begin{align}
\extd_B (\eta \ra \omega) = (\extd_B \eta) \ra \omega + (-1)^{|\eta|}\eta \ra (\extd_A \omega) \label{diff act form}
\end{align}
for all $\eta \in \Omega(B)$ and $\omega\in \Omega(A)$.  

Given an $A$-module algebra structure of $\Omega(B)$, the extension to $\Omega(A)$ for a differentiable action is uniquely determined. For instance, on $\Omega^1(B)$  we require 
\[\extd_B(b\ra a) = (\extd_B b) \ra a + b\ra \extd_A a, \quad \extd_B((\extd_B b)\ra a) = -(\extd_B b)\ra \extd_A a\]
where $(\extd_B b) \ra a$ is given as is $\extd_B(b\ra a)$, hence $b\ra \extd_A a$ is determined, and hence also $\ra : B \tens \Omega^1(A)\to \Omega^1(B)$. Similarly, the second equation specifies $\ra : \Omega^1(B)\underline{\tens}\Omega^1(A)\to \Omega^2(B)$. We also require the supermodule algebra axiom, for example
\begin{align*}
(\eta \xi) \ra \extd_A a = (\eta \ra a \o) (\xi \ra \extd_A a\t) +(-1)^{|\xi|} (\eta\ra \extd_A a\o) (\xi \ra a\t)
\end{align*}
for all $\eta,\xi \in \Omega(B)$ and $a\in A$, hence $\Omega(B)\underline{\tens}\Omega^1(A)\to \Omega(B)$ is specified. We could have said that an action of $A$ on $B$ was differentiable if all of the above exists but we have not done so since the extension would not be uniquely determined from such a starting point. Also note that $\extd_B$ is not required to be an $A$-module map, so this data already differs from the idea that $\Omega(B)$ is covariant in a dual sense to the usual comodule notion. 

\begin{lemma}\label{diff right action} Let $A$ be a bialgebra or Hopf algebra and $\Omega(A)$ a maximal prolongation of a bicovariant $\Omega^1(A)$. If an exterior algebra $\Omega(B)$ is an $A$-module algebra and its action $\ra$ extends to a well-defined map $\ra : \Omega(B)\tens \Omega^1(A) \to \Omega(B)$  by
\[ \eta \ra ((\extd_A a)c) := (\eta \ra \extd_A a)\ra c,\quad \eta \ra \extd_A a := (-1)^{|\eta|}\big(\extd_B(\eta \ra a) -(\extd_B\eta) \ra a \big)\]
for all $\eta \in \Omega(A)$ and $a,c \in A$, then the original action $\ra$ is differentiable. \end{lemma}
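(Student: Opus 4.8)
The plan is to mimic the comodule argument of Lemma~\ref{maxDeltaR}, but now with the maximal prolongation living on the \emph{rotation} factor $\Omega(A)$. Since $\Omega(A)$ is generated by $A=\Omega^0(A)$ together with $\extd_A A$, I would first extend the given map to all of $\Omega(A)$ by iterating the degree-one action through the module law $\eta\ra(\omega\omega'):=(\eta\ra\omega)\ra\omega'$. Because the maximal prolongation is quadratic, its ideal of relations is generated in degree two, by the element $\sum_k(\extd_A u_k)(\extd_A w_k)$ attached to each first-order relation $\sum_k(\extd_A u_k)w_k=0$ (every element of $\Omega^1(A)$ being of the form $\sum_k(\extd_A u_k)w_k$, using $a\extd_A b=\extd_A(ab)-(\extd_A a)b$). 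Hence, exactly as in Lemma~\ref{maxDeltaR}, the only thing to verify for well-definedness is that these degree-two relations act as zero; all higher relations then follow automatically.

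Before that I would record the Leibniz identity (\ref{diff act form}) on generators. For $\omega=a\in A$ it holds by the very definition $\eta\ra\extd_A a=(-1)^{|\eta|}\big(\extd_B(\eta\ra a)-(\extd_B\eta)\ra a\big)$, which is just (\ref{diff act form}) rearranged. For $\omega=\extd_A a$ one has $\extd_A\extd_A a=0$, so (\ref{diff act form}) reduces to the claim $\extd_B(\eta\ra\extd_A a)=(\extd_B\eta)\ra\extd_A a$; applying $\extd_B$ to the defining formula and using $\extd_B^2=0$ together with the degree-zero case applied to $\extd_B\eta$ yields exactly this after collecting the super-signs.

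With these two cases in hand, the key computation is for a degree-two relation. Using the module law, then the definition of $\ra\extd_A w$ on $\xi=\eta\ra\extd_A u$ (of degree $|\eta|+1$), and finally $\extd_B(\eta\ra\extd_A u)=(\extd_B\eta)\ra\extd_A u$ from the previous step, one finds
\[\eta\ra\big((\extd_A u)(\extd_A w)\big)=(-1)^{|\eta|+1}\Big(\extd_B\big(\eta\ra((\extd_A u)w)\big)-(\extd_B\eta)\ra((\extd_A u)w)\Big).\]
Summing over a first-order relation $\sum_k(\extd_A u_k)w_k=0$ and invoking the hypothesis that $\ra$ is well defined on $\Omega^1(A)$, so that both $\eta\ra\sum_k(\extd_A u_k)w_k$ and $(\extd_B\eta)\ra\sum_k(\extd_A u_k)w_k$ vanish, the right-hand side collapses to $(-1)^{|\eta|+1}\extd_B(0)=0$. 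Thus every degree-two relation acts as zero and the action extends consistently to all of $\Omega(A)$.

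It then remains to upgrade this extension to an $\Omega(A)$-supermodule algebra action satisfying (\ref{diff act form}) in all degrees, matching Definition~\ref{defactdif}. The Leibniz identity propagates multiplicatively: if it holds for $\omega$ and $\omega'$ then the module law and the graded Leibniz rules for $\extd_A,\extd_B$ force it for $\omega\omega'$, so the generator cases extend it to all of $\Omega(A)$, which is precisely $\extd_B\ra=\ra\extd_{B\tens A}$. For the supermodule algebra axiom I would use that $\Omega(A)$ is strongly bicovariant (being the maximal prolongation of a bicovariant calculus) and that $\ra$ is a genuine module action, whence the set of $\omega$ satisfying the axiom is a subalgebra; it suffices to check it on $A$, where it is the given super $A$-module algebra structure, and on $\extd_A a$, where $\Delta_*\extd_A a=\extd_A a\o\tens a\t+a\o\tens\extd_A a\t$ reduces the check to the degree-zero module algebra property and (\ref{diff act form}). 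The main obstacle is the well-definedness step: getting the super-signs right in the Leibniz identity for $\extd_A a$ and in the displayed degree-two computation, and confirming that the first-order relations genuinely reduce everything to the hypothesis. The remaining propagation and module-algebra checks are then forced and routine.
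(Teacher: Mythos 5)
Your strategy is the same as the paper's: verify the Leibniz identity on the generators $a$ and $\extd_A a$, apply $\extd_B$ to a vanishing first-order relation to conclude that the attached quadratic relation acts as zero (the paper writes this as $0=\extd_B(\eta\ra(a\extd_A c))=(\extd_B\eta)\ra(a\extd_A c)+(-1)^{|\eta|}\eta\ra(\extd_A a\,\extd_A c)$), and then propagate the Leibniz rule and the supermodule algebra axiom from generators. Your identity $\extd_B(\eta\ra\extd_A a)=(\extd_B\eta)\ra\extd_A a$ and your displayed degree-two computation are both correct and match the paper's.

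There is, however, one concrete gap: your claim that ``the only thing to verify for well-definedness is that these degree-two relations act as zero'' is not quite right. The maximal prolongation is the tensor algebra of the bimodule $\Omega^1(A)$ \emph{over} $A$ modulo the quadratic relations, so products of $1$-forms are balanced over $A$, and your prescription ``iterate the degree-one action through the module law'' is ambiguous on an element such as $(\extd_A u)\,a\,(\extd_A w)$: parsing it as $\bigl((\extd_A u)a\bigr)(\extd_A w)$ or as $(\extd_A u)\bigl(a\,\extd_A w\bigr)$ gives a priori different answers. The two parsings agree precisely when $\mu\ra(a\extd_A c)=(\mu\ra a)\ra\extd_A c$ for all $\mu\in\Omega(B)$, and this is \emph{not} part of the hypothesis: the given formula defines the action only through forms written as $(\extd_A a)c$, while $a\extd_A c$ must be expanded as $\extd_A(ac)-(\extd_A a)c$, so the identity relates two genuinely different expressions. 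The paper opens its proof with exactly this verification (a short computation using the defining formula for $\ra\,\extd_A$ together with the degree-zero module algebra law), and you also use it silently in your ``subalgebra'' propagation of the supermodule algebra axiom, where associativity of the action across $A$-balanced products is needed. Once you insert this check, the rest of your argument goes through and coincides with the paper's proof; note also that your normalisation of first-order relations to the form $\sum_k(\extd_A u_k)w_k=0$ conveniently avoids needing the identity in the relations check itself, whereas the paper, which writes relations as $a\extd_A c=0$, needs it there as well.
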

\begin{proof}
First we check that $\ra : \Omega(B)\tens \Omega^1(A) \to \Omega(B)$ if defined as above gives an action of $\Omega^1(A)$ in the sense
\begin{align*}
\eta \ra (a\extd_A c) =& \eta \ra (\extd_A(ac)-(\extd_A a)c)=(-1)^{|\eta|}\extd_B(\eta \ra ac) - (-1)^{|\eta|}(\extd_B \eta)\ra ac - (\eta\ra \extd_A a)\ra c\\
=& (-1)^{|\eta|}\extd_B((\eta \ra a)\ra c)- (-1)^{|\eta|}((\extd_B \eta)\ra a)\ra c - (\eta\ra \extd_A a)\ra c\\
=&(-1)^{|\eta|}(\extd_B(\eta\ra a))\ra c + (\eta \ra a)\ra \extd_A c -(-1)^{|\eta|}((\extd_B \eta)\ra a)\ra c - (\eta \ra \extd_A a)\ra c\\
=&(-1)^{|\eta|}((\extd_B \eta)\ra a)\ra c + (\eta \ra \extd_A a)\ra c + (\eta \ra a)\ra \extd_A c -(-1)^{|\eta|}((\extd_B \eta)\ra a)\ra c - (\eta \ra \extd_A a)\ra c \\
=& (\eta \ra a ) \ra \extd_A c.
\end{align*}

We now suppose that $\Omega(A)$ is the maximal prolongation of $\Omega^1(A)$ and show that we can extend the above to a right action of $\Omega(A)$ of all degrees. The higher relations relations are quadratic of the form $\extd_A a \extd_A c=0$ for  $a\extd_A c=0$ (a sum of such terms understood) and we check that
\begin{align*}
0 = & \extd_B (\eta \ra (a\extd_A c)) = (\extd_B \eta) \ra (a\extd_A c) + (-1)^{|\eta|} \eta \ra (\extd_A a \extd_A c) = (-1)^{|\eta|} \eta \ra (\extd_A a \extd_A c)
\end{align*}
as required. Indeed this gives an action of $\Omega^2(A)$ in the sense
\begin{align*}
\eta \ra (\extd_A a \extd_A c) =& \eta \ra \extd_A(a\extd_A c) = (-1)^{|\eta|}(\extd_B(\eta\ra a\extd_A c) - (\extd_B \eta)\ra a\extd_A c)\\
=&(-1)^{|\eta|}\big(\extd_B((\eta\ra a)\ra \extd_A c) - ((\extd_B \eta)\ra a)\ra \extd_A c\big) = (\eta\ra \extd_A a)\ra \extd_A c,
\end{align*}
and since $\Omega(A)$ is the maximal prolongation of $\Omega^1(A)$, then there is no new relations in higher degree and thus $\ra$ can be extended further to be an action of $\Omega(A)$ of all degrees, making $\Omega(H)$ a super right $\Omega(A)$-module.

Next we check that $\Omega(B)$ is a super right $\Omega(A)$-module algebra with regard to the action of $\Omega^1(A)$,
\begin{align*}
(\eta\xi) \ra \extd_A a =& (-1)^{|\eta\xi|} (\extd_B((\eta\xi) \ra a)-(\extd_B(\eta\xi))\ra a)\\
=& (-1)^{|\eta|+|\xi|} \big(\extd_B((\eta \ra a\o)(\xi \ra a\t)) - ((\extd_B \eta)\xi) \ra a -(-1)^{|\eta|}(\eta \extd_B \xi) \ra a \big)\\
=&(-1)^{|\eta|+|\xi|}(\extd_B(\eta \ra a\o))(\xi \ra a\t) + (-1)^{|\xi|}(\eta \ra a\o)\extd_B(\xi \ra a\t)\\
& -(-1)^{|\eta|+|\xi|}((\extd_B\eta)\xi) \ra a-(-1)^{|\xi|}(\eta\extd_B \xi)\ra a\\
=&(-1)^{|\eta|+|\xi|}((\extd_B \eta) \ra a\o)(\xi \ra a\t) + (-1)^{|\xi|}(\eta \ra \extd_A a\o)(\xi \ra a\t)\\
&+(-1)^{|\xi|}(\eta \ra a\o)((\extd_B \xi) \ra a\t)+(\eta \ra a\o)(\xi \ra \extd_A a\t)\\
&-(-1)^{|\eta|+|\xi|}((\extd_B \eta)\ra a\o)(\xi \ra a\t) - (-1)^{|\xi|}(\eta \ra a\o)((\extd_B \xi)\ra a\t)\\
=&(\eta \ra a\o)(\xi \ra \extd_A a\t)+(-1)^{|\xi|}(\eta \ra \extd_A a\o)(\xi \ra a\t)
\end{align*}
where we see the coproduct $\Delta_*(\extd_A a)=a\o\tens\extd_A a\t+\extd_A a\o\tens a\t$ of $\Omega(A)$. 
As a direct consequence, one can find that
\begin{align*}
(\eta \xi)\ra ((\extd_A a)c) =& (\eta \ra (a\o c\o))(\xi \ra ((\extd_A a\t)c\t))+(-1)^{|\xi|}(\eta \ra ((\extd_A a\o)c\o))(\xi \ra (a\t c\t))\\
(\eta \xi) \ra (a \extd_A c) =& (\eta \ra (a\o c\o))(\xi \ra (a\t \extd_A c\t))+(-1)^{|\xi|}(\eta \ra (a\o \extd_A c\o))(\xi \ra (a\t c\t))
\end{align*}
as required for the action of general elements of $\Omega^1(A)$. Since $\Omega(A)$ is the maximal prolongation of $\Omega^1(A)$, then by taking $a\extd_A c =0$, one can check that
\begin{align*}
0=&\extd_B \big((\eta\xi)\ra (a\extd_A c)\big)=(\extd_B(\eta\xi)) \ra (a\extd_A c) + (-1)^{|\eta|+|\xi|}(\eta\xi)\ra (\extd_A a \extd_A c)
\end{align*}
implying $(\eta\xi)\ra (\extd_A a \extd_A c)=0$, making $\Omega(B)$ a super right $\Omega(A)$-module algebra.
\end{proof}

There is an equally good left-handed definition of differentiable action, where a left action $\la : A\tens B \to B$ extends to $\la : \Omega(A)\underline{\tens} \Omega(B)\to \Omega(B)$ making $\Omega(B)$ a left $\Omega(A)$-super-module algebra.

\subsection{Differentials on the FRT bialgebra}\label{secFRT}
We recall that the FRT-bialgebra\cite{FRT} $A(R)$ over a field $k$ is generated by $\bt = (t^i{}_j)$ such that
\begin{equation*}
R\bt_1\bt_2 = \bt_2\bt_1 R, \quad \Delta \bt = \bt \tens \bt, \quad \epsilon\bt = \id, 
\end{equation*}
where $R=(R^i{}_j{}^r{}_s)\in M_n(k)\tens M_n(k)$ where $(i,j)$ label the first copy and $(r,s)$ the second. In  the compact notation used, the numerical suffixes indicate the position in the tensor matrix product, e.g. $\bt_1 = \bt\tens \id$, $\bt_2= \id \tens \bt$, $R_{23} = \id \tens R$ etc. We ask for $R$ to satisfy the Yang-Baxter equation $R_{12}R_{13}R_{23} = R_{23}R_{13}R_{12}$ (equivalent to the braid relation) and for the present purpose to be  {\em $q$-Hecke}, which means that it satisfies
\begin{align}
(PR-q)(PR+q^{-1})=0,\label{R q-Hecke cond}
\end{align}
where $q\in k$, $q\ne 0$, and $P=(P^i{}_j{}^r{}_s)$ is the permutation matrix $P^i{}_j{}^r{}_s = \delta^i{}_s \delta^r{}_j$ in terms of the Kronecker delta or identity matrix. The $q$-Hecke condition is equivalent to  $R_{21}R = \id + (q-q^{-1})PR$ where  $R_{21}=PRP$ has the tensor factors swapped. Note that $-R_{21}^{-1}$ is also $q$-Hecke and gives the same $A(R)$ but `conjugate' constructions to those below.

It is already proven in \cite[Prop.~10.5.1]{Ma:book} that $A(R)$ in the $q$-Hecke case is an additive  braided Hopf algebra in the braided category of $A(R)^{\rm cop}\tens A(R)$-right comodules (or $A(R)$-bicomodules) with coproduct $\underline{\Delta}\bt=\bt\tens 1+1\tens\bt$, or in a compact notation  $\bt'' = \bt' + \bt$ where $\bt'$ is a second copy of $\bt$, and $\bt''$ obeys FRT-bialgebra relation provided $\bt'_1 \bt_2 = R_{21} \bt_2 \bt'_1 R$. This expresses the braided Hopf algebra homomorphism property of the coproduct with respect to the relevant braiding $\Psi(\bt_1\tens\bt_2)=  R_{21} \bt_2\tens \bt_1 R$, see \cite{Ma:book} for details. As a consequence of the additive braided-Hopf algebra structure, $A(R)$  has a bicovariant exterior algebra $\Omega(A(R))$ generated by $\bt$ and $\extd\bt$ with bimodule relations as in the next lemma. The new part is that  this makes $\Omega(A(R))$ strongly bicovariant. 

\begin{lemma}\label{FRTcalc}
Let $A(R)$ be the FRT-bialgebra with $R$ $q$-Hecke. The exterior algebra $\Omega(A(R))$ with bimodule and exterior algebra relations  $(\extd \bt_1)\bt_2 = R_{21}\bt_2\extd \bt_1 R$ and $\extd \bt_1\extd \bt_2 = -R_{21}\extd \bt_2\extd \bt_1 R$ as in \cite[Chapter~10.5.1]{Ma:book} is strongly bicovariant with  $\Delta_* \extd \bt = \extd \bt \tens \bt + \bt \tens \extd \bt$.
\end{lemma}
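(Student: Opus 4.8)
The plan is to let the super-coderivation requirement dictate the coproduct and then verify that everything is consistent. Strong bicovariance forces $\Delta_*|_{A(R)}=\Delta$ to be the matrix coproduct $\Delta_*\bt=\bt\tens\bt$ in degree zero, so applying \eqref{super-coderivation} to the degree-zero generators immediately gives $\Delta_*\extd\bt=(\extd\tens\id+\id\tens\extd)(\bt\tens\bt)=\extd\bt\tens\bt+\bt\tens\extd\bt$; that is, $\Delta_*\extd\bt$ is nothing but the matrix coproduct of $A(R)$ differentiated via the graded Leibniz rule in the super tensor product. So first I would define $\Delta_*$ to be the unique super-algebra homomorphism $\Omega(A(R))\to\Omega(A(R))\underline{\tens}\Omega(A(R))$ extending these two formulas on the generators $\bt$ and $\extd\bt$, together with the counit $\eps$ defined as the super-algebra map with $\eps\bt=\id$ and $\eps\extd\bt=0$.

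The substance of the proof is then well-definedness: I must check that $\Delta_*$ and $\eps$ send each defining relation of $\Omega(A(R))$ to a valid identity in the super tensor product algebra. For $\eps$ this is immediate, since the bimodule and exterior relations are homogeneous of positive form-degree and $\eps$ kills $\extd\bt$, while the FRT relation is respected because $A(R)$ is already a bialgebra. For $\Delta_*$, the degree-zero FRT relation is again inherited from the bialgebra structure of $A(R)$, so the real content is that $\Delta_*$ respects the bimodule relation $(\extd\bt_1)\bt_2=R_{21}\bt_2(\extd\bt_1)R$ and the exterior relation $\extd\bt_1\extd\bt_2=-R_{21}(\extd\bt_2)(\extd\bt_1)R$. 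I would expand both sides using that $\Delta_*$ is a homomorphism into the super tensor product, so that crossing an $\extd\bt$-factor past another odd factor produces the sign in $(\omega\tens\eta)(\omega'\tens\eta')=(-1)^{|\eta||\omega'|}\omega\omega'\tens\eta\eta'$, and then reduce each tensor leg using the FRT, bimodule and exterior relations of $\Omega(A(R))$ itself, finally rearranging the resulting $R$-matrices with the Yang--Baxter equation and the $q$-Hecke identity $R_{21}R=\id+(q-q^{-1})PR$. This is exactly the matrix computation that establishes in \cite[Prop.~10.5.1]{Ma:book} that $A(R)$ is an additive braided-Hopf algebra with cross relation $\bt'_1\bt_2=R_{21}\bt_2\bt'_1R$ and braiding $\Psi(\bt_1\tens\bt_2)=R_{21}\bt_2\tens\bt_1R$, now carried out with $\extd\bt$ playing the role of the odd second copy of $\bt$ and with the extra super-signs tracked throughout. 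I expect this verification, keeping the numerical matrix indices and the Hopf-algebra tensor factors straight simultaneously and confirming that the signs conspire correctly, to be the main obstacle.

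Finally I would assemble the bialgebra axioms. Coassociativity and the counit identities hold on the generators $\bt$ and $\extd\bt$ directly from the matrix coproduct (on $\extd\bt$ both reduce to the coassociativity and counit of $\Delta\bt=\bt\tens\bt$ differentiated once), and they then hold on all of $\Omega(A(R))$ since $\Delta_*$ and $\eps$ are algebra maps; this makes $\Omega(A(R))$ a super-bialgebra. For the super-coderivation property \eqref{super-coderivation} it holds on $\bt$ by the very construction of $\Delta_*$, and on $\extd\bt$ using $\extd^2=0$, where the two resulting degree-one terms $\pm\,\extd\bt\tens\extd\bt$ cancel via the sign in \eqref{super-coderivation}. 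It then propagates to arbitrary products by a routine induction: if it holds on $\omega$ and $\eta$, then, since $\Delta_*$ is a super-algebra map and $\extd$ a graded derivation, the two graded Leibniz rules match up term by term with the correct signs on $\omega\eta$. Together with $\Delta_*|_{A(R)}=\Delta$ and degree-preservation, this is precisely strong bicovariance.
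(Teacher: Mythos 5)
Your proposal is correct and follows essentially the same route as the paper: $\Delta_*$ is fixed on generators by the matrix coproduct and the coderivation requirement, the substance is checking it respects the bimodule relation $(\extd\bt_1)\bt_2=R_{21}\bt_2\extd\bt_1 R$ and the degree-2 relation via the $q$-Hecke identity, and the super-bialgebra and coderivation axioms then follow on products. The only cosmetic differences are that the paper also re-verifies the graded Leibniz rule against the FRT relations for self-containedness and invokes the maximal-prolongation argument (Lemma~\ref{maxDeltaR}) to note the degree-2 check is in principle redundant before doing it anyway.
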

\begin{proof} The exterior algebra was already constructed in \cite{Ma:book}, but we provide a short check of the Leibniz rule so as to be self-contained.  Thus, on generators, 
\begin{align*}
\extd (R &\bt_1 \bt_2) = R ((\extd \bt_1)\bt_2 +\bt_1 \extd \bt_2)= RR_{21}\bt_2 \extd \bt_1 R + R\bt_1 \extd \bt_2\\
=&\bt_2 \extd \bt_1 R + (q-q^{-1})RP\bt_2 \extd\bt_1 R + R\bt_1 \extd \bt_2=R\bt_1 \extd \bt_2 (\id + (q-q^{-1})PR)+\bt_2 \extd \bt_1 R\\
=& R \bt_1 \extd \bt_2 R_{21}R + \bt_2 \extd \bt_1 R=(\extd \bt_2)\bt_1 R + \bt_2 \extd \bt_1  R=\extd(\bt_2 \bt_1 R)
\end{align*}
where we used $R_{21} = R^{-1}+(q-q^{-1})P$ for the 3rd equality and  $\bt_1 \extd \bt_2 P = P\bt_2 \extd \bt_1$ for the 4th. Applying $\extd$ once more to the stated bimodule relation on degree 1 gives the stated
relations in degree 2 and there are no further relations in higher degree, which  means that $\Omega(A(R))$ is the maximal prolongation of $\Omega^1(A(R))$. 

The new part is the super-coproduct $\Delta_*$, which is uniquely determined by the super-coderivation property but we need to check that it is well-defined. Thus, 
\begin{align*}
\Delta_*((\extd \bt_1) \bt_2)=&(\extd \bt_1)\bt_2 \tens \bt_1\bt_2 + \bt_1\bt_2 \tens  (\extd\bt_1)\bt_2 = R_{21}\bt_2 \extd \bt_1 R \tens \bt_1\bt_2 + \bt_1 \bt_2 \tens R_{21}\bt_2\extd \bt_1 R\\
=&R_{21}\bt_2\extd\bt_1 \tens \bt_2 \bt_1 R + R_{21}\bt_2\bt_1 \tens \bt_2 \extd \bt_1 R = \Delta_*(R_{21}\bt_2 \extd \bt_1 R)
\end{align*}
on $\Omega^1(A(R))$. Since $\Omega(A(R))$ is the maximal prolongation, we do not in principle need to
check the relations in higher degrees due to arguments similar to the proof of Lemma~\ref{maxDeltaR}. 
In practice, however, we check the degree 2 relations explicitly. Thus
\begin{align*}
\Delta_*(-\extd\bt_1&\extd\bt_2)= -\extd\bt_1\extd\bt_2\tens \bt_1\bt_2 - (\extd\bt_1)\bt_2 \tens \bt_1\extd\bt_2 + \bt_1\extd\bt_2 \tens(\extd\bt_1)\bt_2 - \bt_1\bt_2\tens\extd\bt_1\extd\bt_2\\
=&R_{21}\extd\bt_2\extd\bt_1 R\tens \bt_1\bt_2 -R_{21}\bt_2\extd\bt_1 R\tens \bt_1\extd\bt_2 + \bt_1\extd\bt_2 \tens R_{21}\bt_2\extd\bt_1 R+\bt_1\bt_2\tens R_{21}\extd\bt_2\extd\bt_1 R\\
=&R_{21}\extd\bt_2\extd\bt_1 \tens \bt_2\bt_1 R -R_{21}\bt_2\extd\bt_1 R\tens \bt_1\extd\bt_2 + \bt_1\extd\bt_2 \tens R_{21}\bt_2\extd\bt_1 R+R_{21}\bt_2\bt_1\tens\extd\bt_2\extd\bt_1 R
\end{align*}
\begin{align*}
\Delta_*(R_{21}\extd&\bt_2\extd\bt_1 R)= R_{21}\big(\extd\bt_2\extd\bt_1\tens \bt_2\bt_1  + \extd\bt_2.\bt_1 \tens \bt_2\extd\bt_1  -\bt_2\extd\bt_1\tens \extd\bt_2.\bt_1 + \bt_2\bt_1\tens \extd\bt_2\extd\bt_1\big) R\\
=& R_{21}\extd\bt_2\extd\bt_1\tens \bt_2\bt_1 R + R_{21}R\bt_1\extd\bt_2 R_{21}\tens \bt_2\extd\bt_1 R - R_{21}\bt_2\extd\bt_1\tens R\bt_1\extd\bt_2 R_{21}R\\
&+R_{21}\bt_2\bt_1\tens \extd\bt_2\extd\bt_1R\\
=&R_{21}\extd\bt_2\extd\bt_1\tens \bt_2\bt_1 R + (\id + (q-q^{-1})PR)\bt_1\extd\bt_2 R_{21}\tens \bt_2\extd\bt_1 R\\
&-R_{21}\bt_2\extd\bt_1\tens R\bt_1\extd\bt_2(\id+(q-q^{-1})PR)+R_{21}\bt_2\bt_1\tens \extd\bt_2\extd\bt_1 R\\
=& R_{21}\extd\bt_2\extd\bt_1\tens \bt_2\bt_1 R +\bt_1\extd\bt_2\tens R_{21}\bt_2\extd\bt_1R + (q-q^{-1})PR\bt_1\extd\bt_2R_{21}\tens \bt_2\extd\bt_1 R\\
&-R_{21}\bt_2\extd\bt_1R \tens \bt_1\extd\bt_2 -(q-q^{-1})R_{21}\bt_2\extd\bt_1  \tens R\bt_1\extd\bt_2 PR +R_{21}\bt_2\bt_1\tens \extd\bt_2\extd\bt_1 R. 
\end{align*}
The two expressions are equal since 
\begin{align*}
PR\bt_1\extd\bt_2 R_{21}\tens \bt_2\extd\bt_1 R =&R_{21}P\bt_1\extd\bt_2\tens \extd\bt_1.\bt_2 = R_{21}\bt_2\extd\bt_1\tens P\extd\bt_1.\bt_2 = R_{21}\bt_2\extd\bt_1\tens \extd\bt_2.\bt_1 P 
\\=& R_{21}\bt_2\extd\bt_1\tens R\bt_1\extd\bt_2R_{21}P = R_{21}\bt_2\extd\bt_1\tens R\bt_1\extd\bt_2 PR
\end{align*}
so that $(q-q^{-1})(PR\bt_1\extd\bt_2 R_{21}\tens \bt_2\extd \bt_1 R - R_{21}\bt_2\extd\bt_1  \tens R\bt_1\extd\bt_2 PR)$ vanishes. \end{proof}

Now consider the additive braided-Hopf algebra $V(R)$ on which $A(R)$ right coacts. This is 
the $q$-Hecke case of the general construction of `braided linear spaces' in \cite[Prop.~10.2.8]{Ma:book} and is generated by
$\bx = (x_i)$  regarded as a vector row with relations $q \bx_1 \bx_2 = \bx_2 \bx_1 R$, coaction $\Delta_R \bx = \bx\tens \bt$ and coproduct $\underline{\Delta}\bx=\bx\tens 1+1\tens\bx$, or in a compact notation  $\bx'' = \bx' + \bx$ where $\bx'$ is a second copy of $\bx$ and $\bx''$ obeys the relation of $V(R)$ provided $\bx'_1 \bx_2 = \bx_2 \bx'_1 qR$. This expresses the braided Hopf algebra homomorphism property of the coproduct with respect to the relevant braiding $\Psi(\bx_1\tens\bx_2)= \bx_2\tens \bx_1 qR$ induced by the coquasitriangular structure $\CR(\bt_1\tens\bt_2)=qR$, see \cite[Thm.~10.2.6]{Ma:book} for details.  As before, the additive braided-Hopf algebra theory implies an exterior algebra $\Omega(V(R))$ given at the end of \cite[Chapter 10.4]{Ma:book} with the relations shown in the next lemma.  A more formal treatment of the exterior algebras on an additive braided Hopf algebra, which underlies both $\Omega(A(R))$ and $\Omega(V(R))$, recently appeared in \cite[Prop.~2.9]{Ma:hod}. The new part now is that $\Delta_R$ is differentiable. 

\begin{lemma}\label{diff coact A(R) on V(R)} Let $V(R)$ be the right $A(R)$-covariant braided Hopf algebra as above with $R$ $q$-Hecke. The exterior algebra $\Omega(V(R))$ with bimodule and exterior algebra relations $(\extd \bx_1) \bx_2=\bx_2 \extd \bx_1 qR$ and $-\extd \bx_1 \extd \bx_2 = \extd \bx_2\extd \bx_1 qR$ as in \cite{Ma:book} has differentiable right coaction with $\Delta_{R*} \extd \bx = \extd \bx \tens \bt + \bx \tens \extd \bt$.
\end{lemma}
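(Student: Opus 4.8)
The plan is to establish that $\Delta_{R*}\extd\bx=\extd\bx\tens\bt+\bx\tens\extd\bt$ extends the coaction $\Delta_R\bx=\bx\tens\bt$ differentiably, and the cleanest route is to invoke Lemma~\ref{maxDeltaR}. That lemma tells us that since $\Omega(V(R))$ is the maximal prolongation of its degree-one part (the relations $\extd\bx_1\extd\bx_2=-\extd\bx_2\extd\bx_1 qR$ being exactly the consequence of applying $\extd$ to the bimodule relations, with nothing new in higher degree), it suffices to check that the proposed $\Delta_{R*}$ is \emph{well-defined} as a map $\Omega^1(V(R))\to(\Omega^1(V(R))\tens A(R))\oplus(V(R)\tens\Omega^1(A(R)))$. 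So the task reduces to verifying that $\Delta_{R*}$ respects the single defining bimodule relation $(\extd\bx_1)\bx_2=\bx_2\extd\bx_1\,qR$ in degree one.

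First I would compute $\Delta_{R*}$ on both sides of this relation using the formula (\ref{diff coact1}), namely $\Delta_{R*}(b\extd c)=b^{\barnot}\extd c^{\barnot}\tens b^{\baro}c^{\baro}+b^{\barnot}c^{\barnot}\tens b^{\baro}\extd c^{\baro}$, reading off the coaction components from $\Delta_R\bx=\bx\tens\bt$ (so $\bx^{\barnot}=\bx$, $\bx^{\baro}=\bt$) and the differential component $\delta_R(\bx\extd\bx)$ which lands in $V(R)\tens\Omega^1(A(R))$ via $\Delta_{R*}\extd\bx=\extd\bx\tens\bt+\bx\tens\extd\bt$. Applying this to $(\extd\bx_1)\bx_2$ gives a sum with terms in $\Omega^1(V(R))\tens A(R)$ of the form $(\extd\bx_1)\bx_2\tens\bt_1\bt_2$ together with a $V(R)\tens\Omega^1(A(R))$ term $\bx_1\bx_2\tens(\extd\bt_1)\bt_2$. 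Applying it to the right-hand side $\bx_2\extd\bx_1\,qR$ produces $\bx_2(\extd\bx_1)\tens\bt_2\bt_1\,qR$ plus $\bx_2\bx_1\tens\bt_2(\extd\bt_1)\,qR$. The check then splits into two independent pieces according to the two summands of the codomain.

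The $\Omega^1(V(R))\tens A(R)$ piece requires $(\extd\bx_1)\bx_2\tens\bt_1\bt_2=\bx_2(\extd\bx_1)\tens\bt_2\bt_1\,qR$, which I would verify by substituting the bimodule relation $(\extd\bx_1)\bx_2=\bx_2\extd\bx_1\,qR$ on the left factor and the FRT relation $\bt_1\bt_2=R^{-1}\bt_2\bt_1 R$ (or its $q$-Hecke rearrangement) on the right factor, so that the two $qR$ and $R$-conjugations match up. The $V(R)\tens\Omega^1(A(R))$ piece requires $\bx_1\bx_2\tens(\extd\bt_1)\bt_2=\bx_2\bx_1\tens\bt_2(\extd\bt_1)\,qR$, which uses the $V(R)$ relation $q\bx_1\bx_2=\bx_2\bx_1 R$ on the left factor together with the $A(R)$ bimodule relation $(\extd\bt_1)\bt_2=R_{21}\bt_2\extd\bt_1 R$ from Lemma~\ref{FRTcalc} on the right factor; here the interplay of $R$, $R_{21}$ and the $q$-Hecke identity $R_{21}R=\id+(q-q^{-1})PR$ must reconcile the two sides.

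The main obstacle I anticipate is the bookkeeping in this second piece: matching the mixed $R$ and $R_{21}$ conjugations coming from $\Omega(A(R))$ against the single $R$ relation of $V(R)$, so that the proposed braiding-compatible form $\Delta_{R*}\extd\bx=\extd\bx\tens\bt+\bx\tens\extd\bt$ is exactly the one making both terms consistent. This is where the specific coquasitriangular normalisation $\CR(\bt_1\tens\bt_2)=qR$ and the corresponding braiding $\Psi(\bx_1\tens\bx_2)=\bx_2\tens\bx_1\,qR$ enter, and I expect it to go through precisely because $\Delta_R$ is a braided-Hopf algebra coaction compatible with these structures; the $q$-factors and the $q$-Hecke relation are what align the $A(R)$ and $V(R)$ conjugations. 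Once both pieces are confirmed, Lemma~\ref{maxDeltaR} immediately promotes well-definedness in degree one to differentiability in all degrees, completing the proof.
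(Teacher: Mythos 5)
Your proposal is correct and is essentially the paper's own proof: the paper likewise reduces to degree one via Lemma~\ref{maxDeltaR} (noting $\Omega(V(R))$ is the maximal prolongation) and verifies well-definedness on the single bimodule relation, with the $\Omega^1(V(R))\tens A(R)$ part handled by the $V(R)$ calculus relation plus the FRT relation, and the $V(R)\tens\Omega^1(A(R))$ part by the relation of Lemma~\ref{FRTcalc} together with the $V(R)$ algebra relation. The only minor inaccuracy is your anticipated obstacle: the second piece closes without invoking the $q$-Hecke identity, since moving $R_{21}$ to the first tensor factor and using $\bx_1\bx_2 R_{21}=q\bx_2\bx_1$ (a relabelling of positions in $q\bx_1\bx_2=\bx_2\bx_1 R$) immediately yields $\bx_2\bx_1\tens\bt_2\extd\bt_1\, qR$; the Hecke condition only enters the paper's optional explicit degree-two check.
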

\begin{proof}
That $\Delta_{R*}$ is well-defined on degree 1 is
\begin{align*}
\Delta_{R*}((\extd \bx_1) \bx_2) =&(\extd \bx_1) \bx_2 \tens \bt_1\bt_2 + \bx_1\bx_2 \tens (\extd \bt_1)\bt_2= \bx_2\extd\bx_1 qR\tens \bt_1\bt_2 + \bx_1\bx_2 \tens R_{21}\bt_2\extd \bt_1 R\\
=& \bx_2\extd \bx_1 \tens qR\bt_1\bt_2 + \bx_1\bx_2 R_{21} \tens \bt_2\extd \bt_1 R= \bx_2 \extd \bx_1 \tens \bt_2\bt_1 qR + \bx_2\bx_1 \tens \bt_2\extd\bt_1 qR\\
=& \Delta_{R*}(\bx_2 \extd \bx_1 qR).
\end{align*}
This is sufficient by Lemma~\ref{maxDeltaR} since $\Omega(V(R))$ is the maximal prolongation of $\Omega^1(V(R))$. If one wants to see it explicitly on degree 2, this is
\begin{align*}
\Delta_{R*}(-\extd \bx_1&\extd \bx_2)=-\extd \bx_1 \extd \bx_2 \tens \bt_1 \bt_2 - (\extd \bx_1) \bx_2 \tens \bt_1 \extd \bt_2 + \bx_1 \extd \bx_2 \tens (\extd \bt_1)\bt_2 - \bx_1\bx_2 \tens \extd \bt_1\extd \bt_2\\
=& \extd \bx_2 \extd \bx_1 qR \tens \bt_1 \bt_2 - q\bx_2 \extd\bx_1 R \tens \bt_1\extd\bt_2+\bx_1\extd\bx_2\tens R_{21}\bt_2\extd\bt_1 R + \bx_1\bx_2 \tens R_{21}\extd \bt_2 \extd \bt_1 R\\
=&\extd \bx_2 \extd \bx_1  \tens \bt_2 \bt_1 qR - q\bx_2 \extd\bx_1 R \tens \bt_1\extd\bt_2+\bx_1\extd\bx_2\tens R_{21}\bt_2\extd\bt_1 R + \bx_2\bx_1 \tens\extd \bt_2 \extd \bt_1 qR.
\end{align*}
\begin{align*}
\Delta_{R*}(\bx_2 &\extd \bx_1 qR)= (\extd \bx_2 \extd \bx_1 \tens \bt_2 \bt_1 + \extd \bx_2.\bx_1 \tens \bt_2 \extd \bt_1 - \bx_2 \extd \bx_1 \tens (\extd \bt_2)\bt_1 + \bx_2 \bx_1 \tens \extd \bt_2 \extd \bt_1)qR\\
=&\extd \bx_2 \extd \bx_1 \tens \bt_2 \bt_1 qR + q^2\bx_1 \extd \bx_2\tens R_{21}\bt_2\extd\bt_1 R - q\bx_2\extd \bx_1 \tens R\bt_1 \extd \bt_2 R_{21}R+ \bx_2 \bx_1 \tens \extd \bt_2 \extd \bt_1 qR\\
=&\extd \bx_2 \extd \bx_1 \tens \bt_2 \bt_1 qR + q^2\bx_1 \extd \bx_2\tens R_{21}\bt_2\extd\bt_1 R -q \bx_2 \extd \bx_1 \tens R\bt_1\extd \bt_2 (\id + (q-q^{-1})PR)\\
&+ \bx_2 \bx_1 \tens \extd \bt_2 \extd \bt_1 qR\\
=&\extd \bx_2 \extd \bx_1 \tens \bt_2 \bt_1 qR + q^2\bx_1 \extd \bx_2\tens R_{21}\bt_2\extd\bt_1 R -q \bx_2 \extd \bx_1 \tens R\bt_1\extd \bt_2 + \bx_2 \bx_1 \tens \extd \bt_2 \extd \bt_1 qR \\
&-(q^2-1)\bx_2\extd \bx_1 PR_{21} \tens\bt_2 \extd\bt_1 R
\end{align*}
which simplifies to the first expression. 
\end{proof}

We will suppose later that $A(R)$ has a central grouplike element $D$ such that $A=A(R)[D^{-1}]$ is a Hopf algebra. For the standard $\C_q[GL_n]$ $R$-matrix, this is just the $q$-determinant allowing $S\bt$ to be constructed as $D^{-1}$ times a $q$-matrix of cofactors. (Another approach, which we will not take, is to assume that $R$ is `bi-invertible' and define a Hopf algebra by reconstruction from a rigid braided category defined by $R$.) It is easy to see that the above $\Omega(A(R))$ extends to a strongly bicovariant exterior algebra $\Omega(A)$ with $\extd D^{-1}=-D^{-1}(\extd D)D^{-1}$. 


\section{Differentials on double (co)cross products}\label{secdouble}

Here we briefly cover the quantum group construction $\dcross$ from \cite{Ma:phy} and its dual $\codcross$ in the notation of \cite{Ma:book}. The former includes the Drinfeld double of a Hopf as generalised in \cite{Ma:phy} to the case of dually paired Hopf algebras.  

\subsection{Exterior algebras by super double cross product}

Let $A, H$ be bialgebras or Hopf algebras with $H$ a right $A$-module coalgebra by $\ra : H \tens A \to H$ and $A$ a left $H$-module coalgebra by $\la : H\tens A \to A$. We suppose that $\ra$ and $\la$ are compatible as in \cite{Ma:phy, Ma:book} such that they form a double cross product Hopf algebra $A\dcross H$.

Now let $\Omega(A)$ and $\Omega(H)$ be strongly bicovariant exterior algebras, and let $\ra$ and $\la$ extend to $\ra : \Omega(H)\underline{\tens}\Omega(A)\to \Omega(H)$ and $\la : \Omega(H)\underline{\tens}\Omega(A)\to \Omega(A)$ as module coalgebras such that
\begin{align}
\extd_H(\eta \ra \omega) =& (\extd_H \eta) \ra \omega + (-1)^{|\eta|}\eta \ra \extd_A \omega\label{d_H dcross},\\
\extd_A(\eta \la \omega)=&(\extd_H \eta)\la \omega + (-1)^{|\eta|}\eta \la \extd_A \omega\label{d_A dcross}
\end{align}
for all $\eta \in \Omega(H)$ and $\omega \in \Omega(A)$. If $\ra$ and $\la$ obey the super double cross product conditions
\begin{align}
1_H\ra \omega = & 1_H\epsilon(\omega), \quad  \eta \la 1_A =  1_A\epsilon(\eta) \label{dcross1}\\
(\eta \xi)\ra \omega = & (-1)^{|\omega\o||\xi\t|} (\eta \ra (\xi\o \la \omega\o))(\xi\t \ra \omega\t)\label{dcross2} \\
\eta \la (\omega \tau) = &(-1)^{|\omega\o||\eta\t|}(\eta\o \la \omega\o)((\eta\t\ra \omega\t)\la \tau) \label{dcross3}\\
(-1)^{|\omega\o||\eta\t|}&\eta\o \ra \omega\o \tens \eta\t \la \omega\t \label{dcross4}\\
=& (-1)^{|\eta\o|(|\eta\t|+|\omega\t|)+|\omega\o||\omega\t|}\eta\t \ra \omega\t \tens \eta\o \la \omega\o \notag 
\end{align}
then we have a super double cross product bialgebra or Hopf algebra $\Omega(A)\dcross \Omega(H)$ with super tensor product coalgebra and product
\[(\omega\tens \eta)(\tau \tens \xi) = (-1)^{|\eta\t||\tau\o|}\omega(\eta\o \la \tau\o)\tens (\eta\t \ra \tau\t)\xi\]
for all $\eta,\xi \in \Omega(H)$ and $\omega,\tau \in \Omega(A)$. We omit the proof that $\Omega(A)\dcross\Omega(H)$ is a super Hopf algebra since this is similar to the usual version\cite{Ma:phy,Ma:book} with extra signs.

\begin{theorem}\label{thm super dcross}
Let $A,H$ be bialgebras or Hopf algebras forming a double cross product $A\dcross H$ and let $\Omega(A)$ and $\Omega(H)$ be strongly bicovariant with $\ra, \la$ obeying (\ref{d_H dcross})-(\ref{dcross4}). Then $\Omega(A\dcross H):= \Omega(A)\dcross \Omega(H)$ is a strongly bicovariant exterior algebra on $A\dcross H$ with differential
\begin{align*}
\extd(\omega\tens \eta)= \extd_A \omega \tens \eta + (-1)^{|\omega|}\omega\tens \extd_H \eta
\end{align*} 
for all $\omega\in \Omega(A)$ and $\eta\in \Omega(H)$.
\end{theorem}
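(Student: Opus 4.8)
The plan is to verify the three defining properties of a strongly bicovariant exterior algebra for the object $\Omega(A)\dcross\Omega(H)$, treating the super double cross product structure (product, coproduct, counit, antipode) as already established, exactly as the preceding discussion assures us. So the genuinely new content is confined to checking that the proposed $\extd$ is (i) a well-defined graded derivation with $\extd^2=0$, (ii) that it generates the whole algebra from degree zero and degree one, and (iii) that it satisfies the super-coderivation property \eqref{super-coderivation} with respect to the super double cross product coproduct.

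First I would handle $\extd^2=0$ and the graded Leibniz rule. Since $\extd$ is defined componentwise via $\extd_A$ and $\extd_H$ with the standard Koszul sign, $\extd^2=0$ follows immediately from $\extd_A^2=\extd_H^2=0$. For the graded Leibniz rule one must compute $\extd$ on a product $(\omega\tens\eta)(\tau\tens\xi)$; because the product intertwines the two factors through $\la$ and $\ra$, this is where the compatibility equations \eqref{d_H dcross} and \eqref{d_A dcross} are needed: they are precisely the statements that $\extd_A$ and $\extd_H$ are graded derivations of the cross actions. I expect this to reduce, after inserting the coproduct formula $\Delta_*(\extd_A\tau)$ and $\Delta_*(\extd_H\eta)$, to the graded Leibniz rule on each factor plus bookkeeping of signs. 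The generation property (ii) is automatic: $\Omega(A)$ and $\Omega(H)$ are each generated in degrees $0,1$, so their (super) cross product is too.

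The main obstacle will be the super-coderivation property (iii). Here one must show $\Delta_*\extd=(\extd\tens\id+(-1)^{|\ |}\id\tens\extd)\Delta_*$ on a general element $\omega\tens\eta$, where $\Delta_*$ is the super tensor coproduct of the double cross product (itself a super tensor coproduct with the crossing sign $(-1)^{|\eta\t||\tau\o|}$-type factors). I would reduce to checking it on the algebra generators, i.e.\ on $\omega\tens 1$ and $1\tens\eta$ separately, exploiting that both sides are derivations/coderivations and hence it suffices to verify on generators whose products generate everything. On $\omega\tens 1$ the identity collapses to the super-coderivation property of $\Omega(A)$, and on $1\tens\eta$ to that of $\Omega(H)$, each of which holds by the strong bicovariance hypothesis on the factors. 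The delicate point is that the super double cross coproduct is \emph{not} simply the super tensor coproduct: the crossing signs and, more importantly, the interaction encoded in \eqref{dcross4} must be tracked so that the reduction to the two factors is legitimate; verifying compatibility of $\extd$ with those crossing signs is the real work.

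Finally, to obtain the bosonisation statement and to confirm bicovariance, I would invoke the general result quoted earlier that any strongly bicovariant exterior algebra is automatically bicovariant and isomorphic to a super-bosonisation $A\rbiprod\Lambda$, so no separate argument is needed once (i)--(iii) are in place. In practice, rather than verifying \eqref{super-coderivation} in full generality, I would note that since $\Omega(A\dcross H)$ is the super double cross product of two maximal prolongations it is itself a maximal prolongation of its degree-one part, so by an argument parallel to Lemma~\ref{maxDeltaR} it suffices to check the coderivation property on degree one together with well-definedness of $\Delta_*$ on the degree-one relations; this is the economical route and avoids grinding through all higher degrees.
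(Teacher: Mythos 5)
Your overall skeleton matches the paper's proof: the graded Leibniz rule is checked on cross terms using \eqref{d_H dcross}--\eqref{d_A dcross} together with strong bicovariance of the factors, $\extd^2=0$ is immediate, and the coderivation property of the total $\extd$ reduces to the factors. However, you have located the difficulty exactly backwards, because you have misread the structure of $\Omega(A)\dcross\Omega(H)$: in a double cross \emph{product} the twisting by $\la,\ra$ and the compatibility \eqref{dcross4} live entirely in the \emph{product}, while the coalgebra is precisely the super tensor product coalgebra (this is stated just before the theorem). Consequently the super-coderivation property is not ``the real work'' and involves no tracking of \eqref{dcross4} at all: since $\Delta_*$ and $\extd$ are both defined componentwise on $\Omega(A)\tens\Omega(H)$, the verification is word-for-word the one already done in Lemma~\ref{lemma diff tens prod}, which is exactly how the paper concludes. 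What you describe would be relevant for the double cross \emph{coproduct} $\codcross$ (Theorem~\ref{thm super codcross}), where it is indeed the coproduct that is twisted and the algebra that is the plain super tensor product; you appear to have conflated the two constructions. The genuinely nontrivial step is the one you dismiss as ``bookkeeping of signs'': the Leibniz rule on cross terms such as $(1\tens\eta)(\omega\tens 1)$, which is where \eqref{d_H dcross}, \eqref{d_A dcross} and the coderivation property of the factors (needed to recognize $\Delta_*\extd_H\eta$ and $\Delta_*\extd_A\omega$ inside the cross product formula) all enter.

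Second, your preferred ``economical route'' in the final paragraph is a genuine gap. Nothing in the hypotheses says that $\Omega(A)$ or $\Omega(H)$ is the maximal prolongation of its degree-one part --- strong bicovariance does not imply this --- so the claim that $\Omega(A)\dcross\Omega(H)$ is itself a maximal prolongation is unsupported, and an argument parallel to Lemma~\ref{maxDeltaR} cannot be invoked. Fortunately the shortcut is also unnecessary: once the Leibniz rule is established, your reduction of the coderivation identity to the generators $\omega\tens 1$ and $1\tens\eta$ is legitimate, since $\Delta_*\extd$ and $(\extd\tens\id+(-1)^{|\ |}\id\tens\extd)\Delta_*$ are both graded derivations along the algebra map $\Delta_*$, and on generators the identity is just strong bicovariance of each factor; or, more simply still, one checks it directly on all of $\Omega(A)\tens\Omega(H)$ exactly as in Lemma~\ref{lemma diff tens prod}.
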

\begin{proof} 

Since $\extd(\omega\tens 1)=\extd_A \omega$ and $\extd (1\tens \eta)= \extd_H \eta$ for all $\eta \in \Omega(H)$ and $\omega \in \Omega(A)$, we show that the graded Leibniz rule holds as  
\begin{align*}
\extd(\eta\omega)=& \extd((1\tens \eta)(\omega\tens 1))\\
=&(-1)^{|\eta\t||\omega\o|}\big((\extd_H\eta\o)\la \omega\o \tens \eta\t \ra \omega\t + (-1)^{|\eta\o|+|\omega\o|} \eta\o \la\omega\o \tens (\extd_H\eta\t)\ra \omega\t \big)\\
&+(-1)^{|\eta\t||\omega\o| +|\eta\o|}\big(\eta\o \la \extd_A \omega\o \tens \eta\t \ra \omega\t + (-1)^{|\eta\t|+|\omega\o|} \eta\o \la \omega\o \tens \eta\t \ra \extd_A \omega\t \big)\\
=& (-1)^{|(\extd_H \eta)\t||\omega\o|} (\extd_H \eta)\o \la \tens \omega\o \tens (\extd_H \eta)\t \ra \omega\t \\
&+ (-1)^{|\eta|+|\eta\t||(\extd_A\omega)\o|} \eta\o \la (\extd_A \omega)\o \tens \eta\t \ra (\extd_A \omega)\t\\
=&(1\tens \extd_H \eta)(\omega\tens 1)+(-1)^{|\eta|}(1\tens \eta)(\extd_A \omega\tens 1) = (\extd \eta)\omega + (-1)^{|\eta|}\eta\extd\omega.
\end{align*}
Clearly $\extd^2=0$ and thus $\Omega(A)\dcross \Omega(H)$ is a DGA. Finally since $\Delta_*$ is a super tensor coproduct as in Lemma \ref{lemma diff tens prod}, $\extd$ is a super-coderivation.
\end{proof}

\begin{remark}
If $A$ is finite dimensional, it is known cf.\cite{Ma:book} that $A\dcross H$ acts on $A^*$ as a module algebra by
\[(\phi\ra h)(a) = \phi(h\la a), \quad \phi \ra a = \langle \phi\o, a \rangle \phi\t,\]
for all $\phi \in A^*$, $a\in A$, and $h\in H$. Similarly for a left action on $H^*$. However, for differentiability, we would need $\Omega(A^*)$ or $\Omega(H^*)$ to be specified. We focus in the next section on a special case where this is not a problem. 
\end{remark}

\subsection{Exterior algebra on generalised quantum doubles $D(A,H)$}
Let $A, H$ be dually paired Hopf algebras with duality pairing $\langle \ , \ \rangle : H\tens A \to k$. Then, $A$ acts on $H$ and $H$ acts on $A$ by the following actions
\[h \ra a = h\t \langle Sh\o, a\o\rangle \langle h\th, a\t \rangle, \quad h\la a = a\t \langle Sh\o, a\o \rangle \langle h\t , a\th \rangle,\]
and one has a generalised quantum double $D(A,H) = A^{\op}\dcross H$ \cite{Ma:phy,Ma:book}. In this case, the product becomes $(a\tens h)(b\tens g) = \langle Sh\o, b\o \rangle b\t a \tens h\t g \langle h\th, b\th \rangle$, for all $a,b\in A$ and $h,g\in H$. The finite-dimensional case with $A=H^*$ is Drinfeld's $D(H)$ from \cite{Dri}.

Note that if $\Omega(A)$ is a DGA, $\Omega(A)^{\op}$ remains a DGA with the same differential $\extd_A$ and we define $\Omega(A^{\op}):=\Omega(A)^{\op}$. Now let $\Omega(H), \Omega(A)$ be strongly bicovariant exterior algebras and note that the above pairing can be extended to a super Hopf algebra pairing $\langle \ , \ \rangle : \Omega(H)\tens \Omega(A)\to k$ by 0 for degree $\geq 1$. So we have
\begin{align*}
\eta \ra \omega =&\begin{cases} \eta\t \langle S\eta\o, \omega\o \rangle \langle \eta\th, \omega\t \rangle &\text{if $\omega \in A$}\\
0 &\text{otherwise},
\end{cases}\\
\eta \la \omega =&\begin{cases} \omega\t \langle S\eta\o, \omega\o \rangle \langle \eta\t ,\omega\th \rangle &\text{if $\eta \in H$}\\
0 &\text{otherwise}
\end{cases}
\end{align*}
for $\omega\in \Omega(A)$ and $\eta\in \Omega(A)$, where only the parts of $\Delta_*^2 \eta$ in $H \tens \Omega(H) \tens H$ and $\Delta_*^2 \omega$ in $ A\tens \Omega(A)\tens A$ contribute in the respective actions. 

One can check that the above actions obey conditions $(\ref{d_H dcross})-(\ref{dcross4})$ for a super double cross product. Therefore, there is a super double cross product $\Omega(A)^{\op}\dcross \Omega(H)$ with product
\[(\omega \tens \eta)(\tau \tens \xi) = (-1)^{(|\eta|+|\omega|)|\tau|}\langle S\eta\o, \tau\o \rangle \tau\t\omega \tens \eta\t \xi \langle \eta\th, \tau\th \rangle\]
for all $\omega, \tau \in \Omega(A)$, $\eta, \xi \in \Omega(H)$ and super tensor product coalgebra. Note that in the above product, $\tau\t$ and $\omega$ are crossed with $\eta\t$ and  so we should have generated a factor $(-1)^{(|\eta\t|+|\omega|)|\tau\t|}$ but $\langle \ , \ \rangle$ is 0 on degree $\geq 1$, and only the parts of  $\Delta_*^2 \eta$ in $H\tens \Omega(H)\tens H$ and  $\Delta_*^2 \tau$ in $A \tens \Omega(A)\tens A$ contribute, so $|\eta\t|=|\eta|$ and $|\tau\t|=|\tau|$.

\begin{proposition}\label{super D(H)}
Let $A,H$ be dually paired Hopf algebras and $D(A,H)=A^{\op}\dcross H$. Let $\Omega(A)$ and $\Omega(H)$ be dually paired strongly bicovariant exterior algebras as above. Then $ \Omega(D(A,H)):=\Omega(A)^{\op}\dcross \Omega(H)$ is a strongly bicovariant exterior algebra on the generalised quantum double $D(A,H)$ with differential
\[\extd(\omega\tens \eta) = \extd_{A} \omega\tens \eta + (-1)^{|\omega|}\omega\tens \extd_H \eta\]
for all $\eta\in \Omega(H)$ and $\omega\in \Omega(A)$. Moreover, the action of $D(A,H)$ on $H$ defined by $g\ra (a\tens h) = \langle g\t, a \rangle (Sh\o)g\o h\t$ for $g,h\in H$, $a\in A$ is differentiable and extends to an action of $\Omega(D(A,H))$ on $\Omega(H)$ by
\begin{align*}
\xi \ra (\omega \tens \eta) =& \begin{cases}(-1)^{|\eta\o||\xi|}\langle \xi\t, \omega \rangle (S\eta\o)\xi\o \eta\t & \text{if $\omega \in A$ }\\
0 & {otherwise}\end{cases}
\end{align*}
for $\xi,\eta\in \Omega(H)$, where only the part of $\Delta_* \xi$ in $\Omega(H)\tens H$ contributes. Explicitly
\[\xi \ra (a\tens \eta) = (-1)^{|\eta\o||\xi|} \langle \xi^{\baro}, a \rangle (S\eta\o)\xi^{\barnot}\eta\t,\]
where $\Delta_R \xi = \xi^{\barnot}\tens \xi^{\baro}$ is the right coaction of $H$ on $\Omega(H)$. Similarly with left-right reversal for 
a differentiable  left action of $D(A,H)$ on $A$. 
\end{proposition}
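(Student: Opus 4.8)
The plan is to treat the three assertions in turn, leaning on Theorem~\ref{thm super dcross} for the first. For the claim that $\Omega(D(A,H))$ is strongly bicovariant, I would first record that the pairing $\langle\ ,\ \rangle:\Omega(H)\tens\Omega(A)\to k$, extended by zero in positive degree, is a super Hopf algebra pairing, and then check that the induced $\ra,\la$ are super module coalgebra maps satisfying the super double cross product conditions (\ref{dcross1})--(\ref{dcross4}) together with the differential Leibniz conditions (\ref{d_H dcross})--(\ref{d_A dcross}). Conditions (\ref{dcross1})--(\ref{dcross4}) are the super analogues of the standard identities making $A^{\op}\dcross H$ a double cross product for dually paired Hopf algebras, and follow from the pairing axioms with the extra Koszul signs; because the pairing vanishes on forms of positive degree, most terms collapse and only the $H\tens\Omega(H)\tens H$ and $A\tens\Omega(A)\tens A$ parts of the iterated coproducts contribute. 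For (\ref{d_H dcross})--(\ref{d_A dcross}) I would use that $\eta\ra\omega$ is nonzero only for $\omega\in A$, so $\eta\ra\extd_A\omega=0$ and the Leibniz condition reduces to $\extd_H(\eta\ra\omega)=(\extd_H\eta)\ra\omega$, verified from the super-coderivation property of $\Delta_*$ on $\Omega(H)$ and the graded compatibility of the antipode with $\extd_H$. Theorem~\ref{thm super dcross} then delivers the strongly bicovariant exterior algebra with the stated differential.

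For the differentiable action on $H$, I would first confirm that the stated formula genuinely extends the degree-zero action: at $\xi=g,\ \eta=h\in H$ the coaction $\Delta_R$ becomes the coproduct of $H$ and the formula reduces to $g\ra(a\tens h)=\langle g\t,a\rangle(Sh\o)g\o h\t$. The extension is assembled entirely from differentiable or graded ingredients on $\Omega(H)$: the right coaction $\Delta_R:\Omega(H)\to\Omega(H)\tens H$, the extended pairing, the antipode, the coproduct $\Delta_*$, and the graded product. I would then verify that $\Omega(H)$ becomes a super right $\Omega(D(A,H))$-module algebra by checking the module axiom $(\xi\ra z)\ra w=\xi\ra(zw)$ and the super module algebra axiom $(\xi\zeta)\ra z=\pm(\xi\ra z\o)(\zeta\ra z\t)$ against the super double cross product and super tensor coproduct structures from the first part; these are formal manipulations of the pairing and coaction requiring only careful sign tracking.

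The substantive step is the differentiability identity (\ref{diff act form}), namely $\extd_H(\xi\ra z)=(\extd_H\xi)\ra z+(-1)^{|\xi|}\xi\ra\extd z$ for $z=\omega\tens\eta$. Since the action vanishes unless $\omega\in A$, the term $\xi\ra(\extd_A a\tens\eta)$ drops out and the identity reduces, for $z=a\tens\eta$, to
\[\extd_H(\xi\ra(a\tens\eta))=(\extd_H\xi)\ra(a\tens\eta)+(-1)^{|\xi|}\xi\ra(a\tens\extd_H\eta).\]
I would expand the left-hand side by the graded Leibniz rule applied to $(S\eta\o)\xi^{\barnot}\eta\t$, producing three terms according to whether $\extd_H$ lands on $S\eta\o$, on $\xi^{\barnot}$, or on $\eta\t$. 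The middle term matches $(\extd_H\xi)\ra(a\tens\eta)$ via right-covariance $\Delta_R\extd_H=(\extd_H\tens\id)\Delta_R$, so that $(\extd_H\xi)^{\barnot}=\extd_H\xi^{\barnot}$ and $(\extd_H\xi)^{\baro}=\xi^{\baro}$, while the remaining two terms reassemble into $(-1)^{|\xi|}\xi\ra(a\tens\extd_H\eta)$ via the super-coderivation expansion $\Delta_*\extd_H\eta=\extd_H\eta\o\tens\eta\t+(-1)^{|\eta\o|}\eta\o\tens\extd_H\eta\t$ together with $\extd_H S=\pm S\extd_H$.

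The main obstacle I anticipate is precisely the alignment of Koszul signs in this last step: the prefactor $(-1)^{|\eta\o||\xi|}$ in the action, the degree shift $|\extd_H\xi|=|\xi|+1$, the sign from commuting $\extd_H$ past $S\eta\o$, and the coderivation signs must all conspire to reproduce the single overall $(-1)^{|\xi|}$. I would manage this by fixing homogeneous components throughout and comparing term by term. Finally, the differentiable left action of $D(A,H)$ on $A$ follows by the symmetric construction under $A\leftrightarrow H$ with left-right reversal, using $\Omega(A)^{\op}$ in place of $\Omega(A)$, so no separate argument is needed. If one prefers, once the first-order extension is shown well defined one may instead invoke Lemma~\ref{diff right action}, provided $\Omega(D(A,H))$ is checked to be the maximal prolongation of its first-order part.
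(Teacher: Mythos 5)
Your proposal is correct and follows essentially the same route as the paper: the first claim is reduced to Theorem~\ref{thm super dcross} after checking the super double cross product conditions for the pairing extended by zero, and differentiability is verified exactly as in the paper by applying the graded Leibniz rule to $(S\eta\o)\xi^{\barnot}\eta\t$, matching the middle term with $(\extd_H\xi)\ra(a\tens\eta)$ via $\Delta_R\extd_H=(\extd_H\tens\id)\Delta_R$ and reassembling the outer terms into $(-1)^{|\xi|}\xi\ra(a\tens\extd_H\eta)$ using the super-coderivation property and the vanishing of $\xi\ra(\extd_A a\tens\eta)$. The concluding left-right reversal for the action on $A$ likewise mirrors the paper, so no gap remains.
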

\begin{proof}
The first part of the proposition follows from Theorem \ref{thm super dcross}. One can check the stated action makes $\Omega(H)$ a super right $\Omega(D(A,H))$-module algebra. Moreover, $\Delta_* \xi = \Delta_R \xi + \text{terms}$  of higher degree on the second factor, giving the explicit formula stated. We also have
\begin{align*}
\extd_H(\xi \ra (a\tens \eta))=& (-1)^{|\eta\o||\xi|} \langle \xi^{\baro},a\rangle \extd_H((S\eta\o)\xi^{\barnot}\eta\t)\\
=&(-1)^{|\eta\o||\xi|} \langle \xi^{\baro}, a \rangle  \Big((\extd_H S\eta\o)\xi^{\barnot} \eta\t  + (-1)^{|\eta\o|}(S\eta\o)(\extd_H \eta^{\barnot})\eta\t \\
&+ (S\eta\o)\xi^{\barnot}\extd_H \eta\t  \Big)\\
=&(\extd_H \xi)\ra a\tens \eta + (-1)^{|\xi|} \xi \ra \extd(a\tens \eta), 
\end{align*}
where in the last equation we have $\xi \ra \extd(a\tens \eta) = \xi \ra (a \tens \extd_H \eta)$ since $\xi \ra (\extd_A a\tens \eta) =0$. The formulae with $A,H$ and left-right swapped are left to the reader. 
\end{proof}

\begin{example}\label{ex D(U(b_+))}
Let $H = U_q(b_+)$ be a self-dual Hopf algebra generated by $x,t$ with relations, comultiplication, and duality pairing
\[tx=q^2xt, \quad \Delta t = t\tens t, \quad \Delta x = 1\tens x + x\tens t\]
\[\langle t,s \rangle = q^{-2}, \quad \langle x,s \rangle = \langle t,y \rangle=0, \quad \langle x,y \rangle= \dfrac{1}{1-q^2}\]
where $y,s$ are another copies of $x,t$, regarded as generators of $A^{\op} = U_q(b_+)^{\op} = U_{q^{-1}}(b_+)$ and $q\in k$ with $q^2\ne 1$.  
Let $\Omega(U_q(b_+))$ be strongly bicovariant (it will be constructed later in Proposition \ref{ExB+}) with the following bimodule relations and comultiplication
\[(\extd t)t = q^2 t\extd t, \quad (\extd x) x= q^2 x\extd x, \quad (\extd x)t = t\extd x, \quad (\extd t) x = q^2 x\extd t+ (q^2-1)t\extd x\]
\[(\extd t)^2 = (\extd x)^2 =0, \quad \extd t\extd x = -\extd x \extd t, \quad \Delta_* \extd t = \extd t \tens t+ t\tens \extd t, \quad \Delta_* \extd x = 1\tens \extd x + \extd x \tens t + x\tens \extd t.\]
Then $\Omega(D(U_q(b_+)))$ contains $\Omega(U_q(b_+))$ and $\Omega(U_{q^{-1}}(b_+))$ as sub-strongly bicovariant exterior algebras, and the following cross-relations
\[ts=st, \quad ty=q^{-2}yt, \quad xs = q^{-2}sx, \quad xy = q^{-2}yx+\dfrac{1-st}{1-q^2}\]
\[(\extd t) s = s\extd t, \quad (\extd s)t = t\extd s, \quad (\extd x)s= q^{-2}s\extd s, \quad (\extd s) x = q^2 x\extd s, \quad (\extd t)y = q^{-2}y\extd t,\]
\[(\extd y)t = q^2t\extd y, \quad (\extd x)y = q^{-2} y\extd x -\dfrac{s\extd t}{1-q^2}, \quad (\extd y) x =q^2 x\extd y +  \dfrac{t \extd s}{q^{-2}-1},\]
\[\extd t \extd s = -\extd s \extd t, \quad \extd t \extd y = - q^{-2}\extd y \extd t, \quad \extd x \extd s = -q^{-2} \extd s \extd x , \quad \extd x \extd y = -q^{-2}\extd y \extd x - \frac{\extd s \extd t}{1-q^2}.\]
Moreover, $D_q(U(b_+))$ acts differentiably on $U_q(b_+)$.
\end{example}
\begin{proof}
One can check that $\langle \ , \ \rangle$ extends by 0 on degree $\geq 1$. Then the stated crossed relations can be found by direct calculation and one can check that the graded Leibniz rule holds, and $\extd$ is a super-coderivation. By Proposition~\ref{super D(H)}, $U_{q^{-1}}(b_+)\dcross U_q(b_+)$ acts differentiably by
\[t\ra t = t, \quad t\ra x = (1-q^{-2})tx, \quad x \ra t = q^{-2}x, \quad x \ra x = (1-q^{-2})x^2,\]
\[(\extd t) \ra t = q^2 \extd t, \quad (\extd t) \ra x = (q^2-1)t\extd x, \quad (\extd x) \ra t = \extd x, \quad (\extd x) \ra x = (q^2-1)x\extd x, \]
\[t\ra \extd t = (1-q^2)\extd t, \quad t\ra \extd x = (q^2-1)x\extd t, \quad x \ra \extd t = (q^{-2}-1)\extd x, \quad x \ra \extd x = (1-q^{-2})x\extd x,\]
\[(\extd t) \ra \extd t = (\extd x)\ra \extd t = (\extd x) \ra \extd x=0, \quad (\extd t)\ra \extd x = (1-q^2)\extd t \extd x,\]
\[t\ra s = q^{-2}t, \quad t \ra y = 0, \quad x \ra s = x, \quad x \ra y = \frac{t}{1-q^2},\]
\[(\extd t)\ra s = q^{-2}\extd t, \quad (\extd t) \ra y = 0, \quad (\extd x) \ra s = \extd x,\quad  (\extd x) \ra y = \frac{\extd t}{1-q^2},\]
\[t\ra \extd s = t \ra \extd y = x \ra \extd s = x \ra \extd y = 0.\]
\end{proof}

\begin{remark}
It is known\cite{Dri} that $D(U_q(b_+))/(st^{-1}-1) \cong U_q(sl_2)$ by
\[x \mapsto x_+ K, \quad y \mapsto x_- K, \quad t \mapsto K^2,\]
where  $U_q(sl_2)$ is generated by $K, x_\pm$ with the  relations
\[KK^{-1} = K^{-1}K, \quad Kx_\pm = q^{\pm 1}x_\pm K, \quad [x_+, x_-] = \dfrac{K^2-K^{-2}}{q-q^{-1}}.\]
However, $\Omega(D(U_q(b_+)))$ in Example \ref{ex D(U(b_+))} above  does not descend to $\Omega(U_q(sl_2))$ since $\extd(st^{-1}) = 0$ gives $\extd s =q^{-2} \extd t$. 
\end{remark}

\begin{example}\label{exDsu2} Here we work over $\C$ due to the physics context. Let  $U(su_2)$ be the enveloping algebra of $su_2$ with generators $x_a$, for $a=1,2,3$ with primitive coproducts and relations $[x_a, x_b] = 2 \lambda \epsilon_{abc}x_c$ where $\lambda$ is purely imaginary and $\epsilon_{abc}$ is the totally antisymmetric tensor with $\eps_{123}=1$. Let $\Omega(U(su_2))$ be a 4D strongly bicovariant exterior algebra with the following bimodule relations\cite{BaMa}
\[[\extd x_a, x_b] = \lambda \epsilon_{abc} \extd x_c - \lambda^2 \delta^a{}_b\theta, \quad [\theta, x_a] = \extd x_a\]
 \[\extd \theta = 0, \quad \{\extd x_a, \extd x_b\} = 0, \quad \{\theta, \extd x_a \} = 0,\]
and primitive coproducts on $\extd x_a$ and $\theta$. Let $\C[SU_2]$ be the commutative Hopf algebra generated as usual by $\bt = (t^i{}_j)$, with determinant $t^1{}_1 t^2{}_2 - t^1{}_2 t^2{}_1 =1$ and $\Delta t^i{}_j = t^i{}_k  \tens t^k{}_j$. Also let $\Omega(\C[SU_2])$ be the classical 3D strongly bicovariant exterior algebra with
\[ [\extd t^i{}_j , t^k{}_l] = 0, \quad \{\extd t^i{}_j , \extd t^k{}_l\} = 0, \quad \extd t^2{}_2 = (t^2{}_2) (t^1{}_2 \extd t^2{}_1 + t^2{}_1 \extd t^1{}_2 - t^2{}_2 \extd t^1{}_1),\]
 \[ \Delta_* \extd t^i{}_j= \extd t^i{}_k \tens t^k{}_j + t^i{}_k \tens \extd t^k{}_j,\]
where $S\bt = \bt^{-1}$ as usual. 

Then $\Omega(\C[SU_2])\lcross \Omega(U(su_2))$ is a strongly bicovariant exterior algebra on $\C[SU_2]\lcross U(su_2)$ and contains $\Omega(\C[SU_2])$ and $\Omega(U(su_2))$ as sub-strongly bicovariant exterior algebras, and the following cross bimodule relations
\[[x_a, t^i{}_j] =  -\imath\lambda(t^i{}_k (\sigma_a)^k{}_j  - (\sigma_a)^i{}_k t^k{}_j), \quad [x_a,dt^i{}_j] = -\imath\lambda(\extd t^i{}_k (\sigma_a)^k{}_j  - (\sigma_a)^i{}_k \extd t^k{}_j)\]
 \[ [\extd x_a, t^i{}_j] = 0, \quad [\theta, t^i{}_j] = 0, \quad \{\extd x_a , \extd t^i{}_j\} =0, \quad \{\theta, \extd t^i{}_j\}=0,\]
where $(\sigma_a)^i{}_j$ is the $(i,j)$-th entry of the standard Pauli matrix $\sigma_a$ for $a=1,2,3$. This is a  $*$-differential calculus with the usual $*$-structure of $U(su_2)$ and $\C[SU_2]$.  Moreover, $\C[SU_2]\lcross U(su_2)$ acts differentiably on $U(su_2)$. 
\end{example}
\begin{proof}
First note that $x_a \ra t = x_a\eps(t)$ for all $t \in \C[SU_2]$, so $\C[SU_2]\dcross U(su_2) = \C[SU_2]\lcross U(su_2)$. The duality pairing between $\C[SU_2]$ and $U(su_2)$ is given by $\langle t^i{}_j, x_a \rangle = -\imath\lambda (\sigma_a)^i{}_j$, and this gives the stated cross relation on degree 0 as found previously in \cite{BaMa}. One can check that $\langle \ , \ \rangle$ extends to the pairing between $\Omega(\C[SU_2])$ and $\Omega(U(su_2))$ by 0 for degree $\geq 1$, giving the rest of the stated crossed relations on $\Omega(\C[SU_2])\lcross \Omega(U(su_2))$. One can also check that the graded Leibniz rule holds and that $\extd$ is a super-coderivation. Each factor is a  $*$-calculus (in the usual sense that $*$ commutes with $\extd$ and is a graded antilinear order-reversing involution), with the usual $*$-structure given by
\[x_a^* = x_a, \quad \theta^* = -\theta , \quad (\extd x_a)^* = \extd x_a, \quad (t^i{}_j)^* = St^j_i, \quad (\extd t^i{}_j)^*  = S\extd t^j{}_i.\]
 One can check that these  result in a $*$-calculus, e.g.
\begin{align*}
[x_a, \extd t^i{}_j]^* =& [(\extd t^i{}_j)^*, x_a^*] = [S\extd t^i{}_j, x_a] = -\imath \lambda((\sigma_a)^j{}_k S\extd t^k{}_i - (S\extd t^j{}_k )(\sigma_a)^k{}_i)\\
=&\big(-\imath\lambda (\extd t^i{}_k (\sigma_a)^k{}_j - (\sigma_a)^i{}_k \extd t^k{}_j) \big)^*.
\end{align*}

By Proposition \ref{super D(H)}, the action of $\Omega(\C[SU_2])\lcross \Omega(U(su_2))$ on $\Omega(U(su_2))$ as a super module algebra is given by
\[x_a \ra x_b = [x_a,x_b], \quad (\extd x_a) \ra x_b = [\extd x_a, x_b], \quad x_a \ra \extd x_b = [x_a, \extd x_b], \quad (\extd x_a) \ra \extd x_b = \{\extd x_a, \extd x_b\},\]
\[x_a \ra t^i{}_j = x_a \delta^i{}_j -\imath\lambda (\sigma_a)^i{}_j, \quad (\extd x_a )\ra t^i{}_j = \extd x_a \delta^i{}_j, \quad x_a \ra \extd t^i{}_j = 0\]
\[\theta \ra t^i{}_j = \theta \delta^i{}_j, \quad (\extd x_a) \ra \extd t^i{}_j = 0, \quad \theta \ra \extd t^i{}_j = 0.\]
\end{proof}

\begin{remark}\label{remark sigma}
One can replace $A^{\op}$ by $A$ in Proposition~\ref{super D(H)}  and regard the Hopf algebra pairing $\langle \ , \ \rangle$ as a skew pairing $\sigma : H\tens A\to k$ defined as a convolution-invertible map satisfying
\[\sigma(hg, a) = \sigma(h, a\o)\sigma(g, a\t), \quad \sigma(h, ab) = \sigma(h\t, a)\sigma(h\o,b)\]
for $h,g\in H$, $a,b\in A$. Here $\langle S( \ ), \ \rangle$ provides $\sigma^{-1}$ if we start with a Hopf algebra pairing. The above is then equivalent to a generalised quantum double $A \dcross_{\sigma} H$ \cite[Prop.~7.2.7]{Ma:book}. By extending to $\sigma : \Omega(H)\underline{\tens}\Omega(A)\to k$ by 0 for degree $\geq 1$, we have a super double cross product $\Omega(A)\dcross_{\sigma}\Omega(H)$, and by Theorem \ref{thm super dcross} we have $\Omega(A\dcross_\sigma H):= \Omega(A)\dcross_\sigma \Omega(H)$. In this equivalent approach, we work with a strongly bicovariant exterior algebra $\Omega(A)$ rather than with $\Omega(A)^{\op}$.
\end{remark}

\subsection{Exterior algebra on $A\dcross_{\CR} A$} \label{secAdcrossA}

Recall\cite[Chapter~2.2]{Ma:book} that a coquasitriangular Hopf algebra is a Hopf algebra $A$ equipped with involution-invertible $\CR : A \tens A \to k$ such that 
\[ \mathcal{R}(ab,c)=\mathcal{R}(a,c{\o})\mathcal{R}(b,c{\t}),\quad \mathcal{R}(a,bc)=\mathcal{R}(a{\o},c)\mathcal{R}(a{\t},b),\]
\[ a{\o}b{\o}\mathcal{R}(b{\t},a{\t})=\mathcal{R}(b{\o},a{\o})b{\t}a{\t}\]
for all $a,b,c\in A$. In this case we can view $A$ as skew-paired with itself by $\sigma=\CR$ in Remark \ref{remark sigma} and have $D(A,A)=A\dcross_\CR A$ with $A$ left and right acting on itself by
\[b\ra a = b\t \CR(Sb\o, a\o)\CR(b\th, a\t), \quad b\la a = a\t \CR(Sb\o, a\o)\CR(b\t, a\th)\]
for all $a,b \in A$ to give the double cross product structure. The product is $(a \tens b)(c \tens d)= \CR(Sb\o, c\o) a c\t \tens b\t  d\CR(b\th, c\th)$ for all $a,b,c,d\in A$ as in \cite[Sec.~7.2]{Ma:book}.  Hence by the above, if each copy of $A$ has a strongly  bicovariant exterior algebra (they do not need to be the same), say $\Omega'(A)$ and $\Omega(A)$ respectively, we extend $\CR$ by 0 as a skew pairing on degree $\geq 1$ and the double cross product actions extend to actions of each exterior algebra on the other
\begin{align*}
\eta \ra \omega =&\begin{cases} \eta\t \CR(S\eta\o, \omega\o)\CR(\eta\th, \omega\t) & \text{if $\omega \in A$}\\
0 & \text{otherwise}
\end{cases}\\
\eta \la \omega =&\begin{cases} \omega\t \CR(S\eta\o, \omega\o)\CR(\eta\t, \omega\th) & \text{if $\eta \in A$}\\
0 & \text{otherwise}
\end{cases}
\end{align*}
for $\omega\in\Omega'(A)$, $\eta\in\Omega(A)$, where only the parts of $\Delta_*^2 \omega \in A \tens \Omega'(A)\tens A$ and $\Delta_*^2 \eta \in A \tens \Omega(A)\tens A$ contribute in the first and second action respectively.  One can check that these actions obey (\ref{d_H dcross})--(\ref{dcross4}), so we have a super double version $\Omega'(A)\dcross_{\CR} \Omega(A)$ with product and coproduct 
\[(\omega \tens \eta)(\tau \tens \xi)=(-1)^{|\eta||\tau|} \CR(S\eta\o, \tau\o) \omega \tau\t \tens \eta\t  \xi\CR(\eta\th, \tau\th) \]
\[\Delta_*(\omega \tens \eta) = (-1)^{|\eta\o||\omega\t|}\omega\o \tens \eta\o \tens \omega\t \tens \eta\t.\]
for all $\omega, \tau\in \Omega'(A)$, $ \eta, \xi \in \Omega(A)$. In the above product, the crossing between $\eta\t$ and $\tau\t$  should have generated a factor $(-1)^{|\eta\t||\tau\t|}$ but $\CR$ is 0 on degree $\geq 1$, so again only the parts of $\Delta_*^2 \eta, \Delta_*^2 \tau$  with outer factors in $A$ contribute, resulting in $|\eta\t|=|\eta|$ and $|\tau\t|=|\tau|$.

\begin{corollary}\label{super dcross coquasi}
Let $\Omega'(A),\Omega(A)$ be strongly bicovariant exterior algebras on  a coquasitriangular Hopf algebra  $A$. Then $\Omega(A\dcross_{\CR} A):=\Omega'(A)\dcross_{\CR}\Omega(A)$ is a strongly bicovariant exterior algebra on $A\dcross_{\CR} A$ with  differential
\[\extd(\omega \tens \eta)= \extd'_A \omega \tens \eta + (-1)^{|\omega|}\omega \tens \extd_A \eta\]
for $\omega\in\Omega'(A)$ and $\eta\in \Omega(A)$. Moreover,  the action of $A\dcross_{\CR} A$ on $A$ by $a\ra (b\tens c)  = \CR(a\t, b) (Sc\o)a\o c\t$ for all $a,b,c \in A$ is differentiable, extending to an action of $\Omega(A\dcross_{\CR} A)$ on $\Omega(A)$ by 
\begin{align*}
\tau \ra (\omega \tens \eta) =& \begin{cases}
(-1)^{|\eta\o||\tau|}\CR(\tau\t, \omega)(S\eta\o)\tau\o\eta\t & \text{if $\omega \in A$}\\
0 & \text{otherwise}
\end{cases}
\end{align*}
for $\tau\in\Omega(A)$. Here only the part of $\Delta_* \tau \in A \tens \Omega(A)$ contributes. \end{corollary}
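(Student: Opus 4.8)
The plan is to read this as the coquasitriangular specialisation of Proposition~\ref{super D(H)} in the skew-pairing formulation of Remark~\ref{remark sigma}, taking the second Hopf algebra to be $A$ again and $\sigma=\CR$. The three coquasitriangularity axioms say precisely that $\CR$ is a convolution-invertible skew pairing of $A$ with itself (with $\CR(S(\ ),\ )$ supplying the inverse), which is exactly the data Remark~\ref{remark sigma} permits in place of a Hopf algebra pairing of $A^{\op}$ with $H$. Thus everything established for $D(A,H)$ transfers once $\langle\ ,\ \rangle$ is replaced throughout by $\CR$, and the work of the corollary is to record the resulting formulas and confirm that no genuinely new computation arises.

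For the first assertion I would use that the actions $\eta\ra\omega$ and $\eta\la\omega$ displayed before the statement have already been extended to $\Omega(A)$ by letting $\CR$ vanish on forms of degree $\ge 1$, and that the surrounding discussion verifies the super double cross product conditions (\ref{d_H dcross})--(\ref{dcross4}). Hence the $\sigma$-version of Theorem~\ref{thm super dcross} makes $\Omega'(A)\dcross_\CR\Omega(A)$ a super-Hopf algebra, and the stated differential turns it into an exterior algebra on $A\dcross_\CR A$. The only differential input is the pair of Leibniz identities (\ref{d_H dcross})--(\ref{d_A dcross}) for the extended actions: because $\CR$ annihilates positive-degree arguments, only the part of $\Delta_*^2\eta$ with degree-zero outer legs survives, and the super-coderivation property of $\extd$ then forces the differential onto the single degree-carrying middle leg, which is exactly the content of (\ref{d_H dcross})--(\ref{d_A dcross}). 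Strong bicovariance, i.e. the super-coderivation property of $\extd$ on the whole double, is then immediate as in Theorem~\ref{thm super dcross}, since $\Delta_*$ is the super tensor coproduct of Lemma~\ref{lemma diff tens prod}.

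For the differentiability of the action I would specialise the second half of Proposition~\ref{super D(H)}. Replacing $\langle\ ,\ \rangle$ by $\CR$ sends $g\ra(a\tens h)=\langle g\t,a\rangle(Sh\o)g\o h\t$ to the stated $a\ra(b\tens c)=\CR(a\t,b)(Sc\o)a\o c\t$, and the graded extension $\xi\ra(\omega\tens\eta)$ to the stated $\tau\ra(\omega\tens\eta)$. Since $\Delta_*\tau=\Delta_R\tau+(\text{terms of higher degree on the second leg})$ and $\CR$ kills those legs, only the component $\tau^{\barnot}\tens\tau^{\baro}$ contributes, producing the explicit form with $\CR(\tau^{\baro},\omega)$ and $\tau^{\barnot}$. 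The substantive check is the graded Leibniz rule $\extd_A(\tau\ra(\omega\tens\eta))=(\extd_A\tau)\ra(\omega\tens\eta)+(-1)^{|\tau|}\tau\ra\extd(\omega\tens\eta)$, whose proof copies the three-term computation in Proposition~\ref{super D(H)} once one observes that $\tau\ra(\extd'_A\omega\tens\eta)=0$ (again because $\CR$ vanishes on $\extd'_A\omega$), so that in the cross term $\extd(\omega\tens\eta)$ collapses to $\omega\tens\extd_A\eta$. One must of course also confirm that this extension is a super right module algebra action, which likewise mirrors the corresponding verification in Proposition~\ref{super D(H)}.

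The main obstacle is bookkeeping rather than anything conceptual: one has to confirm that the sign conventions and the substitution $\langle\ ,\ \rangle\leadsto\CR$ reproduce, term by term, the identities used for $D(A,H)$, with the quasi-commutativity axiom $a\o b\o\CR(b\t,a\t)=\CR(b\o,a\o)b\t a\t$ now carrying the structural weight previously borne by the dual pairing together with passage to $A^{\op}$. I expect that once the degree-zero reductions coming from $\CR=0$ on positive degree are installed, every remaining manipulation is identical to the dually paired case, so the corollary follows with no new effort.
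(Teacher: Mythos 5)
Your proposal is correct and follows essentially the same route as the paper: the paper's own proof is precisely the observation that Corollary~\ref{super dcross coquasi} is immediate from Proposition~\ref{super D(H)} with $\langle\ ,\ \rangle$ replaced by $\CR$ and $\langle S(\ ),\ \rangle$ by $\CR^{-1}$, exactly as in Remark~\ref{remark sigma}, with the extension of $\CR$ by zero on degree $\ge 1$ and the verification of (\ref{d_H dcross})--(\ref{dcross4}) already recorded in the text preceding the statement. Your additional spelling-out of the degree-zero reductions and the Leibniz check merely makes explicit what the paper leaves as a direct specialisation.
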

\begin{proof}
This is immediate from Proposition \ref{super D(H)}, where $\langle \ , \ \rangle$ is replaced by $\CR$, and $\langle S( \ ), \ \rangle$ is replaced by $\CR^{-1}$ as in Remark \ref{remark sigma}. \end{proof}

Later on, in Section~\ref{sectrans}, we will see that a natural coaction of $A\dcross_{\CR}A$ on a certain transmutation $\underline{A}$ of $A$ is also differentiable under certain circumstances. 

\begin{example}\label{ex dcross A(R)} Let $R \in M_n(k)\tens M_n(k)$ be a $q$-Hecke $R$-matrix and $A(R)$ be the FRT bialgebra over a field $k$,  generated by $\bt=(t^i{}_j)$ as in Section~\ref{secFRT}. We assume that there is a grouplike central $D\in A(R)$ and we let $A=A(R)[D^{-1}]$ with $\Omega(A)$ its strongly bicovariant exterior algebra from Lemma \ref{FRTcalc}. Remembering that $-R_{21}^{-1}$ is also $q$-Hecke and gives the same algebra $A$ but a conjugate calculus $\Omega'(A)$, we have four natural strongly bicovariant calculi on $A\dcross_\CR A$ amounting, without loss of generality, to two distinct constructions:

Case (i):  $\Omega(A\dcross_{\CR} A):=\Omega(A)\dcross_{\CR} \Omega(A)$ with the cross relations and coproducts
\[R\bt_1\bs_2 = \bs_2\bt_1 R, \quad (\extd\bs_2)\bt_1R = R\bt_1\extd\bs_2, \quad R(\extd \bt_1)\bs_2 = \bs_2 \extd\bt_1 R, \quad R\extd\bt_1 \extd\bs_2 = -\extd\bs_2 \extd\bt_1 R\]
\[\Delta\bt = \bt \tens \bt, \quad \Delta \bs = \bs\tens \bs, \quad \Delta_* \extd\bt= \extd\bt\tens \bt + \bt\tens \extd\bt, \quad \Delta_* \extd\bs = \extd\bs\tens \bs + \bs \tens \extd\bs,\]
where $\bs,\bt$ respectively generate the two copies of $A$. 

Case (ii): $\Omega(A\dcross_{\CR} A):=\Omega'(A)\dcross_{\CR} \Omega(A)$ with the same cross relations and coproducts as above but  $(\extd \bs_1)\bs_2=R^{-1}\bs_2\extd\bs_1 R_{21}^{-1}$ and $\extd \bs_1\extd \bs_2=-R^{-1}\extd \bs_2\extd \bs_1R_{21}^{-1}$ for the calculus on the first copy of $A$ (the other copy remains as in Lemma~\ref{FRTcalc}). 

In both cases, $A\dcross_{\CR} A$ acts on $A$ differentiably. By Corollary~\ref{super dcross coquasi}, the action is 
\[\bt_1 \ra \bt_2 = (S\bt_2)\bt_1 \bt_2, \quad (\extd \bt_1) \ra \bt_2 = (S\bt_2)(\extd \bt_1) \bt_2, \quad \bt_1 \ra \extd \bt_2 =  (S\extd\bt_2)\bt_1\bt_2 + (S\bt_2)\bt_1 \extd \bt_2\]
\[\bt_1 \ra \bs_2 = \bt_1 R, \quad (\extd \bt_1) \ra \bs_2 =  (\extd \bt_1)R, \quad \bt_1 \ra \extd \bs_2 = 0.\]
\end{example}

\subsection{Exterior algebras by super double cross coproduct}
Let $H$ and $A$ be bialgebras or Hopf algebras with $A$ a right $H$-comodule algebra with right coaction $\alpha : A\to A\tens H$ and $H$ a left $A$-comodule algebra with left coaction $\beta : H \to A\tens H$. Suppose that $\alpha$ and $\beta$ are compatible as in \cite[Ex.~7.2.15 ]{Ma:book} such that they form a double cross coproduct $H\codcross A$.

Now let $\Omega(H)$ and $\Omega(A)$ be strongly bicovariant exterior algebras and let $\alpha$ and $\beta$ be differentibale with extensions  $\alpha_* : \Omega(A)\to \Omega(A)\underline{\tens} \Omega(H)$ and $\beta_* : \Omega(H)\to \Omega(A)\underline{\tens} \Omega(H)$ as super comodule algebras (commuting with $\extd$ for the graded tensor product differential). Suppose further that $\alpha_*$ and $\beta_*$ are compatible in the sense
\begin{align}
(\Delta_{A*}\tens \id )\circ\alpha_*(\omega)  =& ((\id \tens \beta_*)\circ \alpha_*(\omega\o)) (1_A\tens \alpha_* (\omega\t))\label{codcross cond1}\\
(\id\tens \Delta_{H*})\circ\beta_*(\eta) =& (\beta_*(\eta\o)\tens 1_H)((\alpha_* \tens \id)\circ \beta_*(\eta\t))\label{codcross cond2}\\
\alpha_*(\omega)\beta_*(\eta) =& (-1)^{|\omega||\eta|}\beta_*(\eta)\alpha_*(\omega).\label{codcross cond3}
\end{align}
Then there is a super double cross coproduct $\Omega(H)\codcross \Omega(A)$ with super tensor product algebra structure, counit, and 
\[\Delta_*(\eta \tens \omega) = (-1)^{|\omega\o||\eta\t|}\eta\o \tens \alpha_*(\omega\o)\beta_*(\eta\t)\tens \omega\t\]
for $\eta\in\Omega(H)$ and $\omega\in \Omega(A)$. We omit the proof that $\Omega(H)\codcross\Omega(A)$ is a super Hopf algebra since this is just a super version of the usual double cross coproduct\cite{Ma:book}.

\begin{theorem}\label{thm super codcross}
Let $A, H$ be bialgebras or Hopf algebras forming a double cross coproduct $H\codcross A$ and $\Omega(A), \Omega(H)$ strongly bicovariant exterior algebras with $\alpha,\beta$ differentiable and $\alpha_*, \beta_*$ satisfying (\ref{codcross cond1})-(\ref{codcross cond3}). Then $\Omega(H\codcross A):=\Omega(H)\codcross \Omega(A)$ is a strongly bicovariant exterior algebra on $H\codcross A$ with differential
\[\extd (\eta\tens \omega) = \extd_H \eta \tens \omega + (-1)^{|\eta|}\eta \tens \extd_A \omega.\]
\end{theorem}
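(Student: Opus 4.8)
The plan is to mirror the proof of Theorem~\ref{thm super dcross} with the roles of product and coproduct interchanged. Here $\Omega(H)\codcross\Omega(A)$ carries the super tensor product \emph{algebra} structure and the stated differential is exactly the associated tensor-product differential, so the differential graded algebra axioms are the easy part: $\extd$ restricts to $\extd_H$ on $\Omega(H)\tens 1$ and to $\extd_A$ on $1\tens\Omega(A)$, the graded Leibniz rule and $\extd^2=0$ then follow verbatim as in Lemma~\ref{lemma diff tens prod}, and $\Omega(H)\codcross\Omega(A)$ is generated in degrees $0$ and $1$ since each factor is. Degree-preservation of $\Delta_*$ follows from $\alpha_*,\beta_*$ being degree-preserving, in degree $0$ the formula for $\Delta_*$ reduces to the coproduct of $H\codcross A$ by construction, and the super-Hopf algebra structure is taken as given.

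The real content is the super-coderivation property~(\ref{super-coderivation}), which, unlike in Theorem~\ref{thm super dcross}, does not reduce trivially to the two factors because the coproduct is genuinely twisted by $\alpha_*$ and $\beta_*$. The key observation I would exploit is that both $\Delta_*\circ\extd$ and $(\extd\tens\id+(-1)^{|\ |}\id\tens\extd)\circ\Delta_*$ are graded derivations from $\Omega(H)\codcross\Omega(A)$ into $(\Omega(H)\codcross\Omega(A))\underline{\tens}(\Omega(H)\codcross\Omega(A))$, the target viewed as a bimodule via the algebra map $\Delta_*$; this holds because $\Delta_*$ is an algebra homomorphism and $\extd\tens\id+(-1)^{|\ |}\id\tens\extd$ is a graded derivation on the super tensor product. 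Their difference is then a graded derivation, so it vanishes identically once it vanishes on a set of algebra generators, namely on $\eta\tens 1$ with $\eta\in H\cup\Omega^1(H)$ and on $1\tens\omega$ with $\omega\in A\cup\Omega^1(A)$.

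First I would treat $\eta\tens 1$. Using $\alpha_*(1)=1\tens 1$ the coproduct collapses to $\Delta_*(\eta\tens 1)=\eta\o\tens\beta_*(\eta\t)\tens 1$. Expanding $\Delta_*(\extd_H\eta\tens 1)$ by the super-coderivation property of $\Omega(H)$ on one side, and $(\extd\tens\id+(-1)^{|\ |}\id\tens\extd)\Delta_*(\eta\tens 1)$ on the other, the two match provided the $\extd_A$-term and the $\extd_H$-term landing on the middle factor recombine into the tensor-product differential $\extd_{A\tens H}$ applied to $\beta_*(\eta\t)$, and that this equals $\beta_*(\extd_H\eta\t)$. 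The latter is exactly the differentiability of $\beta$, i.e. $\beta_*\extd_H=\extd_{A\tens H}\beta_*$. Symmetrically, the check on $1\tens\omega$ reduces to the super-coderivation property of $\Omega(A)$ together with the differentiability of $\alpha$.

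The hard part will be the sign bookkeeping in the twisted coproduct when applying $\extd\tens\id+(-1)^{|\ |}\id\tens\extd$ to $\eta\o\tens\beta_*(\eta\t)\tens 1$: the degree of the first leg $\eta\o$ produces a sign $(-1)^{|\eta\o|}$ that must exactly reproduce the sign in $\Delta_{H*}\extd_H\eta=\extd_H\eta\o\tens\eta\t+(-1)^{|\eta\o|}\eta\o\tens\extd_H\eta\t$, after which one recognises the recombination of the $\extd_A$ and $\extd_H$ contributions on $\beta_*(\eta\t)$ into a single $\extd_{A\tens H}$. I note finally that the compatibility conditions~(\ref{codcross cond1})--(\ref{codcross cond3}) are not needed for~(\ref{super-coderivation}) itself; they enter only through the assumed fact that $\Delta_*$ is a well-defined coassociative super-algebra map, which the derivation argument relies upon.
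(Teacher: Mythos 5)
Your proof is correct, but it takes a genuinely different route from the paper's. The paper proves the super-coderivation property (\ref{super-coderivation}) by one direct computation on an arbitrary element $\eta\tens\omega$: it expands $\Delta_*\extd(\eta\tens\omega)$ using the coderivation property of each factor, then merges the two middle terms $\eta\o\tens\alpha_*(\extd_A\omega\o)\beta_*(\eta\t)\tens\omega\t$ and $\eta\o\tens\alpha_*(\omega\o)\beta_*(\extd_H\eta\t)\tens\omega\t$ into $\eta\o\tens\extd'\big(\alpha_*(\omega\o)\beta_*(\eta\t)\big)\tens\omega\t$, using differentiability of $\alpha,\beta$ (i.e.\ $\alpha_*\extd_A=\extd'\alpha_*$ and $\beta_*\extd_H=\extd'\beta_*$) together with the graded Leibniz rule for $\extd'=\extd_A\tens\id+(-1)^{|\ |}\id\tens\extd_H$, and finally recognises the right-hand side of (\ref{super-coderivation}). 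You never confront the general twisted middle factor: you observe that $\Delta_*\circ\extd$ and $(\extd\tens\id+(-1)^{|\ |}\id\tens\extd)\circ\Delta_*$ are both graded derivations along the super-algebra map $\Delta_*$, so their difference vanishes once it vanishes on the algebra generators $\eta\tens 1$ and $1\tens\omega$ with $\eta,\omega$ of degree $0$ or $1$ (which suffices since each factor is generated in those degrees), and there $\alpha_*(1)=\beta_*(1)=1\tens 1$ collapses the coproduct so that the identity reduces to the factor coderivation property plus differentiability of $\beta$ (respectively $\alpha$) --- and your sign analysis at these generators is exactly right. Your closing observation that (\ref{codcross cond1})--(\ref{codcross cond3}) enter only through $\Delta_*$ being a well-defined super-algebra map is accurate, and is a point the paper leaves implicit. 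The trade-off: your argument leans on the algebra-map property of $\Delta_*$ (part of the super-Hopf structure whose verification the paper omits), whereas the paper's computation establishes (\ref{super-coderivation}) without invoking that property; in exchange, you avoid essentially all sign bookkeeping on general elements and localise each hypothesis to the generator where it is used. Both proofs consume the same inputs and both reduce the DGA axioms to Lemma~\ref{lemma diff tens prod}.
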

\begin{proof}
Since the algebra structure is the super tensor product, the graded Leibniz rule is already proved in Lemma \ref{lemma diff tens prod}. That $\extd$ is a super-coderivation is
\begin{align*}
\Delta_* \extd(\eta&\tens \omega)=\Delta_*(\extd_H \eta \tens \omega)+(-1)^{|\eta|}\Delta_*(\eta\tens \extd_A \omega)\\
=&(-1)^{|\omega\o||\eta\t|}\extd_H \eta\o \tens \alpha_*(\omega\o)\beta_*(\eta\t)\tens \omega\t\\
&+(-1)^{|\omega\o||\eta\t|+|\omega\o|+|\eta\o|}\eta\o\tens \alpha_*(\omega\o)\beta_*(\extd_H\eta\t)\tens \omega\t\\
&+(-1)^{|\omega\o||\eta\t|+|\eta\o|}\eta\o \tens \alpha_*(\extd_A \omega\o)\beta_*(\eta\t)\tens \omega\t\\
&+(-1)^{|\omega\o||\eta\t|+|\eta|+|\omega\o|}\eta\o \tens \alpha_*(\omega\o)\beta_*(\eta\t)\tens \extd_A \omega\t\\
=&(-1)^{|\omega\o||\eta\t|}\extd_H \eta\o \tens \alpha_*(\omega\o)\beta_*(\eta\t)\tens \omega\t\\
&+(-1)^{|\omega\o||\eta\t|+|\eta\o|}\eta\o \tens \extd'(\alpha_*(\omega\o)\beta_*(\eta\t))\tens \omega\t\\
&+(-1)^{|\omega\o||\eta\t|+|\eta|+|\omega\o|}\eta\o \tens \alpha_*(\omega\o)\beta_*(\eta\t)\tens \omega\t\\
=&\big((\extd_H \tens \id + (-1)^{| \ |}\id \tens \extd_A)\tens \id\big) ((-1)^{|\omega\o||\eta\t|}\eta\o \tens \alpha_*(\omega\o)\beta_*(\eta\t)\tens \omega\t)\\
&+(-1)^{| \ |}\big(\id \tens(\extd_H \tens \id + (-1)^{| \ |} \id \tens \extd_A) \big)((-1)^{|\omega\o||\eta\t|}\eta\o \tens \alpha_*(\omega\o)\beta_*(\eta\t)\tens \omega\t)\\
=&(\extd \tens \id + (-1)^{| \ |}\id \tens \extd)\Delta_*(\eta\tens \omega).
\end{align*}
In the first equality we expand $\extd(\eta\tens \omega)$ by its definition, with its comultiplication in the second equality. Notice that $\beta_*(\extd_H \eta\t) = \extd'(\beta_*(\eta\t))$ and $\alpha_*(\extd_A \omega\o) = \extd'(\alpha_*(\omega\o))$, so that  we obtain the third equality by the Leibniz rule for $\extd'$ on $\alpha_*(\omega\o)\beta_*(\eta\t)$. Expanding $\extd' = \extd_A \tens \id + (-1)^{| \ |}\id \tens \extd_H$,  one can rewrite so as to obtain the fourth equality, which is equivalent the required expression. 
\end{proof}

\begin{corollary}\label{codcrosscoact}
The double cross coproduct $H\codcross A$ left-coacts on $H$ as a comodule algebra by $\Delta_L h = h\o\tens\beta(h\t)$ for all $h\in H$ and right-coacts on $A$ as a comodule algebra by $\Delta_R a = \alpha(a\o)\tens a\t$ for all  $a\in A$. Both coactions are differentiable in the context of Theorem \ref{thm super codcross}.
\end{corollary}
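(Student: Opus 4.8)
The plan is to establish the degree-zero comodule algebra statement first and then to upgrade it to differentiability by recognising both coactions as corner components of the super-coproduct $\Delta_*$ of $\Omega(H\codcross A)=\Omega(H)\codcross\Omega(A)$, which is already strongly bicovariant by Theorem~\ref{thm super codcross}. For the comodule algebra property itself I would verify the coaction axioms directly: that $\Delta_L h=h\o\tens\beta(h\t)$ lands in $(H\codcross A)\tens H$ and is counital uses $(\eps_A\tens\id)\beta=\id$ together with the counit $\eps_H\tens\eps_A$ of $H\codcross A$, while coassociativity and the algebra map property are the degree-zero cases of the data making $H\codcross A$ a double cross coproduct bialgebra, with $\beta$ an algebra map and suitably comultiplicative. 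The statement for $\Delta_R a=\alpha(a\o)\tens a\t$ is the mirror image via $\alpha$. As this is the standard fact that a double cross coproduct coacts on each of its factors (dual to the double cross product acting on its factors, cf.\ \cite{Ma:book}), I would keep it brief.

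For differentiability I would write down the candidate extensions $\Delta_{L*}(\eta)=\eta\o\tens\beta_*(\eta\t)$ and $\Delta_{R*}(\omega)=\alpha_*(\omega\o)\tens\omega\t$, built from the super-coproducts of $\Omega(H),\Omega(A)$ and the given differentiable extensions $\beta_*,\alpha_*$. The point is that these are exactly what the super-coproduct of $\Omega(H\codcross A)$ produces on the sub-DGAs $\Omega(H)\cong\Omega(H)\tens 1$ and $\Omega(A)\cong 1\tens\Omega(A)$: since $\alpha_*,\beta_*$ preserve units one finds $\Delta_*(\eta\tens 1)=\eta\o\tens\beta_*(\eta\t)\tens 1$ and $\Delta_*(1\tens\omega)=1\tens\alpha_*(\omega\o)\tens\omega\t$, and applying the counit $\id\tens\eps_A$ to the spent ($A$-trivial) outer factor in the first case and $\eps_H\tens\id$ to the spent ($H$-trivial) outer factor in the second recovers $\Delta_{L*}$ and $\Delta_{R*}$ respectively. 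Now the inclusions $\Omega(H)\hookrightarrow\Omega(H\codcross A)$ and $\Omega(A)\hookrightarrow\Omega(H\codcross A)$ are maps of DGAs, and each counit projection is simultaneously a super-algebra map and a chain map, the latter because $\eps$ annihilates all forms of positive degree so that $\eps\extd=0$ and hence $\id\tens\eps$ commutes with the tensor-product differential. Consequently the super-coderivation property (\ref{super-coderivation}) of $\Delta_*$ on the strongly bicovariant $\Omega(H\codcross A)$ descends directly to $\Delta_{L*}\extd_H=\extd\,\Delta_{L*}$ and $\Delta_{R*}\extd_A=\extd\,\Delta_{R*}$, which is the requirement of Definition~\ref{defdif}; that $\Delta_{L*},\Delta_{R*}$ are super comodule algebra maps into $\Omega(H\codcross A)\underline{\tens}\Omega(H)$ and $\Omega(A)\underline{\tens}\Omega(H\codcross A)$ is inherited in the same way from $\Delta_*$ being a coassociative algebra map.

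The main obstacle I expect is the bookkeeping of Koszul signs and tensor-factor orderings: $\Omega(H\codcross A)=\Omega(H)\tens\Omega(A)$ places $\Omega(H)$ first, whereas $\beta_*$ and $\alpha_*$ take values in $\Omega(A)\underline{\tens}\Omega(H)$ with $\Omega(A)$ first, so one must track carefully where each homogeneous component sits before regrouping, and check that the signs generated by (\ref{super-coderivation}) agree with those in the super-coproduct formula. Confirming that the counit projection genuinely commutes with the graded tensor-product differential, and that the super forms of (\ref{codcross cond1})--(\ref{codcross cond3}) are precisely what the coaction axioms require in positive degree, are the remaining routine verifications.
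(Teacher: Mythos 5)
Your proposal is correct, and it arrives at exactly the extension formulas the paper uses, namely $\Delta_{L*}\eta=\eta\o\tens\beta_*(\eta\t)$ and $\Delta_{R*}\omega=\alpha_*(\omega\o)\tens\omega\t$; the difference lies in how the two halves of the statement are justified. The paper spends most of its effort on degree zero, explicitly checking coassociativity and the algebra-map property of $\Delta_L$ from the degree-zero cases of (\ref{codcross cond1})--(\ref{codcross cond3}), and then disposes of differentiability in one line by asserting that the displayed extensions "have the required properties by our assumptions on $\alpha_*,\beta_*$" (i.e.\ it would combine the super-coderivation property of $\Delta_{H*},\Delta_{A*}$ with the fact that $\alpha_*,\beta_*$ commute with $\extd$). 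You invert the emphasis: the degree-zero part is quoted as the standard fact that a double cross coproduct coacts on its factors, while differentiability is derived structurally by realising $\Delta_{L*}$ and $\Delta_{R*}$ as counit-corner projections of the super-coproduct $\Delta_*$ of $\Omega(H\codcross A)$ restricted to the sub-DGAs $\Omega(H)\tens 1$ and $1\tens\Omega(A)$, and then pushing the super-coderivation property (\ref{super-coderivation}) of Theorem~\ref{thm super codcross} through the projection, using that $\eps\extd=0$ makes $\id\tens\eps$ a chain map and a super-algebra map (the Koszul signs are harmless exactly because $\eps$ kills positive degree). Your route buys a precise, reusable argument for the step the paper leaves as "clear", and it leans on Theorem~\ref{thm super codcross} rather than re-invoking the raw hypotheses on $\alpha_*,\beta_*$; the paper's route is more self-contained at degree zero but leaves the reader to reconstruct the differentiability check. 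One could even streamline your version further by noting that the same corner-projection argument applied in degree zero also yields the comodule-algebra statement, making the appeal to the standard theory unnecessary.
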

\begin{proof}
We write $\alpha(a) = a^{\tilnot} \tens a^{\tilo}$ and $\beta(h) = h^{\baro}\tens h^{\barinfi}$ (summations understood) and  check that $\Delta_L:H\to H\codcross A\tens H$ is a coaction,
\begin{align*}
(\id \tens \Delta_L)\Delta_L h =& h\o \tens h\t{}^{\baro}\tens h\t{}^{\barinfi}\o \tens h\t{}^{\barinfi}\t{}^{\baro}\tens h\t{}^{\barinfi}\t{}^{\baro}\\
=&h\o \tens h\t{}^{\baro} h\th{}^{\baro \tilnot}\tens h\t{}^{\barinfi} h\th{}^{\baro \tilo} \tens h\th{}^{\barinfi \baro}\tens h\th{}^{\barinfi \barinfi}\\
=&h\o \tens h\t{}^{\baro} h\th{}^{\baro}\o{}^{\tilnot} \tens h\t{}^{\barinfi} h\th{}^{\baro}\o{}^{\tilo} \tens h\th{}^{\baro}\t \tens h\th{}^{\barinfi}\\
=& h\o \tens \beta(h\t)\alpha(h\th{}^{\baro}\o)\tens h\th{}^{\baro}\t \tens h\th{}^{\barinfi}\\
=& h\o \tens \alpha(h\th{}^{\baro}\o)\beta(h\t)\tens h\th{}^{\baro}\t \tens h\th{}^{\barinfi}\\
=&\Delta(h\o \tens h\t{}^{\baro})\tens h\t{}^{\barinfi} = (\Delta \tens \id)\Delta_L h. 
\end{align*}
We applied the definition of $\Delta_L$ twice in the first equality, and we used the condition (\ref{codcross cond2}) on $h\t{}^{\barinfi}$ in our notation of $\alpha$ and $\beta$ to obtain the second equality. Then since $\beta$ is a coaction, we rewrote $(\id \tens \beta)\beta (h\th{}^{\barinfi}) = (\Delta_H \tens \id)\beta(h\th{}^{\barinfi})$ to obtain the third equality, which is equivalent to the fourth equality. Then we used condition (\ref{codcross cond3}) to get the fifth equality,  and hence the final expression. We also check that $\Delta_L$ is an algebra map:
\begin{align*}
\Delta_L(hg)=& (h\o g\o) \tens \beta(h\t g\t) = h\o g\o \tens \beta(h\t)\beta(g\t) \\
=& (h\o \tens \beta(h\t))(g\o \tens \beta(g\t))= (\Delta_L h)(\Delta_L g).
\end{align*}
Thus, $H$ is a left $H\codcross A$-comodule algebra. The proof for $\Delta_R$ is similar. Since $\beta_*$ and $\alpha_*$ globally exist as assumed in Theorem \ref{thm super codcross}, it is clear that we can define
\[\Delta_{L*}\eta = \eta\o \tens \beta_*(\eta\t), \quad \Delta_{R*}\omega = \alpha_*(\omega\o)\tens \omega\t\]
for all $\eta \in \Omega(H)$ and $\omega \in \Omega(A)$, and that they have the required properties by our assumptions on $\alpha_*, \beta_*$. For example, on degree 1 we have
\[\Delta_{L*}\extd_H h = \extd_H h\o \tens \beta(h\t) + h\o \tens \beta_*(\extd_H h\t),\]
\[\Delta_{R*}\extd_A a = \alpha_*(\extd_A a\o) \tens a\t + \alpha(a\o)\tens \extd_A a\t.\]
\end{proof}

An application it so provide in principle a strongly bicovariant exterior algebra on the quantum codouble $coD(U_q(su_2)) = U_q(su_2)^{\cop}\codcross \C_q[SU_2]$ as a version of $\C_q[SO_{1,3}]$. We omit details since 
this case is essentially equivalent to $\C_q[SU_2]\dcross\C_q[SU_2]$ in  Example~\ref{ex dcross A(R)} as a different  version of $\C_q[SO_{1,3}]$. 

\section{Differentials on biproduct Hopf algebras}\label{secbiprod}

Here we cover quantum differentials on one of two cases where the Hopf algebra has a cross coproduct coalgebra as typical for an inhomogeneous quantum group coordinate algebra, namely biproducts or  bosonisations\cite{Rad,Ma:skl,Ma:bos,Ma:book}. 

\subsection{Exterior algebras by super bosonisation}\label{secbosthm}
Let  $A$ be a Hopf algebra with a strongly bicovariant exterior algebra $\Omega(A)$ and  $B$ a right $A$-comodule algebra with $\Omega(B)$ such that the right coaction $\Delta_R b= b^{\barnot}\tens b^{\baro}$ is differentiable with extension $\Delta_{R*} : \Omega(B)\to \Omega(B)\underline{\tens}\Omega(A)$ denoted by $\Delta_{R*}\eta = \eta^{\barnot *} \tens \eta^{\baro *}$.  Next we suppose the more specific data in succession: (i)  that $A$ also acts on $\Omega(B)$ so as to make this an $A$-crossed module algebra (an algebra in $\CM^A_A$); (ii) that this action  extends differentiably to $\ra:\Omega(B)\underline{\tens} \Omega(A)\to \Omega(B)$; (iii) that $B$ is a braided Hopf algebra in $\CM^A_A$; (iv) that  $\underline{\Delta}$ extends to a degree preserving super braided coproduct $\underline{\Delta}_*$ making $\Omega(B)$ a super-braided Hopf algebra in $\CM^{\Omega(A)}_{\Omega(A)}$ and $\extd_B$  a super-coderivation in the sense of (\ref{super-coderivation}). We say in this case that $\Omega(B)$ is {\em braided strongly bicovariant}. 

Also note that given a super-braided Hopf algebra $\Omega(B)\in \CM^{\Omega(A)}_{\Omega(A)}$, the usual bosonisation formulae extend with signs to define a super bosonisation $\Omega(A)\rbiprod \Omega(B)$ with 
\[(\omega \tens \eta)(\tau \tens \xi) = (-1)^{|\eta||\tau\o|}\omega \tau\o \tens (\eta\ra \tau \t)\xi\]
\[\Delta_*(\omega \tens \eta)=(-1)^{|\omega\t||\eta\underline{\o}^{\barnot *}|}\omega\o \tens \eta\underline{\o}^{\barnot *}\tens \omega\t \eta\underline{\o}^{\baro *}\tens \eta\underline{\t}\]
for all $\omega, \tau \in \Omega(A)$ and $\eta, \xi \in \Omega(B)$. 

\begin{theorem}\label{calcbos}
Let $A$ be a Hopf algebra, $B$ a braided Hopf algebra in $\CM^A_A$, $\Omega(A)$ strongly bicovariant and $\Omega(B)$  braided strongly bicovariant  in $\CM^{\Omega(A)}_{\Omega(A)}$. Then $\Omega(A\rbiprod B):=\Omega(A)\rbiprod \Omega(B)$ is a strongly bicovariant exterior algebra on $A\rbiprod B$ with differential
\[\extd(\omega\tens \eta)=\extd_A\omega\tens \eta + (-1)^{|\omega|}\omega \tens \extd_B \eta\]
for all $\omega \in \Omega(A)$, $\eta \in \Omega(B)$. \end{theorem}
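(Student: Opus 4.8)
The plan is to establish the three defining features of a strongly bicovariant exterior algebra in turn: that $\Omega(A)\rbiprod\Omega(B)$ is a DGA with the stated $\extd$, that it is a super-Hopf algebra restricting to $A\rbiprod B$ in degree zero, and that $\extd$ is a super-coderivation in the sense of \eqref{super-coderivation}.

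First I would treat the DGA structure. The stated $\extd$ is exactly the super tensor product differential on the underlying graded space $\Omega(A)\tens\Omega(B)$, so $\extd^2=0$ and generation in degrees $0,1$ are immediate from the factors (the cross terms $\pm\,\extd_A\omega\tens\extd_B\eta$ cancel). Since $\extd$ restricts to $\extd_A$ on $\Omega(A)\tens1$ and to $\extd_B$ on $1\tens\Omega(B)$, and these two subalgebras generate the bosonisation, the graded Leibniz rule need only be checked on the crossed products $(1\tens\eta)(\omega\tens1)=(-1)^{|\eta||\omega\o|}\omega\o\tens(\eta\ra\omega\t)$. Applying $\extd$ and using differentiability of the action in the form $\extd_B(\eta\ra\omega\t)=(\extd_B\eta)\ra\omega\t+(-1)^{|\eta|}\eta\ra\extd_A\omega\t$ from \eqref{diff act form} produces three terms; I then compare with $(1\tens\extd_B\eta)(\omega\tens1)+(-1)^{|\eta|}(1\tens\eta)(\extd_A\omega\tens1)$, expanding $\Delta_*\extd_A\omega=\extd_A\omega\o\tens\omega\t+(-1)^{|\omega\o|}\omega\o\tens\extd_A\omega\t$ from the super-coderivation property of $\Omega(A)$. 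The three terms match, the crucial point being that the sign $(-1)^{|\omega\o|}$ carried by the second half of $\Delta_*\extd_A\omega$ is precisely what reconciles the residual action term with the Leibniz term.

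For the coalgebra side, $\Omega(A)\rbiprod\Omega(B)$ is already a super-Hopf algebra by the super biproduct construction recalled just before the statement (using that $\Omega(B)$ is a super-braided Hopf algebra in $\CM^{\Omega(A)}_{\Omega(A)}$); its $\Delta_*$ is degree-preserving and restricts in degree zero to the coproduct of $A\rbiprod B$. So the only substantive point is \eqref{super-coderivation}. Here I would first note that this property is multiplicative: because $\Delta_*$ is a super-algebra map and both $\extd$ and the operator $D:=\extd\tens\id+(-1)^{|\ |}\id\tens\extd$ are graded derivations (the latter on the super tensor square), the set of elements obeying \eqref{super-coderivation} is a subalgebra, so it suffices to check on a generating set. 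On $\Omega(A)\tens1$, where $\Delta_*(\omega\tens1)=\omega\o\tens1\tens\omega\t\tens1$ and $\extd=\extd_A$, the identity reduces to the assumed super-coderivation property of $\Omega(A)$. On $1\tens\Omega(B)$ it is enough, using $\extd^2=0$ together with $D^2=0$ (which forces \eqref{super-coderivation} on $1\tens\extd_B b$ once it holds on $1\tens b$), to verify the degree-zero generators $1\tens b$ with $b\in B$.

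For $1\tens b$ I would expand $\Delta_*\extd(1\tens b)=\Delta_*(1\tens\extd_B b)$ by first using the braided super-coderivation $\underline{\Delta}_*\extd_B b=\extd_B b\underline{\o}\tens b\underline{\t}+b\underline{\o}\tens\extd_B b\underline{\t}$ (part of $\Omega(B)$ being braided strongly bicovariant), then applying $\Delta_{R*}$ to the left braided leg via differentiability of the coaction, $\Delta_{R*}\extd_B b\underline{\o}=\extd_B b\underline{\o}^{\barnot}\tens b\underline{\o}^{\baro}+b\underline{\o}^{\barnot}\tens\extd_A b\underline{\o}^{\baro}$. This gives three terms, to be matched against the three terms of $D\,\Delta_*(1\tens b)$ obtained by differentiating the two bosonisation tensor factors of $\Delta_*(1\tens b)=(1\tens b\underline{\o}^{\barnot})\tens(b\underline{\o}^{\baro}\tens b\underline{\t})$: the $\extd\tens\id$ piece reproduces the braided-coderivation term and the $\id\tens\extd$ piece reproduces the two coaction terms, all signs being trivial since the relevant legs sit in degree zero. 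The main obstacle throughout is the sign bookkeeping, and within it the crossed Leibniz computation is the delicate step, since it is there that differentiability of the action and the super-coderivation sign of $\Omega(A)$ must interlock; once that is secured, the coderivation check is a clean termwise comparison.
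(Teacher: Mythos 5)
Your proposal is correct and takes essentially the same route as the paper's proof: the Leibniz rule is checked on the cross relations $(1\tens\eta)(\omega\tens 1)$ using differentiability of the action and the super-coderivation property of $\Omega(A)$, the super-Hopf structure comes from the super bosonisation, and the coderivation property (\ref{super-coderivation}) is verified factorwise, on the $\Omega(B)$ side via the braided coderivation property of $\underline{\Delta}_*$ and differentiability of $\Delta_{R*}$. The only difference is organisational: the paper runs the coderivation computation directly for arbitrary $\eta\in\Omega(B)$, whereas you reduce to the degree-zero generators $1\tens b$ by your multiplicativity and square-zero bootstrap, a valid shortcut (the paper likewise relies implicitly on multiplicativity to pass from the two tensor factors to all of $\Omega(A)\rbiprod\Omega(B)$).
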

\begin{proof} 
The graded Leibniz rule holds since
\begin{align*}
\extd(\eta\omega)= &\extd((1\tens \eta)(\omega\tens 1))\\
=&(-1)^{|\eta||\omega\o|}\big((-1)^{|\omega\o|}\omega\o \tens (\extd_B \eta)\ra\omega\t + \extd_A \omega\o \tens \eta\ra \omega\t +(-1)^{|\eta|+|\omega\o|}\omega\tens \eta \ra \extd_A \omega\t\big)\\
=&(1\tens \extd_B \eta)(\omega\tens 1)+(-1)^{|\eta|}(1\tens \eta)(\extd_A \omega \tens 1)= (\extd \eta)\omega + (-1)^{|\eta|}\eta\extd\omega
\end{align*}
for all $\eta\in \Omega(B)$ and $\omega \in \Omega(A)$. Clearly $\extd^2=0$ and thus $\Omega(A)\rbiprod \Omega(B)$ is a DGA. It is a super Hopf algebra by superbosonisation and we check that $\extd$ is a super-coderivation. On $\omega\in\Omega(A)$ this just reduces to the same property there. For $\eta\in\Omega(B)$, 
\begin{align*}
\Delta_*\extd(1\tens\eta)&=\Delta_*(1\tens\extd_B\eta)=1\tens(\extd_B\eta)\underline{\o}^{\barnot *}\tens(\extd_B\eta)\underline{\o}^{\baro *}\tens (\extd_B\eta)\underline{\t}\\
&=1\tens(\extd_B\eta\underline{\o})^{\barnot *}\tens(\extd_B\eta\underline{\o})^{\baro *}\tens \eta\underline{\t} + (-1)^{|\eta\underline{\o}|}1\tens \eta\underline{\o}^{\barnot *}\tens \eta\underline{\o}^{\baro *}\tens\extd_B\eta\underline{\t}\\
&=1\tens\extd_B(\eta\underline{\o}^{\barnot *})\tens \eta\underline{\o}^{\baro *}\tens \eta\underline{\t} + (-1)^{|\eta\underline{\o}^{\barnot *}|}1\tens \eta\underline{\o}^{\barnot *}\tens \extd_A\eta\underline{\o}^{\baro *}\tens\eta\underline{\t}\\
&\quad+ (-1)^{|\eta\underline{\o}|}1\tens \eta\underline{\o}^{\barnot *}\tens \eta\underline{\o}^{\baro *}\tens\extd_B\eta\underline{\t}\\
&=1\tens \extd_B(\eta\underline{\o}^{\barnot *})\tens  \eta\underline{\o}^{\baro *}\tens\eta\underline{\t}+(-1)^{|\eta\underline{\o}^{\barnot *}|} 1\tens\eta\underline{\o}^{\barnot *}\tens  \extd(\eta\underline{\o}^{\baro *}\tens\eta\underline{\t})\\
&=(\extd \tens \id +(-1)^{| \ |}\id \tens \extd)\Delta_*(1\tens \eta). \end{align*}
We used the braided super coderivation property in $\Omega(B)$ for the 3rd equality and differentiability of the coaction so that $\Delta_{R*}$ commutes with the total exterior derivative for the 4th equality. We then recognised the answer.
\end{proof}

In practice, since $\Omega(A)$ and $\Omega(B)$ are generated by their elements of degree 0 and 1, we can construct the bosonisation $\Omega(A)\rbiprod \Omega(B)$ providing  we know the action and coaction on degree 0 and degree 1.  

\begin{lemma} \label{extension ra}
(i) If $B$ is an $A$-crossed module with differentiable action and coaction obeying 
\[\Delta_{R*}((\extd_B b) \ra a) = (\extd_B b^{\barnot}) \ra a\t \tens (Sa\o)b^{\baro}a\th + b^{\barnot} \ra a\t \tens Sa\o (\extd_A b^{\baro}) a\th\]
for all $b\in B$ and for all $a\in A$, then $\Omega(B)$ is a super $\Omega(A)$-crossed module algebra. 

(ii) If furthermore $B$ is a braided Hopf algebra in $\CM^A_A$, 
\[\underline{\Delta} (b\extd_Bc) = b\underline{\o}\extd_B c\underline{\o}{}^{\barnot} \tens (b\underline{\t} \ra c\underline{\o}{}^{\baro})c\underline{\t} + b\underline{\o}c\underline{\o}{}^{\barnot} \tens \big( (b\underline{\t}\ra \extd_A c\underline{\o}{}^{\baro})c\underline{\t} +(b\underline{\t}\ra c\underline{\o}{}^{\baro})\extd_B c\underline{\t} \big)\]
for all $b,c\in B$ is well-defined and $\Omega(B)$ is the maximal prolongation of $\Omega^1(B)$,
then $\Omega(B)$ is  braided strongly bicovariant calculus in  $\CM^{\Omega(A)}_{\Omega(A)}$. 
\end{lemma}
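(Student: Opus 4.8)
The plan is to handle both parts by the bootstrapping technique already used in Lemmas~\ref{maxDeltaR} and \ref{diff right action}: verify the required identities on the degree-$0$ and degree-$1$ generators of $\Omega(A)$ and $\Omega(B)$, and then propagate them to all degrees using that $\Omega(B)$ is a maximal prolongation and that $\extd$ intertwines all the structure maps in play.

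For part (i), I would first observe that two of the three ingredients are already supplied: differentiability of the action (Definition~\ref{defactdif}) makes $\Omega(B)$ a super right $\Omega(A)$-module algebra, and differentiability of the coaction (Definition~\ref{defdif}) makes it a super right $\Omega(A)$-comodule algebra. It then remains to check the super-crossed module compatibility
\[
\Delta_{R*}(\eta\ra\omega)=(-1)^{|\eta^{\baro *}|(|\omega\o|+|\omega\t|)+|\omega\o||\omega\t|}\,\eta^{\barnot *}\ra\omega\t\tens(S\omega\o)\eta^{\baro *}\omega\th
\]
for $\eta\in\Omega(B)$, $\omega\in\Omega(A)$. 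Both sides are compatible with products in $\eta$ (via the module-algebra and comodule-algebra structures) and with products in $\omega$ (via the module axiom and the fact that $\Delta_*$ is an algebra map), so it suffices to check this on generators. On degree $(0,0)$ it is exactly the crossed module condition of $B$ over $A$ that we have assumed; the degree $(1,0)$ case, $\eta=\extd_B b$ and $\omega=a$, is precisely the hypothesis of the lemma. The remaining cases, with $\omega=\extd_A a$, I would obtain by applying $\extd$ to the lower ones: using $\eta\ra\extd_A a=(-1)^{|\eta|}(\extd_B(\eta\ra a)-(\extd_B\eta)\ra a)$ from differentiability of the action, that $\Delta_{R*}$ commutes with $\extd$, and that $\extd$ is a super-coderivation for $\Delta_*$ (strong bicovariance of $\Omega(A)$, giving $\Delta_*\extd_A a=\extd_A a\o\tens a\t+a\o\tens\extd_A a\t$), the verified degree-$(\,\cdot\,,0)$ identities differentiate exactly into the crossed module identity with $\extd_A a$ in the second slot.

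For part (ii), I would define $\underline{\Delta}_*$ on $\Omega(B)$ by $\underline{\Delta}_*|_B=\underline{\Delta}$ in degree $0$ and by imposing the super-coderivation property $\underline{\Delta}_*\extd_B=(\extd_B\tens\id+(-1)^{|\ |}\id\tens\extd_B)\underline{\Delta}_*$, extended as a braided super-algebra map into $\Omega(B)\underline{\tens}\Omega(B)$ in $\CM^{\Omega(A)}_{\Omega(A)}$. Multiplying $\underline{\Delta}(b)$ by $\underline{\Delta}_*(\extd_B c)=\extd_B c\underline{\o}\tens c\underline{\t}+c\underline{\o}\tens\extd_B c\underline{\t}$ in the braided tensor product, using the braiding $\Psi(v\tens w)=(-1)^{|v||w^{\barnot}|}w^{\barnot}\tens(v\ra w^{\baro})$ together with differentiability of the coaction on $\extd_B c\underline{\o}$, should reproduce precisely the stated degree-$1$ formula for $\underline{\Delta}(b\extd_B c)$, which by hypothesis is well defined on $\Omega^1(B)$. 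Since $\Omega(B)$ is the maximal prolongation of $\Omega^1(B)$, the extension to all degrees is then automatic exactly as in Lemma~\ref{maxDeltaR}: for a relation $\sum b_i\extd_B c_i=0$ the coderivation property gives $\underline{\Delta}_*(\sum\extd_B b_i\extd_B c_i)=(\extd_B\tens\id+(-1)^{|\ |}\id\tens\extd_B)\underline{\Delta}_*(\sum b_i\extd_B c_i)$, which vanishes because $\underline{\Delta}_*$ already kills the degree-$1$ relation, so $\underline{\Delta}_*$ respects the quadratic relations of the prolongation. Finally, coassociativity, counitality, the morphism property in $\CM^{\Omega(A)}_{\Omega(A)}$, and the super antipode all hold in degree $0$ by the braided Hopf algebra structure of $B$ and propagate to all degrees by the same maximal-prolongation argument, making $\Omega(B)$ a super-braided Hopf algebra, i.e.\ braided strongly bicovariant.

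The step I expect to be the main obstacle is the sign bookkeeping. In (i), the super-crossed module identity carries the cubic sign $(-1)^{|\eta^{\baro *}|(|\omega\o|+|\omega\t|)+|\omega\o||\omega\t|}$, and confirming that differentiating the degree-$(0,0)$ and degree-$(1,0)$ cases reproduces it in the degree-$(1,1)$ case $(\extd_B b)\ra\extd_A a$ requires careful tracking of the Koszul signs generated as $\extd$ passes the comodule and coproduct legs. In (ii) the braided tensor product multiplication contributes further signs from $\Psi$ and from the grading that must be matched against the given formula. These computations are routine but delicate; once the signs are pinned down, the maximal-prolongation propagation is formal.
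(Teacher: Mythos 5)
Your proposal is correct and follows essentially the same route as the paper's own proof: for (i) the paper likewise takes the module-algebra and comodule-algebra structures as given from differentiability and then verifies the super-crossed-module condition on generators, deriving the cases $\Delta_{R*}(b\ra \extd_A a)$ and $\Delta_{R*}((\extd_B b)\ra \extd_A a)$ from the hypothesis via $b\ra\extd_A a=\extd_B(b\ra a)-(\extd_B b)\ra a$ and commutation of $\Delta_{R*}$ with $\extd$, concluding by generation in degrees $0,1$; for (ii) it runs exactly the maximal-prolongation argument of Lemma~\ref{maxDeltaR}, applying $\extd_B\tens\id$ and $\id\tens\extd_B$ to the two components of the degree-$1$ relation to see that the quadratic relations are killed, which is the component-wise expansion of your coderivation identity. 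The Koszul-sign bookkeeping you flag as the delicate point is precisely where the paper's explicit Sweedler computations are spent, but your outline is sound.
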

\begin{proof}
(i) We first check,
\begin{align*}
\Delta_{R*}(b \ra \extd_A a) =& \Delta_{R*}(\extd_B (b \ra a)  - (\extd_B b) \ra a)\\
=&  \extd_B(b \ra a)^{\barnot} \tens (b \ra a)^{\baro} +(b \ra a)^{\barnot}\tens \extd_A(b \ra a)^{\baro}-\Delta_{R*}((\extd_B \eta)\ra a) \\
=& \extd_B (b^{\barnot}\ra a\t)\tens (Sa\o)b^{\baro}a\th + b^{\barnot} \ra a\t \tens \extd_A((Sa\o)b^{\baro}a\th)\\
&-(\extd_B b^{\barnot}) \ra a\t \tens (Sa\o)b^{\baro}a\th -b^{\barnot}\ra a\t \tens Sa\o (\extd_A b^{\baro})a\th\\
=&  b^{\barnot}\ra \extd_A a\t \tens (Sa\o)b^{\baro} a\th + b^{\barnot} \ra a\t \tens (\extd_A Sa\o) b^{\baro}a\th \\
&+b^{\barnot}\ra a\t \tens (Sa\o)b^{\baro}\extd_A a\th\\
=& b^{\barnot}\ra (\extd_A a)\t \tens S(\extd_A a)\o b^{\baro}(\extd_A a)\th
\end{align*}
for all $b\in B$, $a\in A$. This makes $B$ is a crossed module as regards the action and coaction of $\Omega^1(A)$. Similarly for $\Omega^1(B)$,  where we check using (\ref{diff coact}),
\begin{align*}
\Delta_{R*}((\extd_B b)&\ra \extd_A a ) = \Delta_{R*}(\extd_B(b\ra \extd_A a))\\
=& \extd_B(b\ra \extd_A a)^{\barnot} \tens (b\ra \extd_A a)^{\baro} + (-1)^{|(b \ra \extd_A a)^{\barnot}|}(b \ra \extd_A a)^{\barnot}\tens \extd_A (b \ra \extd_A a)^{\baro} \\
=&\extd_B(b^{\barnot}\ra \extd_A a\t) \tens (Sa\o)b^{\barnot}a\th + \extd_B(b^{\barnot} \ra a\t) \tens (\extd_A Sa\o)b^{\baro}a\th \\
&+ \extd_B(b^{\barnot} \ra a\t)\tens (Sa\o)b^{\baro}\extd_A a\th - b^{\barnot} \ra \extd_A a\t \tens \extd_A ((Sa\o)b^{\baro}a\th)\\
&+b^{\barnot}\ra a\t \tens \extd_A ((\extd_A Sa\o) b^{\baro}a\th) + b^{\barnot}\ra a\t \tens \extd_A (Sa\o b^{\baro}\extd_A a\th)\\
=&(\extd_B b^{\barnot}) \ra \extd_A a\t \tens (Sa\o)b^{\baro}a\th + (\extd_B b^{\barnot}) \ra a\t \tens (S\extd_A a\o)b^{\baro} a\th \\
&+(\extd_B b^{\barnot})\ra a\t \tens (Sa\o)b^{\baro}\extd_A a\th - b^{\barnot}\ra \extd_A a\t \tens (Sa\o)(\extd_A b^{\baro})a\th\\
&-b^{\barnot}\ra a\t \tens (S\extd_Aa\o)(\extd_A b^{\baro})a\th + b^{\barnot}\ra a\t \tens (Sa\o)(\extd_A b^{\baro})a\th\\
=& (\extd_B b^{\barnot}) \ra (\extd_A a)\t \tens S(\extd_A a)\o b^{\baro} (\extd_A a)\th \\
&+ (-1)^{|(\extd_A a)\o|+|(\extd_A a)\t|}b^{\barnot}\ra (\extd_A a)\t \tens S(\extd_A a)\o (\extd_B b^{\baro}) (\extd_A a)\th\\
=& (-1)^{|(\extd_B b)^{\baro}| (|(\extd_A a)\o|+|(\extd_A a)\t|)}(\extd_B b)^{\barnot}\ra (\extd_A a)\t \tens S(\extd_A a)\o (\extd_B b)^{\baro}(\extd_A a)\th.
\end{align*}
Since the action and coaction extend to $\ra : \Omega(B)\tens \Omega(A)\to \Omega(B)$ and $\Delta_{R*}$ differentiably, and both exterior algebras are generated by degrees 0,1, it follows that $\Delta_{R*}(\eta\ra\omega)$ obeys the crossed module condition in general, making $\Omega(B)$ a super  $\Omega(A)$-crossed module.

(ii) We need to prove that $\underline{\Delta}_* (\xi \eta) = (\underline{\Delta}_* \xi)(\underline{\Delta}_* \eta)$ for all $\xi,\eta\in \Omega(B)$, for which it suffices to prove that $\underline{\Delta}_*$ extends to $\Omega^2(B)$ since $\Omega(B)$ is the maximal prolongation of $\Omega^1(B)$. Applying $\underline{\Delta}_*$ to $b\extd_B c =0$ (for $b,c\in B$, a sum of such terms understood), we have
\[b\underline{\o}\extd_B c\underline{\o}^{\barnot}\tens (b\underline{\t} \ra c\underline{\o}^{\baro})c\underline{\t}=0, \quad b\underline{\o}c\underline{\o}^{\barnot} \tens ((b\underline{\t} \ra \extd_A c\underline{\o}^{\baro})c\underline{\t} +(b\underline{\t}\ra c\underline{\o}^{\baro})\extd_B c\underline{\t})=0.\]

Applying $\extd_B \tens \id$ to the first equation, we have
\[(\extd_B b\underline{\o}) \extd_B c\underline{\o}^{\barnot}\tens (b\underline{\t}\ra c\underline{\o}^{\baro})c\underline{\t} = 0\]
which is the $\Omega^2(B)\tens B$ part of $\underline{\Delta}_* ((\extd_B b)\extd_B c)$. Applying $\id \tens \extd_B$ to the second equation, we have
\[b\underline{\o}c\underline{\o}^{\barnot} \tens ((\extd_B b\underline{\t})\ra \extd_A c\underline{\o}^{\baro})c\underline{\t} + b\underline{\o}c\underline{\o}^{\barnot}\tens ((\extd_B b\underline{\t})\ra c\underline{\o}^{\baro})\extd_B c\underline{\t} = 0\]
which is the $B\tens \Omega^2(B)$ part of $\underline{\Delta}_* (\extd_B b \extd_B c)$. Finally, applying $\extd_B \tens \id$ to the second equation and $\id \tens \extd_B$ to the first equation and subtracting them, we have
\[(\extd_B b\underline{\o})c\underline{\o}^{\barnot}\tens \big((b\underline{\t}\ra \extd_A c\underline{\o}^{\baro})c\underline{\t} + (b\underline{\t}\ra c\underline{\o}^{\baro})\extd_B c\underline{\t}\big) - b\underline{\o}\extd_B c\underline{\o}^{\barnot}\tens ((\extd_B b\underline{\t})\ra c\underline{\o}^{\baro})c\underline{\t} = 0\]
which is the $\Omega^1(B)\underline{\tens} \Omega^1(B)$ part of $\underline{\Delta}_*(\extd_B b \extd_B c)$ and thus completes the proof.
\end{proof}

This lemma assists with the data needed for  Theorem~\ref{calcbos}. Finally, we note that $B$ is canonically an  $A\rbiprod B$-comodule algebra  by $\Delta_R b = b\underline{\o}{}^{\barnot}\tens b\underline{\o}{}^{\baro}\tens b\underline{\t}$. 
\begin{corollary}\label{cobosdif}
Under the condition of Theorem \ref{calcbos},  $\Delta_R : B \to B \tens A\rbiprod B$ is differentiable.
\end{corollary}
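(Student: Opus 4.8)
The plan is to obtain $\Delta_{R*}$ by restricting the super coproduct $\Delta_*$ of $\Omega(A\rbiprod B)=\Omega(A)\rbiprod\Omega(B)$, whose strong bicovariance is exactly the content of Theorem~\ref{calcbos}, to the sub-differential-graded-algebra $1\tens\Omega(B)$. First I would record that $\eta\mapsto 1\tens\eta$ embeds $\Omega(B)$ as a super-subalgebra closed under $\extd$: the bosonisation product gives $(1\tens\eta)(1\tens\xi)=1\tens\eta\xi$ and the differential formula of Theorem~\ref{calcbos} gives $\extd(1\tens\eta)=1\tens\extd_B\eta$, so that $1\tens\Omega(B)$ is a sub-DGA on which the differential is $\extd_B$.

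Next, evaluating the super-bosonisation coproduct at $\omega=1$ gives
\[\Delta_*(1\tens\eta)=1\tens\eta\underline{\o}^{\barnot *}\tens\eta\underline{\o}^{\baro *}\tens\eta\underline{\t},\]
where $\underline{\Delta}_*\eta=\eta\underline{\o}\tens\eta\underline{\t}$ is the super braided coproduct of $\Omega(B)$ and $\eta\underline{\o}^{\barnot *}\tens\eta\underline{\o}^{\baro *}$ is the differentiable extension of the $A$-coaction on $\Omega(B)$. The crucial observation is that the first tensor leg is $1\tens\eta\underline{\o}^{\barnot *}$, which has trivial $\Omega(A)$-component and so lies in $1\tens\Omega(B)\cong\Omega(B)$. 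I would therefore define $\Delta_{R*}\eta:=\Delta_*(1\tens\eta)$, read as an element of $\Omega(B)\underline{\tens}\Omega(A\rbiprod B)$. It is degree preserving because $\Delta_*$ is, and it restricts to the stated $\Delta_R b=b\underline{\o}^{\barnot}\tens b\underline{\o}^{\baro}\tens b\underline{\t}$ in degree $0$ since $\underline{\Delta}_*|_B=\underline{\Delta}$ and the extension restricts to the original $A$-coaction.

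The remaining properties then drop out of the corresponding properties of $\Delta_*$. That $\Delta_{R*}$ is a map of superalgebras follows from $\Delta_*$ being one together with the fact that $1\tens(-)$ is a super-algebra embedding. The supercomodule property $(\Delta_{R*}\tens\id)\Delta_{R*}=(\id\tens\Delta_*)\Delta_{R*}$ follows from coassociativity of $\Delta_*$, using that the first leg of $\Delta_*(1\tens\eta)$ is already of the form $1\tens(\ )$ so that applying $\Delta_{R*}$ to it agrees with applying $\Delta_*$ directly. Finally, commutation with $\extd$ is the super-coderivation property of $\Delta_*$ specialised to $1\tens\eta$:
\[\Delta_{R*}\extd_B\eta=\Delta_*\extd(1\tens\eta)=(\extd\tens\id+(-1)^{| \ |}\id\tens\extd)\Delta_*(1\tens\eta),\]
and since $\extd$ restricts to $\extd_B$ on the leg $1\tens\Omega(B)$ this is exactly the differentiability condition (\ref{diff coact}) with $A$ replaced by $A\rbiprod B$.

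The main point requiring care, rather than a genuine obstacle, is verifying that the first leg of $\Delta_*(1\tens\eta)$ never acquires an $\Omega(A)$-component, which is what makes $\Delta_{R*}$ land in $\Omega(B)\underline{\tens}\Omega(A\rbiprod B)$ rather than in the larger $\Omega(A\rbiprod B)\underline{\tens}\Omega(A\rbiprod B)$; this is manifest from the coproduct formula above. Beyond that the work is bookkeeping of signs and identifications, since Theorem~\ref{calcbos} has already done the analytic heart of the matter by establishing that $\Delta_*$ is a degree-preserving super-algebra map obeying the super-coderivation property.
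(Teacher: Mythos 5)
Your proposal is correct and follows essentially the same route as the paper: the paper also defines $\Delta_{R*}\eta=\eta\underline{\o}{}^{\barnot *}\tens\eta\underline{\o}{}^{\baro *}\tens\eta\underline{\t}$ and observes that this is precisely the restriction of the super coproduct $\Delta_*$ of $\Omega(A\rbiprod B)$ to $1\tens\Omega(B)$, with well-definedness and all required properties inherited from Theorem~\ref{calcbos}. Your write-up merely makes explicit the bookkeeping (the first leg staying in $1\tens\Omega(B)$, and the superalgebra, supercomodule and coderivation properties descending from those of $\Delta_*$) that the paper leaves as "it is clear".
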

\begin{proof}
Since $\Delta_{R*} : \Omega(B)\to \Omega(B)\tens \Omega(A)$ and $\underline{\Delta}_*$ are assumed in Theorem \ref{calcbos}, it is clear that the extension now to $\Delta_{R*} \eta = \eta\undo{}^{\barnot *} \tens \eta\undo{}^{\baro *} \tens \eta\undt$ is well-defined and gives a coaction of $\Omega(A\rbiprod B)$ on $\Omega(B)$ as a restriction of the coproduct of $\Omega(A\rbiprod B)$. For example, on degree 1 we have
\[\Delta_{R*}(\extd_B b) = \extd_B b\underline{\o}{}^{\barnot} \tens b\underline{\o}{}^{\baro}  \tens b\underline{\t} + b\underline{\o}{}^{\barnot} \tens \extd_A b\underline{\o}{}^{\baro}  \tens b\underline{\t} + b\underline{\o}{}^{\barnot} \tens b\underline{\o}{}^{\baro}  \tens \extd_B b\underline{\t} .\]
\end{proof}

 \subsection{Construction by transmutation}\label{sectrans} 
 
 It is known that when $A$ is coquastriangular then $A\dcross_\CR A$ in Corollary~\ref{super dcross coquasi} is isomorphic as
 a Hopf algebra to $A\rbiprod \underline{A}$, \cite[Thm~7.4.10]{Ma:book} c.f.\cite{Ma:bos}. Here the transmutation $\underline{A}$ of $A$  \cite{Ma:bra}  has the same coalgebra as $A$ but a modified product $a\bullet b = a\t b\t \CR((Sa\o)a\th, Sb\o)$ for all $a,b\in \underline{A}$ and is a braided-Hopf algebra in $\CM^A_A$   by the right adjoint coaction $\mathrm{Ad}_R a = a\t \tens (Sa\o)a\th$ and an induced right action of $A$ on $\underline{A}$ given by $b\ra a = b^{\barnot}\CR(b^{\overline{\o}},a) = b\t \CR((Sb\o)b\th, a)$  (this follows from a functor $\underline{A} \in \CM^A \hookrightarrow \CM^A_A$, cf. \cite{Ma:dou}\cite[Lemmas~7.4.4,~7.4.8]{Ma:book}). There is also a natural calculus on $\underline{A}$ 
 when $\Omega(A)$ is strongly bicovariant,  as follows. 
 
 \begin{lemma}{\cite[Prop.~8]{Ma:fuz}}\label{lemtransOmegaA}
Let $\Omega(A)$ be a strongly bicovariant exterior algebra on a coquasitriangular Hopf algebra $A$. Its `transmuted'  braided exterior algebra  $\Omega(\underline{A}):=\underline{\Omega(A)}$ has the same coalgebra and exterior derivative as $\Omega(A)$  and a new product 
\[\omega\bullet \eta = \omega\t \eta\t \CR((S\omega\o)\omega\th, S\eta\o)\]
for all $\omega,\eta\in \Omega(\underline A)$,  where $\CR$ is extended by zero on degree $\ge 1$. Moreover, $\Omega(\underline{A})$ is $A\dcross_\CR A$-covariant for the coaction $\Delta_R a = a\t \tens Sa\o \tens a\th$  on $a\in\underline{A}$. 
\end{lemma}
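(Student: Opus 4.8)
The plan is to realise $\Omega(\underline A)$ as the image of $\Omega(A)$ under the super version of Majid's transmutation construction for coquasitriangular Hopf algebras \cite{Ma:bra}, \cite[Thm~7.4.10]{Ma:book}, and then to verify the two features that are genuinely about the differential: that the unchanged $\extd$ is a graded derivation for the transmuted product $\bullet$, and that the stated coaction is covariant.

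First I would extend $\CR$ to $\Omega(A)$ by zero in all positive degrees and check that this makes $\Omega(A)$ a super-coquasitriangular super-Hopf algebra. The two multiplicativity axioms are immediate: since $\CR(\omega,\eta)=0$ unless $|\omega|=|\eta|=0$, both sides of each axiom force all relevant Sweedler legs into degree $0$, and using that $\Delta_*$ is degree preserving and restricts to $\Delta$ there, each reduces to the corresponding axiom for $(A,\CR)$. The substantive check is the quasi-commutativity axiom; here the vanishing of $\CR$ in positive degree turns it into the assertion that the left and right $A$-comodule structures on the forms, together with $\CR$, reproduce the bimodule structure of $\Omega(A)$, which holds because $\Omega(A)$ is bicovariant over the coquasitriangular $A$. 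Granting super-coquasitriangularity, super-transmutation produces a super-braided Hopf algebra $\Omega(\underline A)$ in $\CM^{\Omega(A)}$ with the unchanged super-coalgebra $\Delta_*$, the product $\omega\bullet\eta=\omega\t\eta\t\,\CR((S\omega\o)\omega\th,S\eta\o)$, a braided antipode, and right adjoint coaction $\omega\mapsto\omega\t\tens(S\omega\o)\omega\th$; associativity of $\bullet$, the braided-bialgebra property of $\Delta_*$ and the antipode identities are the classical transmutation computations carried through with signs that are trivialised wherever $\CR$ appears.

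The genuinely new content, and the step I expect to be the main obstacle, is the graded Leibniz rule $\extd(\omega\bullet\eta)=(\extd\omega)\bullet\eta+(-1)^{|\omega|}\omega\bullet\extd\eta$. Since the $\CR$-factor is a scalar, applying $\extd$ to $\omega\bullet\eta=\omega\t\eta\t\,\CR((S\omega\o)\omega\th,S\eta\o)$ leaves $\extd$ acting only on $\omega\t\eta\t$, where the ordinary graded Leibniz rule of $\Omega(A)$ gives $(\extd\omega\t)\eta\t+(-1)^{|\omega\t|}\omega\t\,\extd\eta\t$. On the other side I would expand $(\extd\omega)\bullet\eta$ and $\omega\bullet\extd\eta$ using the super-coderivation property (\ref{super-coderivation}) to spread $\extd$ across $\Delta_*^2\omega$ and $\Delta_*\eta$; the key point is that any term placing $\extd$ on an outer Sweedler leg raises it to positive degree and is killed by $\CR$, so only the terms with $\extd$ in the $\omega\t$ and $\eta\t$ slots survive, and their signs $(-1)^{|\omega\o|}$ and $(-1)^{|\eta\o|}$ drop out because $\CR$ forces $|\omega\o|=|\eta\o|=0$. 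The one surviving sign is $(-1)^{|\omega\t|}=(-1)^{|\omega|}$, so the two sides reassemble into $\extd(\omega\t\eta\t)$ times the same $\CR$-factor, proving the rule. The super-coderivation property of $\extd$ with respect to $\Delta_*$ is inherited unchanged since the coalgebra is unchanged.

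Finally, for covariance under the ordinary Hopf algebra $A\dcross_\CR A$, I would set $\Delta_R\omega=\omega\t\tens(S\omega\o)\tens\omega\th$, read off from the component of $\Delta_*^2\omega$ whose outer two legs lie in degree $0$ (so that, after applying $S$ to the first leg and moving the middle $\Omega(\underline A)$-leg to the front, those two legs form an element of $A\dcross_\CR A$). Coassociativity, the counit condition and the comodule-algebra property for $\bullet$ are the comodule-side manifestation of $A\dcross_\CR A\cong A\rbiprod\underline A$ \cite[Thm~7.4.10]{Ma:book}, \cite{Ma:bos} and follow from the super-Hopf axioms of $\Omega(A)$ and the coquasitriangularity above. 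Compatibility with the differential, $\Delta_R\extd\omega=(\extd\tens\id)\Delta_R\omega$, is then immediate from (\ref{super-coderivation}): requiring both outer legs of $\Delta_*^2\extd\omega$ to stay in degree $0$ annihilates every term except the one with $\extd$ on the middle leg, namely $\extd(\omega\t)\tens(S\omega\o)\tens\omega\th$, with trivial sign. This shows $\Omega(\underline A)$ is $A\dcross_\CR A$-covariant.
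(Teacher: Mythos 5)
There is a genuine gap, and it is at the foundation of your argument: the claim that extending $\CR$ by zero on degree $\ge 1$ makes $\Omega(A)$ super-coquasitriangular is false in general, and the paper itself goes out of its way to say so. Your dismissal of the quasi-commutativity axiom is where this goes wrong. Take $\omega=\extd a$ and $\eta=b$ with $a,b\in A$: since $\CR$ kills any leg of positive degree, the axiom does not become vacuous but instead reduces to
\[
\CR(a\o,b\o)\,(\extd a\t)\, b\t \;=\; b\o\,(\extd a\o)\,\CR(a\t,b\t),
\]
i.e.\ it forces the bimodule relations of $\Omega^1(A)$ to be \emph{governed by} $\CR$. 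This is an extra condition on the calculus, not a consequence of bicovariance: a coquasitriangular Hopf algebra has many bicovariant calculi, and only special ones satisfy it. The paper makes this precise in Proposition~\ref{isomAbowtie}, where super-coquasitriangularity of the zero extension is shown to be equivalent to differentiability of the isomorphism $A\dcross_\CR A\cong A\rbiprod\underline A$ (so it is a nontrivial extra hypothesis), and in Example~\ref{ex dcross B(R)}, where it is checked to fail for the FRT calculus of Lemma~\ref{FRTcalc}: super-coquasitriangularity would force $(\extd \bt_1)\bt_2=R_{21}\bt_2\extd\bt_1R_{21}^{-1}$, whereas the calculus has $(\extd \bt_1)\bt_2=R_{21}\bt_2\extd\bt_1R$, and these agree only when $R$ is involutive. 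Consequently everything you derive from super-transmutation of $\Omega(A)$ over itself --- associativity of $\bullet$, the super-braided Hopf structure in $\CM^{\Omega(A)}_{\Omega(A)}$, and the covariance under $A\dcross_\CR A$ --- rests on a structure that does not exist for general $\Omega(A)$.

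The paper's proof sidesteps this entirely: it applies \emph{relative} transmutation with respect to the super-Hopf algebra projection $\pi:\Omega(A)\to A$ that kills degree $\ge 1$. This construction needs a coquasitriangular structure only on the target $A$ (where $\CR$ genuinely lives), not on $\Omega(A)$, and produces $\underline{\Omega(A)}$ as a super-braided Hopf algebra in $\CM^A$ with exactly the stated product once $\CR$ is extended by zero. Covariance under $A\dcross_\CR A$ is then obtained by viewing $\underline A$ as a comodule-algebra cotwist by a dual $2$-cocycle on $A^{\op}\tens A$: bicovariance of $\Omega(A)$ means it is covariant under the coaction $a\mapsto a\t\tens Sa\o\tens a\th$ of $A^{\op}\tens A$, the calculus cotwists to $\Omega(\underline A)$, and $A^{\op}\tens A$ Drinfeld-cotwists to $A\dcross_\CR A$. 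For what it is worth, your verification of the graded Leibniz rule for $\bullet$ is sound as a standalone computation (it uses only the vanishing of $\CR$ in positive degree and the super-coderivation property), but it cannot rescue the proposal, since the product whose Leibniz rule you check has not been shown to be associative, let alone part of a braided Hopf structure, without the false premise.
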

\begin{proof} (i) As in \cite{Ma:fuz}, we start with the super Hopf algebra map $\pi: \Omega(A)\to A$ given by projecting out degree $\ge 1$. With respect to this, $\Omega(A)$ relatively transmutes to a super-braided-Hopf algebra $\underline{\Omega(A)}$ in $\CM^A$. Here, \cite{Ma:fuz} writes its structure in terms of left-invariant forms according to the description $\Omega(A)=A\rbiprod\Lambda$, but we do not particularly need that here.
Rather, applying transmutation with respect to $\pi$ directly  gives the new product
$\omega\bullet\eta=\omega^{\barnot} \eta\t \CR(\omega^{\baro}, S\pi(\eta\o))$, where the coaction is the right super adjoint coaction of $\Omega(A)$ on itself pushed out along $\pi$. This coaction extends the right adjoint coaction of $A$ on $\underline A$ to one on $\Omega(\underline A)=\underline{\Omega(A)}$. If we extend $\CR$ by zero then this product is equivalent to the formula stated.

(ii) One can also view  $\underline{A}$  as a comodule algebra cotwist for a certain dual 2-cocycle on $A^{\rm op}\tens A$ built from $\CR$, where this Hopf algebra coacts on $A$ by $\Delta_R$ as stated. $\Omega(A)$ being bicovariant means that it is covariant under $\Delta_R$ and comodule algebra cotwists to $\Omega({\underline A})$, while the Hopf algebra $A^{\rm op}\tens A$ Drinfeld-cotwists to $A\dcross_\CR A$. We refer to \cite{Ma:fuz}\cite[Sect.~10.5.3]{Ma:book} for details. \end{proof} 

First we consider the special case where extending $\CR$ by zero on higher degree $\ge 1$ makes $\Omega(A)$ super coquasitriangular. In this case, we can view $\Omega(\underline{A})$ more simply as a super version of the construction of $\underline{A}$,  now giving us a super braided-Hopf algebra in $\CM^{\Omega(A)}_{\Omega(A)}$ by the adjoint coaction and an induced action at the $\Omega(A)$ level. The super bosonisation $\Omega(A) \rbiprod \Omega(\underline{A})$ along the lines of \cite[Prop.~7.4.10]{Ma:book} but now with signs, provides a strongly bicovariant  exterior algebra on $A\rbiprod \underline{A}$ with
\begin{align}\label{cobosOmegaa}
(\omega \tens \eta)(\tau \tens \xi)
&= (-1)^{|\eta||\tau|}\omega \tau\o \tens \eta\t \xi\t \CR((S\eta\o)\eta\th,\tau\t S\xi\o)\\ \label{cobosOmegab}
\Delta_*(\omega \tens \eta) &= (-1)^{|\eta\t|(|\omega\t|+|\eta\o|)} \omega\o \tens \eta\t \tens \omega\t(S\eta\o)\eta\th \tens \eta\fo\\
\label{cobosOmegac}\extd(\omega\tens\eta)&=\extd_A\omega\tens\eta+ (-1)^{|\omega|}\omega\tens\extd_B\eta
\end{align}
for all $\omega, \tau \in \Omega(A)$ and $\eta, \xi \in \Omega(\underline{A})$. It is clear that this could also be viewed as an example of Theorem~\ref{calcbos}.  Corollary~\ref{cobosdif} then tells us that $A\rbiprod\underline A$ coacts differentiably on $\underline A$ under our assumption. The coaction here corresponds to the coaction in the lemma under the isomorphism of $A\dcross_\CR A$ with $A\rbiprod\underline A$. 

If we do not assume that the extension of $\CR$ by zero makes $\Omega(A)$ super coquasitriangular, we still have a natural cross product differential graded algebra $\Omega(A\rbiprod \underline A):=\Omega(A) \rbiprod \Omega(\underline{A})$  defined by (\ref{cobosOmegaa})--(\ref{cobosOmegac}) because  $\Omega(\underline A)$ is a right $A$-module algebra by construction and this pulls back under $\pi$ to a right action of $\Omega(A)$. One can check that $\extd$ is still a super derivation. Likewise the coalgebra is just the cross coproduct by the super right adjoint coaction, and the proof that $\extd$ is a supercoderivation is the same as in Theorem~\ref{calcbos}. Thus, the main difference is that the algebra and coalgebra do not necessarily fit together as a super Hopf algebra, but something more general. Similarly,  from the same data of a strongly bicovariant calculus on a coquasitriangular Hopf algebra, we can set $\Omega(A\dcross_\CR A):=\Omega(A)\dcross_\CR \Omega(A)$ by Corollary~\ref{super dcross coquasi} as a canonical strongly bicovariant calculus on $A\dcross_\CR A$. 

\begin{proposition}\label{isomAbowtie} Let $\Omega(A)$ be a strongly bicovariant exterior algebra on a coquasitriangular Hopf algebra $A$. The Hopf     algebra isomorphism $\varphi:A\dcross_\CR A\cong A\rbiprod\underline A$ defined by $\varphi(a\tens b)=ab\o\tens b\t$ for $a,b\in A$ is differentiable for the stated exterior algebras if and only if the extension of $\CR$ by zero makes $\Omega(A)$ super coquasitriangular. 
 \end{proposition}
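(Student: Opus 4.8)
The plan is to produce the \emph{unique} candidate for a DGA extension of $\varphi$ and then to show that its multiplicativity is controlled precisely by the quasicommutativity axiom of super coquasitriangularity. First I would define $\varphi_*:\Omega(A)\dcross_\CR\Omega(A)\to\Omega(A)\rbiprod\Omega(\underline A)$ by the naive super-analogue of $\varphi$, namely $\varphi_*(\omega\tens\eta)=\omega\eta\o\tens\eta\t$ using the super-coproduct $\Delta_*\eta=\eta\o\tens\eta\t$ of $\Omega(A)$, where $\omega\eta\o$ is the product in $\Omega(A)$ and $\eta\t$ sits in the $\Omega(\underline A)$ factor (recall $\underline A$ has the same coalgebra as $A$). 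This is degree-preserving since $\Delta_*$ preserves total degree, it restricts to $\varphi$ in degree $0$, and a short computation using the super-coderivation property (\ref{super-coderivation}) and the graded Leibniz rule shows $\extd\varphi_*=\varphi_*\extd$ for the differentials of Corollary~\ref{super dcross coquasi} and (\ref{cobosOmegac}); here one uses that $\Omega(\underline A)$ carries the same $\extd$ as $\Omega(A)$ by Lemma~\ref{lemtransOmegaA}, and the two resulting expansions of $\extd(\omega\eta\o\tens\eta\t)$ match with no extra sign.

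Next I would establish uniqueness, so that differentiability of $\varphi$ becomes \emph{equivalent} to $\varphi_*$ being an algebra map. The algebra $\Omega(A)\dcross_\CR\Omega(A)$ is generated by the degree-$0$ elements $a\tens 1,\ 1\tens b$ together with their exterior derivatives $\extd(a\tens1),\ \extd(1\tens b)$, because $\omega\tens1$ and $1\tens\eta$ each span copies of $\Omega(A)$ and $\omega\tens\eta=(\omega\tens1)(1\tens\eta)$. Hence any DGA extension $\Phi$ of $\varphi$ is determined on these generators: $\Phi(\omega\tens1)=\omega\tens1$ and, by $\extd$-commutativity applied to $\varphi(1\tens b)=\Delta b$, one gets $\Phi(1\tens\eta)=\Delta_*\eta=\eta\o\tens\eta\t$. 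Multiplicativity of $\Phi$ then forces $\Phi(\omega\tens\eta)=\Phi(\omega\tens1)\Phi(1\tens\eta)=\varphi_*(\omega\tens\eta)$, so $\Phi=\varphi_*$ necessarily. Thus $\varphi$ is differentiable if and only if this single fixed map $\varphi_*$ is an algebra homomorphism.

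I would then dispose of the easy half of the target equivalence: among the three super coquasitriangularity axioms for $\CR$ extended by zero on degrees $\ge 1$, the two bialgebra-type axioms hold automatically. Indeed, in each Sweedler term $|\tau\o|+|\tau\t|=|\tau|$, so whenever $|\tau|\ge1$ at least one factor has positive degree and is killed by $\CR$; a short degree count shows both sides of those axioms vanish unless every argument has degree $0$, where they reduce to the given axioms on $A$. Consequently the only substantive condition encoded in ``$\Omega(A)$ super coquasitriangular'' is the (super) quasicommutativity axiom, which, because $\CR$ sees only degree $0$, becomes a relation between products in $\Omega(A)$ and the outer (coaction) components $\Delta_L,\Delta_R$ of $\Delta_*$ on forms.

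Finally, the core computation: I would expand $\varphi_*\big((\omega\tens\eta)(\tau\tens\xi)\big)$ using the $\dcross_\CR$ product of Section~\ref{secAdcrossA} and compare with $\varphi_*(\omega\tens\eta)\,\varphi_*(\tau\tens\xi)$ using the $\rbiprod$ product (\ref{cobosOmegaa}), simplifying both by the two automatic axioms. I expect the two sides to coincide exactly up to the super-quasicommutativity relation for $\CR$, mirroring the classical isomorphism $A\dcross_\CR A\cong A\rbiprod\underline A$ of \cite[Thm.~7.4.10]{Ma:book} but with the extra braiding signs. For the forward implication this shows super coquasitriangularity makes $\varphi_*$ multiplicative; for the converse, specialising to $\omega=\xi=1$ (so that $(1\tens\eta)(\tau\tens1)$ is computed both ways) isolates precisely the quasicommutativity identity, so that multiplicativity of $\varphi_*$ forces it. The main obstacle I anticipate is the sign- and Sweedler-index bookkeeping in this last comparison, and cleanly matching the identity that emerges to the super form of the third axiom after the degree-$0$ restrictions imposed by $\CR$.
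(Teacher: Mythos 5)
Your proposal is correct and takes essentially the same route as the paper: it fixes the same canonical candidate $\varphi_*(\omega\tens\eta)=\omega\eta\o\tens\eta\t$, shows this extension is forced (the paper by induction on degree, you by generation in degrees $0,1$ -- the same substance), and extracts the missing quasicommutativity axiom from multiplicativity on products of the form $(1\tens\eta)(\tau\tens 1)$, exactly as the paper does. The one detail to keep in mind when you carry out the final bookkeeping is the paper's trick of applying $\id\tens\underline{\eps}$ and using the skew-pairing identities to collapse one side to $\eta\omega$, which reduces the comparison cleanly to the axiom $\CR(\eta\o,\omega\o)\eta\t\omega\t=(-1)^{|\eta||\omega|}\omega\o\eta\o\CR(\eta\t,\omega\t)$.
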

\begin{proof} Suppose $\Omega(A)$ is super coquasitriangular then the super version of the bosonisation theory behind $\varphi$ tells us that $\Omega(A)\dcross_\CR \Omega(A)\cong \Omega(A)\rbiprod \Omega(\underline A)$ as super Hopf algebras by $\varphi_*(\omega\tens\eta)=\omega\eta\o\tens\eta\t$ for all $\omega,\eta\in\Omega(A)$. One can check that this is compatible with $\extd$ on both sides. Conversely, suppose that $A$ is coquasitriangular and that $\varphi$ extends to a map $\varphi_*$ of DGAs. Induction on the degree shows that $\varphi_*$ has to have this same form as follows. Let $\varphi_*(\omega\tens 1)=\omega\tens 1$ and $\varphi_*(1\tens\eta)=\eta\o\tens\eta\t$ up to some degree. Then $\varphi_*(a\extd_A\omega\tens 1)=\varphi_*(a)\extd\varphi_*(\omega)=a\extd_A\omega\tens 1$, so the same is true one degree higher. Similarly 
\begin{align*}\varphi_*(1\tens a\extd_A\eta)&=\varphi_*(1\tens a)\extd\varphi_*(1\tens\eta)=(a\o\tens a\t)(\extd_A\eta\o\tens\eta\t+(-1)^{|\eta\o|}\eta\o\tens\extd_A\eta\t)\\
&=a\o\extd_A\eta\o\tens a\t\eta\t+ (-1)^{|\eta\o|}a\o\eta\o\tens\extd_A \eta\t =\Delta_*(a\extd_A\eta)
\end{align*}
as required. The 3rd equality uses the product (\ref{cobosOmegaa}) for each term, but the $\CR$ factors collapse. 

Next, $\varphi_*$ is already  constructed so that it commutes with $\extd$, but one can also check this directly from its stated form. Finally, since $\varphi_*$ is assumed to be a super-algebra map,  the expressions
\begin{align*}
\varphi_*((1\tens \eta)(\omega \tens 1)) =& (-1)^{|\eta||\omega|}\CR(S\eta\o, \omega\o)\phi_*(\omega\t \tens \eta\t)) \CR(\eta\th, \omega\th)\\
=&(-1)^{|\eta\t||\omega\t|+|\eta\th||\omega\t|}\CR(S\eta\o, \omega\o) \omega\t\eta\t \tens \eta\th \CR(\eta\fo,\omega\th), 
\end{align*}
where $|\eta| = |\eta\t|+|\eta\th|$ and $|\omega| = |\omega\t|$, must coincide with
\begin{align*}
\varphi_*(1\tens \eta)\varphi_*(\omega \tens 1)=&(\eta\o \tens \eta\t)(\omega\tens 1)\\
=&(-1)^{|\eta\th||\omega\o|}\eta\o \omega\o \tens \eta\th \CR(S\eta\t, \omega\t)\CR(\eta\fo, \omega\th), 
\end{align*}
where $|\omega\o| = |\omega|$. Applying $\id \tens \underline{\epsilon}$ to both, we obtain
\begin{align*}
\eta\o \omega\o \CR(S\eta\t, \omega\t)\CR(\eta\th, \omega\th) = (-1)^{|\eta\t||\omega\t|}&\CR(S\eta\o,\omega\o) \omega\t\eta\t \CR(\eta\th,\omega\th),
\end{align*}
where the left hand side collapses by the skew pairing properties of $\CR$ to $\eta\omega$. This is then equivalent to the missing super coquasitriangularity axiom, 
\[ \CR(\eta\o,\omega\o)\eta\t\omega\t= (-1)^{|\eta||\omega|}  \omega\o\eta\o \CR(\eta\t,\omega\t)\]  
on noting for the sign that the arguments of $\CR$ have to have degree 0. 
 \end{proof}

We see that, aside from the special case where $\CR$ extends to a super coquasitriangular structure,  the Hopf algebras $A\dcross_\CR A$ and $A\rbiprod \underline A$ are isomorphic but not differentiably (not diffeomorphic) for their specified differential structures.  Similarly,  the coaction of either Hopf algebra on $\underline{A}$ is not necessarily differentiable outside of the special case.  Note, however, that  strongly bicovariant $\Omega(A)$ in the case of $A$ coquasitriangular  can be constructed from $\CR$ and a choice of subcoalgebra in $A$, see  $\CR$\cite{Ma:hod}, and since $\CR_{21}^{-1}$ is also a coquasitriangular structure on $A$, it means that for each such $\Omega(A)$, there is a `conjugate' one $\Omega'(A)$. Therefore we also have a natural variant $\Omega(A\dcross_\CR A):=\Omega'(A)\dcross_\CR\Omega(A)$ by Corollary~\ref{super dcross coquasi} and we conclude by showing in our $R$-matrix setting that with this choice, the coaction is indeed differentiable. 

\begin{example}\label{ex dcross B(R)} Let $A=A(R)[D^{-1}]$ with $R$ $q$-Hecke as in Example~\ref{ex dcross A(R)}. The transmutation $\underline{A}$ consists of the braided matrices $B(R)$ with matrix  $\bu= (u^i{}_j)$ of generators and the braided-matrix relations $R_{21}\bu_1 R\bu_2=\bu_2R_{21}\bu_1R$ as in \cite{Ma:book}, now with $D^{-1}$ adjoined. Here $D$, being grouplike in $A$, is necessarily central as a consequence of the braided commutativity of the transmuted product \cite[Ex~9.4.10]{Ma:book}\cite{Ma:hod} and bosonic (it has trivial braiding with other elements) due to triviality of the right adjoint coaction on $D$. The calculus $\Omega(\underline{A})$ by transmutation of Lemma~\ref{FRTcalc} has bimodule and exterior algebra relations  $\bu_2R_{21}\extd\bu_1 R = R^{-1}\extd \bu_1 R \bu_2$ and $\extd \bu_2R_{21}\extd\bu_1 R =- R^{-1}\extd\bu_1 R\extd \bu_2$ at the level of $B(R)$ cf.\cite[Chapter~10.5.3]{Ma:book}  (the latter was more precisely the transmutation of the conjugate calculus where  $R$ is replaced by $R_{21}^{-1}$). This follows from transmutation formulae in $R\bt_1\bt_2=\bu_1 R \bu_2$ as in \cite[Eqn. (7.37)]{Ma:book} relating the products on the two sides, and the same applies with $\extd$ in the same position on each side. The transmuted exterior algebra is then a comodule algebra under $A\dcross_\CR A$ and the new part is when this is differentiable. For either of the cases in Example~\ref{ex dcross A(R)}, the coaction of $\Omega(A\dcross_{\CR}A)$ on $\Omega(\underline{A})$ as a super-comodule algebra is given by
\[\Delta_{R}\bu = \bu \tens S\bs \tens \bt, \quad \Delta_{R*}\extd \bu = \extd \bu \tens S\bs \tens \bt + \bu \tens \extd S\bs \tens \bt + \bu \tens S\bs \tens \extd \bt.\]
We use the same compact $R$-matrix methods and a notation where we omit $\tens$ but remember that $\bu$ and its differentials commute with $\bs,\bt$ and their differentials. We also note that $S \bs=\bs^{-1}$ so that $\Delta_R\bu=\bs^{-1}(\extd\bu) \bt$. Moreover, a little work using commutation relations in Lemma~\ref{FRTcalc} and Example~\ref{ex dcross A(R)} shows that  $\extd \bs^{-1}=-\bs^{-1}(\extd\bs) \bs^{-1}$ obeys
the useful identities 
\[ \bs_2^{-1}  R  \bt_1=\bt_1R \bs_2^{-1} ,\quad \bt_2 R_{21}\extd \bs_1^{-1}=\extd \bs_1^{-1} R_{21}\bt_2,\quad \bs_2^{-1} R \extd\bt_1=\extd\bt_1R \bs_2^{-1}\]
 (from which one can verify that $\Omega(\underline{A})$  is covariant under $A\dcross_\CR A$, as it must be by Lemma~\ref{lemtransOmegaA}).  Also, depending on the calculus used for $\bs$, we have
\[{\rm Case\ (i)}:\ R_{21}\extd \bs_1^{-1}\bs_2^{-1}=\bs_2^{-1}\extd \bs_1^{-1}R^{-1},\quad {\rm Case\ (ii)}:\  \bs_2^{-1}\extd \bs_1^{-1}R_{21}=R^{-1}\extd \bs_1^{-1}\bs_2^{-1}.\]
Using these, one can check  that the coaction {\em is} always differentiable in Case (ii) but  {\em is not} differentiable in general in Case (i) (unless $R$ is involutive, when this is the same as Case (ii)). For example, in Case (ii), 
\begin{align*} \delta_R(\bu_2R_{21}\extd\bu_1 R)&=\bs_2^{-1}\bu_2\bt_2R_{21}\extd\bs_1^{-1}\bu_1\bt_1R +\bs_2^{-1}\bu_2\bt_2R_{21}\bs_1^{-1} \bu_1\extd\bt_1 R\\
&=\bs_2^{-1}\extd\bs_1^{-1}\bu_2R_{21} \bu_1\bt_2\bt_1 R+\bs_2^{-1}\bs_1^{-1}\bu_2R_{21} \bu_1\bt_2\extd\bt_1 R\\
&=\bs_2^{-1}\extd\bs_1^{-1}\bu_2R_{21} \bu_1R\bt_1\bt_2 +\bs_2^{-1}\bs_1^{-1}\bu_2R_{21} \bu_1R_{21}^{-1}\extd\bt_1 \bt_2\\
&=\bs_2^{-1}\extd\bs_1^{-1}R_{21}\bu_1 R\bu_2  \bt_1\bt_2 + \bs_2^{-1}\bs_1^{-1}R^{-1}\bu_1R \bu_2\extd\bt_1 \bt_2 \\
&=R^{-1}\extd\bs_1^{-1}\bu_1\bs_2^{-1}  R  \bt_1\bu_2\bt_2+R^{-1}\bs_1^{-1}\bu_1\bs_2^{-1} R \extd\bt_1\bu_2 \bt_2\\
&=R^{-1}\extd\bs_1^{-1}\bu_1 \bt_1R \bs_2^{-1} \bu_2 \bt_2+R^{-1}\bs_1^{-1}\bu_1 \extd\bt_1R \bs_2^{-1} \bu_2 \bt_2=
\delta_R(R^{-1}\extd \bu_1 R \bu_2)
 \end{align*}
which, given covariance, shows differentiability at degree 1. Meanwhile, $\Omega(A(R))$ being super coquasitriangular entails $R\bt_1\extd\bt_2=\extd \bt_2\bt_1 R$ or equivalently $(\extd \bt_1)\bt_2=R_{21}\bt_2\extd\bt_1R_{21}^{-1}$ which does not in general agree with our relations in Lemma~\ref{FRTcalc} (unless we are in the involutive case, when Case (i) is also differentiable).  \end{example}

For the $GL_2$ $R$-matrix and a suitable $*$-structure over $\C$, $B(R)$ becomes $q$-Minkowski space as explained in \cite[Chapter~10.5.3]{Ma:book}. The usual $q$-determinant $D$ of $A(R)$ in this case, viewed in $B(R)$, is the braided $q$-determinant.  In fact, we can take any normalisation of $R$ for the present purposes and if we fix a square root of $q$ then we can normalise so that $\CR$ descends to the quotient $D=1$ and accordingly take $A=\C_q[SU_2]$. In this context, $A\dcross_\CR A$ becomes a version of the $q$-Lorentz group. Hence the coaction of the latter at least on the $q$-hyperboloid in $q$-Minkowski space (the braided $q$-determinant 1 quotient) is not differentiable if we take the uniform choice of $\Omega(A\dcross_\CR A)$, but is if we use the conjugate coquasitriangular structure for the left factor. It is worth noting that $A\dcross_\CR A$ is itself coquasitriangular in two natural ways namely $\CR_L$, which restricts to $\CR$ on both factors, and $\CR_D$, which restricts to $\CR_{21}^{-1}$ on the first factor and $\CR$ on the second,  see \cite[Prop. 7.3.1]{Ma:book}. 

\subsection{Exterior algebra on $A\rbiprod V(R)$} Braided linear spaces such as the `quantum plane' provide simpler  $q$-deformed examples of Section~\ref{secbosthm}  and we will compute some of these in detail. We start with general comments at the level of $A=A(R)[D^{-1}]$ with assumed central grouplike $D$ inverted and  $\Omega(A)$ from Lemma~\ref{FRTcalc}. We take the same $\Delta_R{\bx}=\bx\tens\bt$ on $V(R)$ as in Lemma \ref{diff coact A(R) on V(R)}, but now as a coaction of $A$. 

\begin{theorem}\label{calcbos q-Hecke}
Let $A=A(R)[D^{-1}]$ with $R$ $q$-Hecke and the associated $V(R)$ right-covariant braided linear space. Then $\Omega(V(R))$ is a super-braided-Hopf algebra with $x_i,\extd x_i$ primitive in the crossed module category $\CM^{\Omega(A)}_{\Omega(A)}$ with coaction $\Delta_{R*}$ as in Lemma~\ref{diff coact A(R) on V(R)} and 
\[\bx_1 \ra \bt_2 = \bx_1 q^{-1}R_{21}^{-1}, \quad (\extd\bx_1) \ra \bt_2 = \extd \bx_1 q^{-1}R, \quad \bx_1 \ra\extd \bt_2 = (q^{-2}-1)\extd \bx_1 P, \quad \extd  \bx_1 \ra \extd \bt_2 = 0, \]
where $P$ is a permutation matrix. This defines a strongly bicovariant exterior algebra $\Omega(A\rbiprod V(R)):=\Omega(A)\rbiprod \Omega(V(R))$ with relations, coproducts, and antipodes
\[\bx_1 \bt_2 = \bt_2 \bx_1 q^{-1}R_{21}^{-1}, \quad (\extd\bx_1)\bt_2 = \bt_2 \extd \bx_1 q^{-1}R, \quad \bx_1 \extd \bt_2 = (\extd \bt_2)\bx_1 q^{-1}R_{21}^{-1} + (q^{-2}-1)\bt_2\extd \bx_1 P \]
\[\extd\bx_1 \extd \bt_2 = -\extd \bt_2\extd \bx_1 q^{-1}R, \quad \Delta \bx = 1\tens \bx + \bx\tens \bt, \quad \Delta_* \extd \bx = 1\tens \extd\bx + \extd \bx \tens \bt + \bx \tens \extd \bt\] 
\[S\bx = -\bx S\bt, \quad S\extd\bx = -(\extd\bx)S\bt - \bx S\extd \bt.\]
\end{theorem}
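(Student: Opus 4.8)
The plan is to exhibit $\Omega(V(R))$ as a braided strongly bicovariant exterior algebra in $\CM^{\Omega(A)}_{\Omega(A)}$ in the sense of Section~\ref{secbosthm}, whereupon Theorem~\ref{calcbos} immediately produces $\Omega(A)\rbiprod\Omega(V(R))$ as a strongly bicovariant exterior algebra on $A\rbiprod V(R)$ with the stated differential. The degree-$0$ ingredients are already available: $V(R)$ is an additive braided Hopf algebra in $\CM^A_A$ with primitive $\bx$, coaction $\Delta_R\bx=\bx\tens\bt$, and the crossed-module action $\bx_1\ra\bt_2=\bx_1 q^{-1}R_{21}^{-1}$ induced by $\CR(\bt_1\tens\bt_2)=qR$; and by Lemma~\ref{diff coact A(R) on V(R)} this coaction is differentiable with $\Delta_{R*}\extd\bx=\extd\bx\tens\bt+\bx\tens\extd\bt$. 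Thus what remains is to build the module (rather than comodule) structure and check that the two fit together super-covariantly.

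First I would pin down the action. The $\bt$-action on $\extd\bx$ is fixed by demanding that $\ra$ respect the bimodule relation $(\extd\bx_1)\bx_2=\bx_2\extd\bx_1 qR$, i.e. that $\Omega(V(R))$ be an $A$-module algebra; an $R$-matrix computation using the Hecke identity $R_{21}=R^{-1}+(q-q^{-1})P$ then yields $(\extd\bx_1)\ra\bt_2=\extd\bx_1 q^{-1}R$. With $\Omega(V(R))$ now an $A$-module algebra and $\Omega(A)$ the maximal prolongation of Lemma~\ref{FRTcalc}, Lemma~\ref{diff right action} both guarantees a well-defined differentiable extension to $\Omega(A)$ and computes it: from $\bx\ra\extd\bt=\extd_B(\bx\ra\bt)-(\extd_B\bx)\ra\bt=q^{-1}\extd\bx_1(R_{21}^{-1}-R)$ together with $R_{21}^{-1}-R=(q^{-1}-q)P$ (a rearrangement of $PR-(PR)^{-1}=q-q^{-1}$) one gets $\bx_1\ra\extd\bt_2=(q^{-2}-1)\extd\bx_1 P$, while $\extd\bx\ra\extd\bt=-\extd(\extd\bx\ra\bt)=0$ since $\extd^2=0$. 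Finally I would verify the hypothesis of Lemma~\ref{extension ra}(i), namely $\Delta_{R*}((\extd_B b)\ra a)=(\extd_B b^{\barnot})\ra a\t\tens(Sa\o)b^{\baro}a\th+b^{\barnot}\ra a\t\tens Sa\o(\extd_A b^{\baro})a\th$ on $b=\bx,\ a=\bt$, which promotes $\Omega(V(R))$ to a super $\Omega(A)$-crossed module algebra.

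On the coalgebra side, $\bx$ and $\extd\bx$ are primitive for the additive braided-Hopf structure, so the super-braided coproduct $\underline{\Delta}_*$ is primitive on generators; since $\Omega(V(R))$ is the maximal prolongation of $\Omega^1(V(R))$, Lemma~\ref{extension ra}(ii) certifies that $\underline{\Delta}_*$ is well defined and that $\extd_B$ is a super-coderivation. This supplies conditions (i)--(iv) of Section~\ref{secbosthm}, so Theorem~\ref{calcbos} applies. The explicit cross relations then drop out of the bosonisation product $(\omega\tens\eta)(\tau\tens\xi)=(-1)^{|\eta||\tau\o|}\omega\tau\o\tens(\eta\ra\tau\t)\xi$ evaluated on $(1\tens\bx)(\bt\tens1)$, $(1\tens\extd\bx)(\bt\tens1)$, $(1\tens\bx)(\extd\bt\tens1)$ and $(1\tens\extd\bx)(\extd\bt\tens1)$, where for the last two one expands $\Delta_*\extd\bt=\extd\bt\tens\bt+\bt\tens\extd\bt$ and feeds in the four action formulas; the coproducts $\Delta\bx=1\tens\bx+\bx\tens\bt$ and $\Delta_*\extd\bx=1\tens\extd\bx+\extd\bx\tens\bt+\bx\tens\extd\bt$ are the bosonisation coproduct on the primitive generators with their coactions; and the antipodes come from the super-Hopf structure together with $\extd S=S\extd$, giving $S\bx=-\bx S\bt$ and hence $S\extd\bx=\extd(S\bx)=-(\extd\bx)S\bt-\bx S\extd\bt$.

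I expect the main obstacle to be the $R$-matrix bookkeeping, concentrated in showing that the proposed action is genuinely well defined on $\Omega(V(R))$ --- i.e. that it annihilates the relations $q\bx_1\bx_2=\bx_2\bx_1 R$, $(\extd\bx_1)\bx_2=\bx_2\extd\bx_1 qR$ and $\extd\bx_1\extd\bx_2=-\extd\bx_2\extd\bx_1 qR$ --- and in verifying the degree-$1$ crossed-module identity of Lemma~\ref{extension ra}(i). Both reduce to the same Hecke manipulations as in Lemmas~\ref{FRTcalc} and~\ref{diff coact A(R) on V(R)}; the only conceptual subtlety is that $\extd\bx$ carries the $\bt$-action $q^{-1}R$ whereas $\bx$ carries $q^{-1}R_{21}^{-1}$, and it is precisely this mismatch that makes the mixed relation $\bx_1\extd\bt_2=(\extd\bt_2)\bx_1 q^{-1}R_{21}^{-1}+(q^{-2}-1)\bt_2\extd\bx_1 P$ nontrivial.
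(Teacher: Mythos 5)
Your overall strategy is the same as the paper's: exhibit $\Omega(V(R))$ as a braided strongly bicovariant calculus in $\CM^{\Omega(A)}_{\Omega(A)}$ by checking the hypotheses of Lemma~\ref{extension ra}(i)--(ii), invoke Theorem~\ref{calcbos}, and read off the cross relations, coproducts and antipodes from the super-bosonisation formulae. Your derivation of $\bx_1\ra\extd\bt_2=(q^{-2}-1)\extd\bx_1 P$ and $\extd\bx_1\ra\extd\bt_2=0$ from Lemma~\ref{diff right action} together with $R_{21}^{-1}-R=(q^{-1}-q)P$ is correct and reproduces the paper's formulae (though note that the well-definedness of the extension to $\Omega^1(A)$ is a \emph{hypothesis} of that lemma, so it still requires a check against the relations $(\extd\bt_1)\bt_2=R_{21}\bt_2\extd\bt_1R$ of Lemma~\ref{FRTcalc}).

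There is, however, one step that is wrong as stated: the claim that $(\extd\bx_1)\ra\bt_2=\extd\bx_1\,q^{-1}R$ is \emph{fixed} by requiring $\Omega(V(R))$ to be an $A$-module algebra. Module-algebra consistency does not determine this action. Writing the unknown action as $(\extd\bx_1)\ra\bt_3=\extd\bx_1 M_{13}$, compatibility with the relation $(\extd\bx_1)\bx_2=\bx_2\extd\bx_1\,qR$ is the single linear condition $R_{12}M_{13}(R_{21}^{-1})_{23}=(R_{21}^{-1})_{23}M_{13}R_{12}$; for $M_{13}=q^{-1}R_{13}$ this reads $R_{32}R_{12}R_{13}=R_{13}R_{12}R_{32}$, while for the rival choice $M_{13}=q^{-1}(R_{21}^{-1})_{13}$ (the same matrix as on $\bx$) it reads $R_{31}R_{32}R_{12}=R_{12}R_{32}R_{31}$ --- and \emph{both} are relabellings of the Yang--Baxter equation, hence both hold; the same is true for the degree-$2$ relation and for the FRT relations of $A$, and by linearity even $M_{13}=P_{13}$ passes the degree-$1$ test. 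What actually forces $q^{-1}R$ is the super crossed-module axiom of Lemma~\ref{extension ra}(i): in the crossed-module formula for $\Delta_{R*}((\extd\bx_1)\ra\bt_2)$, the term $\bx_1\ra\bt_2\tens(S\bt_2)(\extd\bt_1)\bt_2$ yields $\bx_1\tens\extd\bt_1\,q^{-1}R$ because the $R_{21}^{-1}$ coming from the action on $\bx$ cancels against the $R_{21}$ in $(\extd\bt_1)\bt_2=R_{21}\bt_2\extd\bt_1R$, and this must coincide with the action matrix appearing when $\Delta_{R*}$ is applied directly to the element $\extd\bx_1 M$; with $M=q^{-1}R_{21}^{-1}$ the two sides disagree ($R$ versus $R_{21}^{-1}$ in the $\bx_1\tens\extd\bt_1$ component), so the crossed-module condition fails unless $R$ is involutive. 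This is precisely the check the paper performs. Since you do go on to verify Lemma~\ref{extension ra}(i), your argument is repaired by simply deleting the ``fixed by module-algebra'' derivation and taking the stated action as a datum to be verified; but as written, that step misattributes the one genuinely subtle feature of the theorem --- the mismatch between $q^{-1}R_{21}^{-1}$ on $\bx$ and $q^{-1}R$ on $\extd\bx$ --- to an axiom that cannot detect it.
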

\begin{proof} Note that the degree 0 part is essentially the Hopf algebra $A\rbiprod V(R)$ and is just the $q$-Hecke case of the construction of inhomogeneous quantum groups by cobosonisation cf. \cite[Cor~10.2.10]{Ma:book}.  However, the action of $\Omega^1(A)$ is not given as far as we know by previous construction but rather we check directly that we indeed obtain $\Omega(V(R))$ as a super right $\Omega(A)$-crossed module. Thus
\begin{align*}
\Delta_R(\bx_1 \ra \bt_2) =& \bx_1 \ra \bt_2 \tens (S\bt_2)\bt_1\bt_2 = \bx_1 q^{-1} R_{21}^{-1} \tens (S\bt_2)\bt_1\bt_2 = (\id \tens S\bt_2)(\bx_1 q^{-1} R_{21}^{-1} \tens \bt_1\bt_2)\\
=&(\id \tens S\bt_2)(\bx_1 \tens \bt_2\bt_1 q^{-1}R_{21}^{-1}) = \bx_1 \tens (S\bt_2)\bt_2 \bt_1 q^{-1}R_{21}^{-1}) = \bx_1 \tens \bt_1 q^{-1}R_{21}^{-1}\\
\Delta_{R*}((\extd \bx_1) \ra \bt_2) =& (\extd\bx_1) \ra \bt_2 \tens (S\bt_2)\bt_1\bt_2 + \bx_1 \ra \bt_2 \tens (S\bt_2)(\extd\bt_1)\bt_2 \\
=& \extd\bx_1 q^{-1}R \tens (S\bt_2)\bt_1 \bt_2 + \bx_1 q^{-1}R_{21}^{-1}\tens (S\bt_2)(\extd\bt_1)\bt_2\\
=&(\id \tens S\bt_2)(\extd \bx_1 q^{-1}R \tens \bt_1\bt_2 + \bx_1 q^{-1}R_{21}^{-1} \tens (\extd\bt_1)\bt_2)\\
=&(\id \tens S\bt_2)(\extd \bx_1 \tens \bt_2\bt_1 q^{-1}R + \bx_1 \tens \bt_2 \extd\bt_1 q^{-1}R)\\
=&\extd \bx_1 \tens \bt_1 q^{-1}R + \bx_1 \tens \extd \bt_1 q^{-1}R.
\end{align*}
verify that the conditions of Lemma \ref{extension ra} (i) hold and therefore $\Omega(V(R))$ is a super right $\Omega(A)$-crossed module. It is also straightforward to check directly that $\Delta_{R*}(\bx_1 \ra \extd \bt_2)$ and $\Delta_{R*}((\extd \bx_1)\ra \extd \bt_2)$ obey the crossed module axiom. 

Next, one can check that $\Omega(V(R))$ is indeed a super-Hopf algebra in this category with $x_i,\extd x_i$ primitive and with the braiding for $\Omega(V(R))$ in the crossed module category coming out from the action and coaction of $\Omega(A)$ as
\[\Psi(\bx_1 \tens \bx_2) = \bx_2 \tens \bx_1 q^{-1}R_{21}^{-1}, \quad \Psi(\extd \bx_1 \tens \bx_2) = \bx_2 \tens \extd \bx_1 q^{-1}R\] \[\Psi(\bx_1 \tens \extd \bx_2) = \extd \bx_2 \tens \bx_1 q^{-1}R_{21}^{-1} + (q^{-2}-1) \bx_2 \tens \bx_1 P, \quad \Psi(\extd \bx_1 \tens \extd \bx_2)= \extd \bx_2 \tens \bx_1 q^{-1}R. \]
In this way, one verifies that the conditions of Lemma~\ref{extension ra} (ii) hold. We exhibit directly that the construction works by showing that the super coproduct $\Delta_*$ of $\Omega(A\rbiprod V(R))$ is indeed well-defined on degree 1. We let $\lambda = q^{-2}-1$. Then,
\begin{align*}
\Delta_*(\bx_1 \extd \bt_2)=& \extd \bt_2 \tens \bx_1 \bt_2 + \bt_2 \tens \bx_1 \extd \bt_2+ \bx_1 \extd \bt_2 \tens \bt_1 \bt_2 + \bx_1 \bt_2 \tens \bt_1\extd \bt_2\\
=& \extd\bt_2\tens \bt_2\bx_1 q^{-1}R_{21}^{-1} + \bt_2 \tens \extd \bt_2.\bx_1 q^{-1}R_{21}^{-1}+\lambda \bt_2 \tens \bt_2\extd \bx_1 P\\
&+\extd \bt_2 \bx_1 \tens \bt_2\bt_1 q^{-1}R_{21}^{-1} + \lambda\bt_2\extd \bx_1  \tens \bt_2 \bt_1 P + \bt_2 \bx_1 q^{-1}R_{21}^{-1}\tens \bt_1 \extd \bt_2
\end{align*}
\begin{align*}
\Delta_*((\extd \bt_2)\bx_1 q^{-1}&R_{21}^{-1}) = (\extd \bt_2 \tens \bt_2 \bx_1 + \extd \bt_2 .\bx_1 \tens \bt_2\bt_1 + \bt_2 \tens (\extd\bt_2)\bx_1 + \bt_2\bx_1 \tens (\extd \bt_2)\bt_1)q^{-1}R_{21}^{-1}\\
=&\Big(\extd \bt_2 \tens \bt_2 \bx_1 + \extd \bt_2 .\bx_1 \tens \bt_2\bt_1 + \bt_2 \tens \(\extd\bt_2)\bx_1\Big)q^{-1}R_{21}^{-1} + q^{-1}\bt_2\bx_1 \tens R\bt_1\extd \bt_2\\
\Delta_*(\lambda\bt_2\extd \bx_1 P) =& (\bt_2 \tens \bt_2 \extd \bx_1 + \bt_2 \extd \bx_1 \tens \bt_2 \bt_1 + \bt_2\bx_1 \tens \bt_2 \extd \bt_1)\lambda P
\end{align*}
from which we find that $\Delta_*(\bx_1 \extd \bt_2 - \extd \bt_2.\bx_1 q^{-1}R_{21}^{-1} - \lambda\bt_2\extd \bx_1 P)=0$ using $\lambda P = q^{-1}(R_{21}^{-1}-R)$. We  also exhibit that this bimodule relation is compatible with the graded Leibniz rule as it must, 
\begin{align*}
\extd (\bx_1 \extd \bt_2 -& (\extd \bt_2)\bx_1 q^{-1}R_{21}^{-1}-\lambda\bt_2\extd\bx_1 P)\\ 
=& \extd \bx_1 \extd \bt_2 + \extd \bt_2\extd \bx_1 q^{-1}R_{21}^{-1} - \lambda\extd \bt_2 \extd \bx_1 P =\extd\bt_2 \extd \bx_1\Big(q^{-1}(-R+R_{21}^{-1})-\lambda P\Big) =0.
\end{align*}
Similarly for the other relations. 
\end{proof}

We also know by Corollary~\ref{cobosdif} that the right $A\rbiprod V(R)$-coaction $\Delta_R \bx= \bx\tens\bt+ 1\tens \bx\in V(R)\tens A\rbiprod V(R)$ on $V(R)$ underlying our view of the former as an inhomogeneous quantum group is differentiable for the exterior algebras above. 

\subsection{Calculations for the smallest $R$-matrices}\label{sec 4.2}

The construction in Theorem \ref{calcbos q-Hecke} includes the standard $q$-deformation $R$-matrix for the $SL_n$ series for all $n$ as these are known to be $q$-Hecke when normalised correctly, see \cite{Ma:book}. In this case  $A=\C_q[GL_n]$ and $V(R)=\C_q^n$ is the standard quantum-braided $n$-plane with relations $x_jx_i=qx_ix_j$ for all $j>i$. Thus we obtain $\Omega(\C_q[GL_n]\rbiprod\C_q^n)$ such that the canonical coaction of $\C_q[GL_n]\rbiprod\C_q^n$ on $\C_q^n$ is differentiable. In this section, we exhibit $n=1$ and $n=2$ explicitly. 

For $n=1$, $R=(q)$, we have $D=t$ and $A=\C_q[GL_1]=\C[t,t^{-1}]$  the algebraic circle with $\Delta t = t\tens t$ and with strongly bicovariant exterior algebra structure
 \[(\extd t)t=q^2t\extd t, \quad (\extd t)^2=0, \quad \Delta_* \extd t = t\tens \extd t + \extd t \tens t\] 
by Lemma~\ref{FRTcalc}, and implied relations for $t^{-1}$. (This is the standard bicovariant calculus on a circle for a free parameter $q$).  We have $B=V(R)=\C[x]$ with calculus $(\extd x)x = q^2x \extd x$,  $(\extd x)^2=0$, which is  $A$-covariant with $\Delta_Rx=x\tens t$ by Lemma~\ref{diff coact A(R) on V(R)}. 

\begin{proposition}\label{ExB+} Let $B=V(R)=\C[x]$ be a braided-Hopf algebra in $\CM^{A}_{A}$ as part of $\Omega(B)$ a super braided-Hopf algebra in $\CM^{\Omega(A)}_{\Omega(A)}$ with
\[x \ra t = q^{-2}x, \quad x {\ra} \extd t = (q^{-2}-1)\extd x, \quad \extd x \ra t = \extd x, \quad \extd x \ra \extd t = 0,\]
\[\Delta_R x = x\tens t , \quad \Delta_{R*} \extd x = \extd x \tens t + x \tens \extd t,\]
\[(\extd x)x = q^2x \extd x, \quad (\extd x)^2=0, \quad \underline{\Delta}x = 1\tens x + x \tens 1, \quad \underline{\Delta}_*\extd x = 1 \tens \extd x + \extd x \tens 1.\]
The super bosonisation defines a strongly bicovariant exterior algebra $\Omega(\C_q[B_+]):=\Omega(A) \rbiprod \Omega(B)$ with relations and comultiplication
\[xt=q^{-2}tx, \quad (\extd x)t = t\extd x, \quad  (\extd t)x = q^2 x\extd t + (q^2-1)t\extd x , \quad \extd x\extd t= -\extd t\extd x\]
\[\Delta x = 1\tens x + x\tens t, \quad \Delta_* \extd x = 1\tens \extd x + \extd x \tens t + x \tens \extd t\]
where we identify $A\rbiprod B=\C_q[B_+]$, the quantisation of the positive Borel subgroup of $SL_2$ (a quotient of $\C_q[SL_2]$).  Moreover, the coaction $\Delta_R:\C[x]\to\C[x]\tens\C_q[B_+]$ given by $\Delta_R x=1\tens x+ x\tens t$ is differentiable. 
\end{proposition}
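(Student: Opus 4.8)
The plan is to recognise this as the $n=1$ specialisation of Theorem~\ref{calcbos q-Hecke}, taking $R=(q)$, so that $D=t$, $A=A(R)[D^{-1}]=\C[t,t^{-1}]$ and $V(R)=\C[x]$, with $R_{21}=(q)$ and the $1\times1$ permutation matrix $P=(1)$. Substituting these scalars into the action formulae of that theorem recovers precisely the stated $x\ra t=q^{-2}x$, $\extd x\ra t=\extd x$, $x\ra\extd t=(q^{-2}-1)\extd x$ and $\extd x\ra\extd t=0$, while the general cross relations $\bx_1\bt_2=\bt_2\bx_1 q^{-1}R_{21}^{-1}$ and its companions collapse to $xt=q^{-2}tx$, $(\extd x)t=t\extd x$, $(\extd t)x=q^2x\extd t+(q^2-1)t\extd x$ and $\extd x\extd t=-\extd t\extd x$. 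Thus the first assertion, that $\Omega(A)\rbiprod\Omega(B)$ is a strongly bicovariant exterior algebra with the displayed relations and coproducts, is an instance of the general result and needs no independent argument.

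For a self-contained treatment I would instead verify the hypotheses of Theorem~\ref{calcbos} directly through Lemma~\ref{extension ra}. Since $x$ and $\extd x$ are declared primitive for $\underline{\Delta}_*$ and the coaction is $\Delta_{R*}\extd x=\extd x\tens t+x\tens\extd t$ with $x^{\barnot}=x$, $x^{\baro}=t$, the crossed-module identity of Lemma~\ref{extension ra}(i) reduces to a single check on $\extd x$ with $a=t$; here $t^{-1}(\extd t)t=q^2\extd t$ makes the two sides agree in one line because all the $R$-matrix data are scalars. The mixed degree-one pieces $x\ra\extd t$ and $\extd x\ra\extd t$ are then forced by the differentiability prescription $\eta\ra\extd_A a=(-1)^{|\eta|}(\extd_B(\eta\ra a)-(\extd_B\eta)\ra a)$ of Lemma~\ref{diff right action}, reproducing exactly the stated values. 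Part (ii) of Lemma~\ref{extension ra} requires only that the primitive $\underline{\Delta}_*$ be a well-defined algebra map in the braided tensor product of $\CM^{\Omega(A)}_{\Omega(A)}$, i.e.\ that it annihilate $(\extd x)x-q^2x\extd x$; the braiding $\Psi$ read off from the action and coaction supplies precisely the factor $q^2$, so compatibility holds. Granting these, Theorem~\ref{calcbos} delivers the strongly bicovariant $\Omega(A)\rbiprod\Omega(B)$ with the asserted differential, and the explicit relations follow by expanding the super-bosonisation product $(\omega\tens\eta)(\tau\tens\xi)=(-1)^{|\eta||\tau\o|}\omega\tau\o\tens(\eta\ra\tau\t)\xi$ on generators.

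The identification $A\rbiprod B\cong\C_q[B_+]$ I would make by recognising $xt=q^{-2}tx$ together with $\Delta x=1\tens x+x\tens t$ and $\Delta t=t\tens t$ as the relations and coproduct of the quantised positive Borel, realised as the evident quotient of $\C_q[SL_2]$. Finally, differentiability of the coaction $\Delta_R:\C[x]\to\C[x]\tens\C_q[B_+]$, $\Delta_R x=1\tens x+x\tens t$, is immediate from Corollary~\ref{cobosdif}, whose hypotheses coincide with those just established, the extension being $\Delta_{R*}\extd x=\extd x\tens t+x\tens\extd t+1\tens\extd x$ as a restriction of the coproduct of $\Omega(\C_q[B_+])$.

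The one point demanding genuine care, and thus the main potential obstacle, is that the degree-one action pieces $x\ra\extd t=(q^{-2}-1)\extd x$ and $\extd x\ra\extd t=0$, together with the compatibility of the primitive braided coproduct with the relation $(\extd x)x=q^2x\extd x$, are the data \emph{not} supplied by the underlying degree-zero Hopf-algebraic bosonisation $\C_q[B_+]$; these must be checked afresh against the crossed-module and super-module-algebra axioms, although in this rank-one case every such verification is elementary.
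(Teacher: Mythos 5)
Your proposal is correct and takes essentially the same approach as the paper, whose entire proof is that the result ``is read off immediately from Theorem~\ref{calcbos q-Hecke}'' (with $R=(q)$, $P=(1)$, $D=t$, so $A=\C[t,t^{-1}]$ and $V(R)=\C[x]$) ``but the key facts are also simple enough to verify directly.'' Your supplementary direct checks via Lemma~\ref{extension ra}, Lemma~\ref{diff right action} and Corollary~\ref{cobosdif} are precisely the verifications the paper alludes to, and your scalar specialisations of the action, cross relations and coproducts all match the stated formulae.
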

\begin{proof} This is read off immediately from Theorem~\ref{calcbos q-Hecke} but the key facts are also simple enough to verify directly. \end{proof}

\begin{remark} The Hopf algebra $\C_q[B_+]$ is also called the Sweedler-Taft algebra (but we think of it as a $q$-deformed coordinate algebra). One can also think of it as  $U_q(b_+)$ and in this case we recover the exterior algebra  $\Omega(U_q(b_+))$ previously found in \cite{Oec}. In addition, we can work with $q$ a primitive $n$th odd root of unity where now $A=\C_q[t]/(t^n-1)$ and $B=\C[x]/(x^n)$, giving us the the exterior algebra of the reduced quantum group $c_q[B_+]$ with additional relations $t^n=1$ and $x^n=0$.
\end{remark}

We now compute the rather more complicated $n=2$ case where  $A=\C_q[GL_2]$. We recall that this has generators $t^1{}_1 = a$, $t^1{}_2 = b$, $t^2{}_1 = c$, $t^2{}_2=d$ with relations, invertible determinant, coproduct and antipode \[ba = qab, \quad ca=qac, \quad db= qbd, \quad dc=qcd\]
\[cb=bc, \quad da-ad=(q-q^{-1})bc, \quad ad-q^{-1}bc = da-qcb =D\]
\[\Delta\begin{pmatrix}
a~~ & b \\ c~~ & d
\end{pmatrix} = \begin{pmatrix}
a~~ & b \\ c~~ & d
\end{pmatrix} \tens \begin{pmatrix}
a~~ & b \\ c~~ & d
\end{pmatrix}, \quad S\begin{pmatrix}
a~~ & b \\ c~~ & d
\end{pmatrix} = D^{-1} \begin{pmatrix}
d~~ & -qb \\ -q^{-1}c~~ & a
\end{pmatrix}.\]
Next, $A$ has an obvious 1-parameter family of coquasitriangular structures 
\begin{equation}\label{Rsl2}
\CR(\bt_1\tens \bt_2)=R_\alpha
= q^{\alpha}\begin{pmatrix}
q ~~& 0~~ & 0~~ & 0\\ 0~~ & 1~~ & q-q^{-1}~~ & 0\\ 0~~ & 0~~ & 1~~ & 0\\ 0~~ & 0~~ & 0~~ & q
\end{pmatrix}
\end{equation}
with $R=R_0$ the $q$-Hecke normalisation. The choice of $\CR$ means a 1-parameter family $\Omega_\alpha(\C_q[GL_2])$ if we use the standard construction for differentials on  coquasitriangular Hopf algebras.  We refer to \cite{Ma:hod} for a recent general treatment,  specialising  which in our case gives the exterior algebra in the form $\C_q[GL_2]\rbiprod\Lambda$ where the space of left-invariant 1-forms $\Lambda^1$ has basis  $e_a,e_b,e_c,e_d$  and cross relations
\[e_a \begin{pmatrix}
a~~ & b\\ c~~& d
\end{pmatrix} = q^{2\alpha}\begin{pmatrix}
q^2 a~~ & b \\
q^2 c~~ & d 
\end{pmatrix} e_a, \quad [e_b, \begin{pmatrix}
a~~ & b\\ c~~& d
\end{pmatrix}]_{q^{1+2\alpha}} = q^{1+2\alpha}\lambda\begin{pmatrix}
0~~ &   a \\
0~~ &   c 
\end{pmatrix}e_a \]
\[[e_c, \begin{pmatrix}
a~~ & b\\ c~~ & d
\end{pmatrix}]_{q^{1+2\alpha}} = q^{1+2\alpha}\lambda\begin{pmatrix}
 b~~ & 0 \\
 d~~ & 0
\end{pmatrix}e_a,\quad [e_d, \begin{pmatrix}
a \\ c
\end{pmatrix}]_{q^{2\alpha}} = q^{2\alpha}\lambda\begin{pmatrix}
 b \\ d
\end{pmatrix} e_b\]
\[[e_d, \begin{pmatrix}
b \\ d
\end{pmatrix}]_{q^{2+2\alpha}} = q^{2\alpha}\lambda\begin{pmatrix}
 a e_c + \lambda b e_a \\
 c e_c + \lambda d e_a
\end{pmatrix} \]
with $\lambda = q-q^{-1}$.  The calculus is inner with $\theta = e_a + e_d$, which defines $\extd$. When $\alpha=-\frac{1}{2}$ the calculus descends to the quotient $D=1$ giving the standard 4D calculus on $\C_q[SL_2]$ as in \cite{Wor} but otherwise we are in the same family but with a different $q$-factor in the commutation relations. On the other hand,  the crossed module braiding $\Psi$ on $\Lambda^1$ as given in \cite{Ma:hod} has an equal number of $R_\alpha$ and its appropriate inverse, so does not depend on the normalisation factor in $R_\alpha$. Hence the left-invariant exterior algebra $\Lambda = T\Lambda^1/\ker(\id-\Psi)$ is the same as for the standard 4D calculus on $\C_q[SL_2]$, namely the usual graded-commutative Grassmannian algebra on $e_a, e_b, e_c$ and 
\[e_a e_d + e_d e_a + q^{-1}\lambda e_c e_b = 0, \quad e_d e_c +q^2e_c e_d + q^{-1}\lambda e_a e_c =0\]
\[e_b e_d + q^2 e_d e_b + q^{-1}\lambda e_b e_a =0, \quad e_a^2 = e_b^2 = e_c^2=0, \quad e_d^2 = q^{-1}\lambda e_c e_b\]
This recalls the full structure of the exterior algebras $\Omega_\alpha(\C_q[GL_2])=\C_q[GL_2]\rbiprod \Lambda$.

\begin{lemma}\label{4Dcalcisom} The 4D strongly bicovariant calculus $\Omega(\C_q[GL_2])$ in  Lemma~\ref{FRTcalc} has relations
\[(\extd a)a = q^2 a\extd a, \quad (\extd a) b = qb\extd a, \quad (\extd a)c = q c\extd a,\quad (\extd a) d=d\extd a\]
\[(\extd b)a = q a\extd b+(q^2-1)b \extd a, \quad (\extd b)b = q^2 b \extd b, \quad (\extd b)c = c\extd b + (q-q^{-1})d \extd a, \quad (\extd b)d = qd\extd b\]
\[(\extd c)a = qa\extd c + (q^2-1)c\extd a, \quad (\extd c)b = b\extd c + (q-q^{-1})d\extd a , \quad (\extd c)c=q^2c\extd c, \quad (\extd c)d = qd \extd c\]
\[(\extd d)a=a\extd d + (q-q^{-1})(b\extd c + c\extd b + (q-q^{-1})d\extd a ), \quad (\extd d)b = qb\extd d + (q^2-1)d\extd b\]
\[(\extd d)c=qc\extd d + (q^2-1)d\extd c, \quad (\extd d)d =q^2d\extd d\]
along with implied relations for $D^{-1}$, and is isomorphic to the $\alpha=0$ member of the standard family of 4D bicovariant calculi $\Omega_\alpha(\C_q[GL_2])$. 
\end{lemma}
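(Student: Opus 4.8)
The plan is to treat the two assertions in turn. For the explicit bimodule relations, I would simply unpack the compact identity $(\extd\bt_1)\bt_2 = R_{21}\bt_2(\extd\bt_1)R$ of Lemma~\ref{FRTcalc} with the $GL_2$ $R$-matrix $R=R_0$ from (\ref{Rsl2}) at $\alpha=0$ and $R_{21}=PR_0P$. Writing out the entries $t^1{}_1=a$, $t^1{}_2=b$, $t^2{}_1=c$, $t^2{}_2=d$, this single matrix identity in $M_2(k)\tens M_2(k)$ decomposes into the $16$ scalar relations listed, one for each product $(\extd t^i{}_j)t^r{}_s$; the work is the routine collection of the entries of $R_0$ and $R_{21}$. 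The relations involving $D^{-1}$ then follow formally: $D=ad-q^{-1}bc$ is central and grouplike, so $\extd D = (\extd a)d + a\,\extd d - q^{-1}(\extd b)c - q^{-1}b\,\extd c$ by the Leibniz rule, and combining $\extd D^{-1}=-D^{-1}(\extd D)D^{-1}$ with the degree-$0$ relations and the just-derived degree-$1$ relations determines how $D^{-1}$ and $\extd D^{-1}$ commute through the generators.

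For the isomorphism, the key point is that, as a calculus on the fixed Hopf algebra $\C_q[GL_2]$ with the fixed $\extd$, a bicovariant first order calculus generated by $\extd\bt$ is completely pinned down by its bimodule relations; so it suffices to show that the FRT relations of the first part coincide with those of $\Omega_0(\C_q[GL_2])$, the isomorphism then being the identity on $\C_q[GL_2]$ extended by $\extd\bt\mapsto\extd\bt$. To compare, I would rewrite the standard family in terms of $\extd\bt$ using that $\extd$ there is \emph{inner}, $\extd=[\theta,\cdot]$ with $\theta=e_a+e_d$. Thus $\extd a=[\theta,a]=(q^{2+2\alpha}-1)a\,e_a + q^{2\alpha}\lambda\,b\,e_b + (q^{2\alpha}-1)a\,e_d$ and similarly for $b,c,d$, after which substituting these expressions into each FRT relation and reducing by the displayed $e$-versus-generator commutation relations shows that every relation holds precisely when the overall factor $q^{2\alpha}$ equals $1$. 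Conceptually this is forced: the standard-family structure constants all carry a common $q^{2\alpha}$, while the FRT relation is quadratic in $R$, so the $q$-Hecke normalisation $R=R_0$ corresponds exactly to $\alpha=0$. Finally, both exterior algebras are built over the same first order data with the same normalisation-independent braiding $\Psi$ on $\Lambda^1$ (noted before the statement), the FRT one as the maximal prolongation and $\Omega_0=\C_q[GL_2]\rbiprod\Lambda$ as the Woronowicz algebra $\Lambda=T\Lambda^1/\ker(\id-\Psi)$, which coincide for this $4$D calculus, so the identification extends to the full strongly bicovariant exterior algebras.

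The main obstacle is the bookkeeping in the second part, and in particular using the correct dictionary between the two presentations. The naive Maurer-Cartan substitution $\extd\bt=\bt\,\be$ with $\be=(S\bt)\extd\bt$ does \emph{not} reproduce the standard-family relations; one must instead feed in the inner-derivation expressions $\extd\bt=[\theta,\bt]$, whose cross terms (for example the $b^2e_a$ contribution generated in $(\extd a)a$) only cancel correctly against the $\lambda$-terms of the $e$-relations when $\alpha=0$. Once this dictionary is fixed and the common $q^{2\alpha}$ scaling is isolated, verifying the remaining relations is mechanical and the value $\alpha=0$ is singled out.
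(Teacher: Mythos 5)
Your first half is exactly the paper's: the sixteen relations are the entry-by-entry expansion of $(\extd \bt_1)\bt_2=R_{21}\bt_2\extd\bt_1 R$ for the $GL_2$ $R$-matrix, and the $D^{-1}$ relations follow formally. For the isomorphism, your route genuinely differs from the paper's. The paper works inside the FRT calculus: it computes the Maurer--Cartan forms $\omega_a=D^{-1}(d\extd a-qb\extd c)$, $\omega_b=D^{-1}(d\extd b-qb\extd d)$, $\omega_c=D^{-1}(a\extd c-q^{-1}c\extd a)$, $\omega_d=D^{-1}(a\extd d-q^{-1}c\extd b)$, their commutation relations with the generators, and then exhibits the explicit map $\varphi$ which is the identity in degree $0$ and sends $\omega_a\mapsto q\lambda e_a$, $\omega_b\mapsto\lambda e_c$, $\omega_c\mapsto\lambda e_b$, $\omega_d\mapsto q\lambda e_d+\lambda^2 e_a$ --- this is precisely the ``correct dictionary'' you allude to, with its $b\leftrightarrow c$ swap and the mixing of $e_d$ into $e_a$. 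Your map is in fact the same map in disguise: any DGA morphism that is the identity in degree $0$ must send $\extd\bt\mapsto\extd\bt$, and indeed your $\extd a=(q^{2}-1)a e_a+\lambda b e_b$ at $\alpha=0$ agrees with $\varphi(a\omega_a+b\omega_c)$. What your route buys is that one never has to guess the basis change: substituting $\extd=[\theta,\ ]$ into each FRT relation is mechanical and isolates $q^{2\alpha}=1$ cleanly. What the paper's route buys is that the basis change is explicit, so both well-definedness and bijectivity reduce to finite checks on the invariant generators.

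There is, however, a genuine logical gap as written. Verifying that the FRT relations hold in $\Omega_0(\C_q[GL_2])$ only produces a well-defined map $\Omega^1_{\rm FRT}\to\Omega^1_0$ commuting with $\extd$ and the identity in degree $0$; it does not rule out that $\Omega_0$ satisfies \emph{additional} first-order relations, i.e.\ that your map has a kernel. Your statement that a calculus ``is completely pinned down by its bimodule relations'' assumes exactly what must be proved, namely that the relations you listed generate \emph{all} relations of $\Omega^1_0$; your verification plan is one-directional. The standard repair is a covariance-plus-dimension argument: the map intertwines the coactions (being the identity on $A$ and commuting with $\extd$), so by the Hopf-module structure it is determined by its restriction to the spaces of left-invariant $1$-forms, both of which are $4$-dimensional; its restriction is surjective because the coefficient matrix expressing $\extd\bt$ in the basis $e_a,\dots,e_d$ (equivalently the paper's matrix $\varphi(\omega_i)$) is invertible for $q^2\neq 1$, and a surjection of $4$-dimensional spaces is a bijection, whence the calculi coincide. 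Note that this repair essentially forces you to compute the invariant forms anyway, which is why the paper organises the proof that way. A second, smaller, gap: your claim that the maximal prolongation (which is what Lemma~\ref{FRTcalc} gives) coincides with the Woronowicz algebra $T\Lambda^1/\langle\ker(\id-\Psi)\rangle$ is asserted, not proved; since the prolongation relations are of image type and the Woronowicz ones of kernel type, this is not automatic for a non-involutive $\Psi$ and requires checking that the span of the quadratic FRT relations is $10$-dimensional, matching $\ker(\id-\Psi)$ (the paper is admittedly also brief here). Finally, your illustrative cross-term is off: $(\extd a)a$ generates $ab\,e_b$ and $a^2e_d$ terms, not $b^2e_a$, though this does not affect the method.
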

\begin{proof} The displayed relations are a routine calculation from Lemma~\ref{FRTcalc}. 
Next, working in this calculus, we consider the basis of left-invariant 1-forms $\omega_a = \varpi(a)$, $\omega_b = \varpi(b)$, $\omega_c = \varpi(c)$, $\omega_d = \varpi(d)$, where $\varpi(a)=(Sa\o)\extd a\t$ is the quantum Maurer-Cartan form. Explicitly, we have basic forms and their relations
\[\omega_a = D^{-1}(d\extd a- qb\extd c), \quad \omega_b=D^{-1}(d\extd b - qb \extd d)\]
\[\omega_c = D^{-1}(a\extd c-q^{-1}c\extd a), \quad \omega_d = D^{-1}(a\extd d - q^{-1}c\extd b)\]
\[\omega_a \begin{pmatrix}
a~~ & b \\ c~~ & d
\end{pmatrix} = \begin{pmatrix}
q^2 a~~ & b \\ q^2 c~~ & d
\end{pmatrix}\omega_a, \quad [\omega_b, \begin{pmatrix}
a~~ & b \\ c~~ & d
\end{pmatrix}]_q = \lambda \begin{pmatrix}
b~~ & 0 \\ d~~ & 0 
\end{pmatrix}\omega_a, \quad [\omega_b, \begin{pmatrix}
a~~ & b \\ c~~ & d
\end{pmatrix}]_q = \lambda \begin{pmatrix}
0~~ & a \\ 0~~ & c 
\end{pmatrix}\omega_a\]
\[[\omega_d, \begin{pmatrix}
a \\ c
\end{pmatrix}] = \begin{pmatrix}
q\lambda b \omega_c + \lambda^2 a\omega_a\\
q\lambda d \omega_c  + \lambda^2 c\omega_a
\end{pmatrix}, \quad [\omega_d, \begin{pmatrix}
b\\ d
\end{pmatrix}]_{q^2} = \lambda\begin{pmatrix}
a \\ c
\end{pmatrix}\omega_b\]
where $\lambda = q-q^{-1}$, and exterior derivative 
\[\extd a = a \omega_a + b \omega_c, \quad \extd b = a \omega_b + b \omega_d, \quad \extd c = c \omega_a + d\omega_c , \quad c\omega_b + d\omega_d.\]
It is then a straightforward calculation to prove that $\varphi : \Omega(\C_q[GL_2]) \to \Omega_0(\C_q[GL_2])$ given by the identity map for elements of degree 0 and 
\[\varphi(\omega_a) = q\lambda e_a, \quad \varphi(\omega_b) = \lambda e_c, \quad \varphi(\omega_c)=\lambda e_b, \quad \varphi(\omega_d) = q\lambda e_d + \lambda^2 e_a\]
is an isomorphism. \end{proof}

We next consider the quantum-braided plane $B=\C_q^2$ generated by $x_1, x_2$ with relation $x_2x_1=qx_1x_2$ and viewed initially as a braided-Hopf algebra in the category of right $\C_q[GL_2]$-comodules and with exterior algebra  $\Omega(\C_q^2)$ and standard relations and coaction from Lemma~\ref{diff coact A(R) on V(R)}, 
\begin{align}\label{planecalc} (\extd x_i)x_i &= q^2 x_i \extd x_i, \quad (\extd x_1)x_2 = qx_2 \extd x_1, \quad (\extd x_2) x_1 = qx_1 \extd x_2 + (q^2-1)x_2 \extd x_1\nonumber\\
(\extd x_i)^2 &= 0, \quad \extd x_2\extd x_1 = -q^{-1}\extd x_1  \extd x_2,\quad \Delta_R x_i = x_j \tens t^j{}_i. \end{align}
We also view $\C_q[GL_1]$, covered above in our family with  $n=1$, as the algebraic circle $\C[D,D^{-1}]$ with its 1-dimensional strongly bicovariant differential calculus with relations $(\extd D)D=q^2D\extd D$ and coproduct $\Delta D=D\tens D$.

\begin{proposition}\label{new 4D calc} The 4D strongly bicovariant calculus $\Omega(\C_q[GL_2])$ constructed from  Lemma~\ref{FRTcalc} is the universal strongly
 bicovariant exterior algebra such that
 \begin{enumerate}\item the canonical right coaction of $\C_q[GL_2]$ on  $\C_q^2$ is differentiable; 
 \item the determinant  inclusion $\C_q[GL_1]\hookrightarrow \C_q[GL_2]$ is differentiable.
\end{enumerate}
\end{proposition}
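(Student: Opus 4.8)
The plan is to show that the two differentiability requirements, together with strong bicovariance, force the bimodule relations of $\Omega^1(\C_q[GL_2])$ to be exactly those listed in Lemma~\ref{4Dcalcisom}, after which universality follows by passing to the maximal prolongation. The starting observation is that strong bicovariance already pins down the super-coproduct on generators: since $\Delta t^i{}_j = t^i{}_k\tens t^k{}_j$ in degree $0$, the super-coderivation property (\ref{super-coderivation}) forces $\Delta_*\extd t^i{}_j = \extd t^i{}_k\tens t^k{}_j + t^i{}_k\tens \extd t^k{}_j$. Consequently the only candidate extension of the quantum-plane coaction is determined in degree $1$ by $\Delta_{R*}\extd x_i = \extd x_j\tens t^j{}_i + x_j\tens \extd t^j{}_i$, with $\delta_R(x_i\extd x_k) = x_a x_b\tens t^a{}_i\,\extd t^b{}_k$ as in (\ref{diff coact1}). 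Because $\Omega(\C_q^2)$ is a maximal prolongation, Lemma~\ref{maxDeltaR} reduces differentiability of the coaction to the statement that $\Delta_{R*}$ is well defined on the three defining relations (\ref{planecalc}) of $\Omega^1(\C_q^2)$.

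First I would impose condition (1). Applying $\Delta_{R*}$ to each relation in (\ref{planecalc}) and projecting onto the $\C_q^2\underline{\tens}\Omega^1(\C_q[GL_2])$ summand gives an identity of the form $x_jx_k\tens(\extd t^{\,\cdot}{}_\cdot)\,t^{\,\cdot}{}_\cdot = x_kx_j\tens t^{\,\cdot}{}_\cdot\,\extd t^{\,\cdot}{}_\cdot$; normal-ordering the left-hand tensor factor against the PBW basis $\{x_1^2, x_1x_2, x_2^2\}$ via $x_2x_1 = qx_1x_2$ and matching coefficients then yields relations in $\Omega^1(\C_q[GL_2])$. The diagonal coefficients of $x_1^2$ and $x_2^2$ isolate single products and reproduce exactly the eight relations $(\extd a)a = q^2a\extd a$, $(\extd a)b = qb\extd a$, $(\extd b)a = qa\extd b + (q^2-1)b\extd a$, $(\extd b)b = q^2b\extd b$ together with their four images under $(a,b)\mapsto(c,d)$, in agreement with Lemma~\ref{4Dcalcisom}; note this already selects the $\alpha=0$ member of the family $\Omega_\alpha$. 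The off-diagonal coefficient of $x_1x_2$ in each of the four relations gives a single linear identity among the remaining eight products $(\extd a)c,(\extd c)a,(\extd b)d,(\extd d)b,(\extd a)d,(\extd c)b,(\extd b)c,(\extd d)a$, so condition (1) alone leaves a four-parameter ambiguity.

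Next I would bring in condition (2). Differentiability of the inclusion $\C_q[GL_1]=\C[D,D^{-1}]\hookrightarrow\C_q[GL_2]$ means the extending map of DGAs must send the defining relation of the one-dimensional circle calculus to a valid relation, i.e.\ $(\extd D)D = q^2 D\extd D$ must hold in $\Omega^1(\C_q[GL_2])$ with $D = ad-q^{-1}bc$ and $\extd D = (\extd a)d + a\extd d - q^{-1}\big((\extd b)c + b\extd c\big)$ by the Leibniz rule. Expanding $(\extd D)D - q^2D\extd D$ in the sixteen monomials $t^{\,\cdot}{}_\cdot\,\extd t^{\,\cdot}{}_\cdot$, reducing with the eight relations and four combinations already secured, and setting the result to zero produces the remaining independent constraints on the four-parameter family. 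Solving this linear system should close it to a unique solution, which I would then check coincides with the off-diagonal relations of Lemma~\ref{4Dcalcisom}, e.g.\ $(\extd a)c = qc\extd a$, $(\extd a)d = d\extd a$, and $(\extd c)b = b\extd c + (q-q^{-1})d\extd a$. At this point the first-order calculus is uniquely determined; the \emph{universal} (largest) strongly bicovariant exterior algebra extending it is its maximal prolongation, which by Lemma~\ref{FRTcalc} and Lemma~\ref{4Dcalcisom} is precisely the stated $\Omega(\C_q[GL_2])$, and any other strongly bicovariant calculus obeying (1) and (2) must satisfy the same forced first-order relations and hence be a quotient of it.

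The main obstacle is the bookkeeping in this last step: confirming that the single determinant relation $(\extd D)D = q^2 D\extd D$, after normal-ordering against the partial relations coming from condition (1), supplies exactly enough independent equations to eliminate the four-parameter freedom without over-determining the system and thereby forcing a contradiction. This requires a careful $q$-weight-graded expansion, and it is here that one must verify that bicovariance and the determinant condition are mutually consistent and jointly rigid; the $q$-Hecke identity $R_{21}R = \id + (q-q^{-1})PR$ underlying Lemma~\ref{FRTcalc} is what ultimately guarantees that the unique solution is the FRT calculus.
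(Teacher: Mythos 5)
Your overall plan mirrors the paper's proof: derive necessary bimodule relations from differentiability of the coaction on $\C_q^2$, then use $(\extd D)D=q^2D\extd D$ to finish, and conclude universality because only forced relations were used. However, there is a genuine gap in your count of what condition (1) yields, and it breaks your final step. Differentiability is not exhausted by the twelve scalar equations you extract from the $\C_q^2\tens\Omega^1(\C_q[GL_2])$ components of the degree-one relations: you must also use the constraints coming from the Leibniz rule applied to the relations of $\C_q[GL_2]$ itself — equivalently, well-definedness of $\Delta_{R*}$ on the degree-two relations $(\extd x_i)^2=0$ and $\extd x_2\extd x_1=-q^{-1}\extd x_1\extd x_2$, which carry more information because $x_1\extd x_2$ and $x_2\extd x_1$ are linearly independent in $\Omega^1(\C_q^2)$, unlike $x_1x_2$ and $x_2x_1$ in $\C_q^2$. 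For example, your combined identity $(\extd a)c+q(\extd c)a=q^2a\extd c+q^3c\extd a$, together with $(\extd c)a+c\extd a=q(\extd a)c+qa\extd c$ (which is $\extd$ applied to $ca=qac$ and so holds in any calculus), forces $(\extd a)c=qc\extd a$ and $(\extd c)a=qa\extd c+(q^2-1)c\extd a$ separately; likewise $(\extd b)d$ and $(\extd d)b$ are determined. So condition (1) actually pins down twelve of the sixteen products, leaving only $(\extd a)d,(\extd b)c,(\extd c)b,(\extd d)a$, subject to a rank-three linear system (your two remaining $x_1x_2$-identities plus the Leibniz relations from $cb=bc$ and $da-ad=(q-q^{-1})bc$): the residual ambiguity is one parameter, not four. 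This stronger intermediate statement is precisely what the paper's proof records before turning to the determinant.

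The undercount then makes your determinant step unworkable as described. Expanding $(\extd D)D-q^2D\extd D$ requires moving the quadratic element $D$ past $\extd a,\ldots,\extd d$, i.e.\ it uses the very bimodule relations being sought; with eight products undetermined the resulting conditions are quadratic in the unknowns, so there is no ``linear system'' to solve, and nothing in your argument guarantees existence or uniqueness of a solution. In the paper's proof this problem disappears because the twelve relations and the four residual constraints are secured first: normal-ordering the determinant relation using only known relations collapses it to $a(\extd a)d^2-ad(\extd a)d+qbcd(\extd a)-bc(\extd a)d=0$, i.e.\ $[\extd a,d]D=0$, hence $(\extd a)d=d\extd a$, after which the remaining three products follow linearly from the constraints already in hand. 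The repair is therefore to fold the Leibniz (equivalently degree-two) information into your first step; once that is done, your determinant step becomes the paper's and your universality conclusion stands as stated.
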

\begin{proof} Requiring only $\Delta_{R*}((\extd x_i)x_i)=\Delta_{R*}(q^2 x_i \extd x_i)$, we obtain all the relations stated in Lemma~\ref{4Dcalcisom} except for those stated for $(\extd a)d, (\extd b)c$, $(\extd c)b$, $(\extd d)a$, $(\extd c)\extd b$ and $(\extd d)\extd a$, but we also get the following additional relations
\[(\extd c) b +q^{-1}(\extd a)d = b\extd c + q d\extd a, \quad (\extd b) c +q^{-1}(\extd a)d = c\extd b + q d\extd a\]
\[(\extd c)b + q(\extd d)a = q^2 b\extd c + (q^2-1)c\extd b + q a\extd d + q(q^2-1)d \extd a\]
\[(\extd b)c + q(\extd d)a = q^2 c\extd b + (q^2-1)b\extd c + q a\extd d + q(q^2-1)d \extd a.\]

Clearly $\C[D,D^{-1}]$ is a sub-Hopf algebra of $\C_q[GL_2]$ with $D$ the $q$-determinant. That the inclusion extends to differentials amounts to requiring $(\extd D)D = q^2 D\extd D$ in the bigger calculus. If we impose this then the above conditions imply that
\[a(\extd a)d^2 - ad(\extd a)d + q bcd (\extd a) - bc(\extd a) d=0.\]
One can simplify this further by moving the elements of degree 0 to the right by using the relations in the lemma already known,  to obtain $[\extd a, d]D =0$ and hence $[\extd a, d]=0$. The remaining relations of $\Omega^1(\C_q[GL_2])$  in Lemma~\ref{4Dcalcisom} then follow. We also find that
\[\extd D = a\extd d - q^{-1}b\extd c -q^{-1}c\extd b +q^{-2}d\extd a\]
\[(\extd t^i{}_j)D = q^2 D \extd t^i{}_j, \quad (\extd D)t^i{}_j = t^i{}_j \extd D + (q^2-1)D\extd t^i{}_j.\]

By applying $\extd$ to the stated bimodule relations, we also obtain 
\[(\extd a)^2=(\extd b)^2 = (\extd c)^2 = (\extd d)^2= (\extd D)^2=0\]
\[\extd b\extd a = -q^{-1}\extd a\extd b, \quad \extd c\extd a = -q^{-1}\extd a\extd c, \quad \extd d\extd b = -q^{-1}\extd b\extd c, \quad \extd d\extd c= -q^{-1}\extd c\extd d\]
\[\extd c\extd b = -\extd b\extd c + (q-q^{-1})\extd a\extd d , \quad \extd d\extd a= -\extd a\extd d, \quad \extd D\extd t^i{}_j = -q^{-2}\extd t^i{}_j \extd D\]
for the degree 2 relations. \end{proof}

Universal here means we used only the conditions stated to derive the relations, but there could be quotients with the same two properties.  It seems likely that similar results apply for all $\C_q[GL_n]$, but this is beyond our scope here (it would require rather more machinery  than we have recalled here, such as properties of the quantum Killing form in \cite{Ma:hod}). We are now ready to state our example of Theorem~\ref{calcbos q-Hecke}. 

\begin{proposition}\label{propexP} Let $A=\C_q[GL_2]$ with $\Omega(\C_q[GL_2])$ its strongly bicovariant exterior algebra  in Lemma~\ref{4Dcalcisom}.  Let $B= \C_q^2$ be viewed as a braided-Hopf algebra in $\CM^A_A$ and $\Omega(\C_q^2)$ as a super braided Hopf algebra in the category of $\Omega(\C_q[GL_2])$-crossed modules with (co)action and coproduct
\[ x_1 \ra \begin{pmatrix}
a~~ & b \\ c~~ & d\end{pmatrix} = \begin{pmatrix}q^{-2} x_1~~ & -q^{-1}\lambda x_2\\0~~ & q^{-1}x_1\end{pmatrix},\quad  x_2 \ra \begin{pmatrix}a~~ & b \\ c~~ & d\end{pmatrix} = \begin{pmatrix}q^{-1} x_2~~ & 0\\ 0~~ & q^{-2}x_2
\end{pmatrix},\]
\[x_1 {\ra} \begin{pmatrix}\extd a~~ & \extd b\\\extd c~~ & \extd d \end{pmatrix} = \begin{pmatrix} -q^{-1}\lambda\extd x_1~~ \  & -q^{-1}\lambda \extd x_2\\ 0~~ \  & 0\end{pmatrix}, \quad \extd x_1 \ra \begin{pmatrix} a~~  & b\\ c~~ & d
\end{pmatrix} =\begin{pmatrix}\extd x_1~~ \  & 0 \\ 0~~ \  & q^{-1}\extd x_1\end{pmatrix}, \]
\[x_2 {\ra} \begin{pmatrix}\extd a~~ & \extd b\\\extd c~~ & \extd d\end{pmatrix} = \begin{pmatrix} 0~~ \  & 0 \\ -q^{-1}\lambda\extd x_1~~ \  & -q^{-1}\lambda \extd x_2\end{pmatrix}, \quad \extd x_2 \ra \begin{pmatrix},a~~ & b\\ c~~ & d
\end{pmatrix} = \begin{pmatrix}
q^{-1}\extd x_2~~ \  & 0\\ q^{-1}\lambda\extd x_1~~ \  & \extd x_2
\end{pmatrix}, \]
\begin{align*}
\extd x_i \ra \extd t^k{}_l = 0,\quad &\Delta_R x_1 = x_1 \tens a + x_2 \tens c,\quad  \Delta_R x_2 = x_1 \tens b + x_2 \tens c \label{act on C_q^2},\\
&\Delta_{R*} \extd x_1 = \extd x_1 \tens a + \extd x_2 \tens c + x_1 \tens \extd a + x_2 \tens \extd c,\\ 
&\Delta_{R*} \extd x_2 = \extd x_1 \tens b + \extd x_2 \tens d + x_1 \tens \extd b + x_2 \tens \extd d,\\
&\underline{\Delta}x_i=x_i\tens 1+1\tens x_i,\quad \underline{\Delta}_*\extd x_i=\extd x_i\tens 1+1\tens \extd x_i,
\end{align*}
where $\lambda = q-q^{-1}$. Its super bosonisation is a strongly bicovariant exterior algebra $\Omega(\C_q[GL_2]\rbiprod\C^2):=\Omega(\C_q[GL_2])\rbiprod \Omega(\C_q^2)$ with sub-exterior algebras $\Omega(\C_q[GL_2])$, $\Omega(\C_q^2)$  and cross relations and super coproduct
\[x_1 \begin{pmatrix}
a~~ & b\\ c~~ & d
\end{pmatrix} = \begin{pmatrix}
q^{-2}a x_1~~ \ & q^{-1} bx_1  -q^{-1}\lambda ax_2 \\
q^{-2}c x_1~~ \ & q^{-1}dx_1  -q^{-1}\lambda cx_2
\end{pmatrix}, \quad x_2 \begin{pmatrix}
a~~ & b\\ c~~ & d
\end{pmatrix} = \begin{pmatrix}
q^{-1}ax_1~~ \ & q^{-2}bx_2\\
q^{-1}cx_2~~ \ & q^{-2}dx_2
\end{pmatrix},\]
\[\extd x_1 \begin{pmatrix}
a~~ & b\\ c~~ & d
\end{pmatrix} = \begin{pmatrix}
a\extd x_1 ~~ \ & q^{-1}b\extd x_1\\
c\extd x_1 ~~ \ & q^{-1}d\extd x_1
\end{pmatrix}, \quad \extd x_2 \begin{pmatrix}
a~~ & b\\ c~~ & d
\end{pmatrix} = \begin{pmatrix}
q^{-1}a\extd x_2 +q^{-1}\lambda b\extd x_1 ~~ \ & b \extd x_2\\
q^{-1}c\extd x_2 +q^{-1}\lambda c\extd x_2 ~~ \ & d \extd x_2
\end{pmatrix},\]
\[x_1 \begin{pmatrix}
\extd a~~ &\extd b\\ \extd c~~ &\extd d
\end{pmatrix} = \begin{pmatrix}
q^{-2}(\extd a)x_1  -q^{-1}\lambda a\extd x_1~~ \  \ & q^{-1}(\extd b)x_1  -q^{-1}\lambda((\extd a)x_2+a\extd x_2)\\
q^{-2}(\extd c)x_1  -q^{-1}\lambda c\extd x_1 ~~ \  \ & q^{-1}(\extd d)x_1 -q^{-1}\lambda((\extd c)x_2 + c\extd x_2) 
\end{pmatrix},\]
\[x_2 \begin{pmatrix}
\extd a~~ &\extd b\\ \extd c~~ &\extd d
\end{pmatrix} = \begin{pmatrix}
q^{-1}(\extd a)x_2 -q^{-1}\lambda b\extd x_1 ~~ \ \ & q^{-2}(\extd b)x_2  -q^{-1}\lambda b\extd x_2\\
q^{-1}(\extd c)x_2 -q^{-1}\lambda d\extd x_1 ~~ \ \ & q^{-2}(\extd d)x_2  -q^{-1}\lambda d\extd x_2
\end{pmatrix},\]
\[\extd x_1 \begin{pmatrix}
\extd a ~~ &\extd b\\ \extd c~~ &\extd d
\end{pmatrix} = -\begin{pmatrix}
\extd a\extd x_1 ~~ \ & q^{-1}\extd b\extd x_1\\
\extd c\extd x_1 ~~ \ & q^{-1}\extd d\extd x_1
\end{pmatrix},\]
\[\extd x_2 \begin{pmatrix}
\extd a ~~ &\extd b\\ \extd c~~ &\extd d
\end{pmatrix} = -\begin{pmatrix}
q^{-1}\extd a\extd x_2 +q^{-1}\lambda \extd b\extd x_1~~ \ \ & \extd b\extd x_2\\
q^{-1}\extd c\extd x_2 +q^{-1}\lambda \extd d\extd x_1~~ \  \ & \extd d\extd x_2
\end{pmatrix},\]
\[\Delta x_i = 1\tens x_i + \Delta_R(x_i), \quad \Delta_*(\extd x_i) = 1\tens \extd x_i + \Delta_{R*}(\extd x_i)\]
and the coproduct of $\Omega(\C_q[GL_2])$. Moreover, the canonical coaction $\Delta_R:\C_q^2\to \C_q^2\tens\C_q[GL_2]\rbiprod\C_q^2$ is differentiable. 
\end{proposition}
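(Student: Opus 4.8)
The plan is to recognise Proposition~\ref{propexP} as the explicit $n=2$ instance of Theorem~\ref{calcbos q-Hecke}, specialised to the standard $q$-Hecke $GL_2$ $R$-matrix $R=R_0$ from (\ref{Rsl2}) with $\alpha=0$ and with $A=\C_q[GL_2]=A(R)[D^{-1}]$, $V(R)=\C_q^2$. Since that theorem already establishes that $\Omega(V(R))$ is a super-braided-Hopf algebra in $\CM^{\Omega(A)}_{\Omega(A)}$ with $x_i,\extd x_i$ primitive, that the super bosonisation $\Omega(A)\rbiprod\Omega(V(R))$ is strongly bicovariant, and (via Corollary~\ref{cobosdif}) that the canonical coaction $\Delta_R:\C_q^2\to\C_q^2\tens\C_q[GL_2]\rbiprod\C_q^2$ is differentiable, the only remaining content is to unpack the compact $R$-matrix formulas of Theorem~\ref{calcbos q-Hecke} into the component-wise statements displayed in the proposition. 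There is no new structural verification to perform; the work is purely the substitution of a $4\times4$ matrix.

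First I would record the explicit matrices needed: from (\ref{Rsl2}) with $\alpha=0$ one reads off $R$, and then computes $R_{21}=PRP$, $R^{-1}$, $R_{21}^{-1}$ and the permutation $P$. Substituting these into the four action formulas of Theorem~\ref{calcbos q-Hecke}, namely $\bx_1\ra\bt_2=\bx_1 q^{-1}R_{21}^{-1}$, $(\extd\bx_1)\ra\bt_2=\extd\bx_1 q^{-1}R$, $\bx_1\ra\extd\bt_2=(q^{-2}-1)\extd\bx_1 P$ and $\extd\bx_1\ra\extd\bt_2=0$, and reading off the individual matrix entries with $\bt=(t^i{}_j)=\begin{pmatrix}a&b\\c&d\end{pmatrix}$, reproduces exactly the displayed action matrices of $x_1,x_2,\extd x_1,\extd x_2$ on $\begin{pmatrix}a&b\\c&d\end{pmatrix}$ and on $\begin{pmatrix}\extd a&\extd b\\\extd c&\extd d\end{pmatrix}$. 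The crossed-module and braiding conditions of Lemma~\ref{extension ra} hold automatically as part of Theorem~\ref{calcbos q-Hecke}, but it is worth confirming one row of each, e.g. $\Delta_{R*}((\extd x_1)\ra a)$, as a consistency check on the component expressions.

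Next I would unpack the cross relations and coproducts in the same way. The degree-0 and mixed degree-1 bimodule relations come directly from $\bx_1\bt_2=\bt_2\bx_1 q^{-1}R_{21}^{-1}$, $(\extd\bx_1)\bt_2=\bt_2\extd\bx_1 q^{-1}R$ and $\bx_1\extd\bt_2=(\extd\bt_2)\bx_1 q^{-1}R_{21}^{-1}+(q^{-2}-1)\bt_2\extd\bx_1 P$, while the top relation $\extd\bx_1\extd\bt_2=-\extd\bt_2\extd\bx_1 q^{-1}R$ gives the degree-2 cross relations. The coproduct and coaction formulas $\Delta\bx=1\tens\bx+\bx\tens\bt$ and $\Delta_*\extd\bx=1\tens\extd\bx+\extd\bx\tens\bt+\bx\tens\extd\bt$ translate component-wise into the stated $\Delta x_i$, $\Delta_*\extd x_i$, $\Delta_R x_i$ and $\Delta_{R*}\extd x_i$. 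The bimodule relations of $\Omega(\C_q[GL_2])$ needed when moving degree-0 generators past differentials are those listed in Lemma~\ref{4Dcalcisom}.

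The main obstacle is purely bookkeeping: the $GL_2$ $R$-matrix is $4\times4$, so each compact formula expands into several scalar relations, and one must keep the distinction between $R$ and $R_{21}^{-1}$ and the extra $P$-term straight, since these govern whether an $x_i$ picks up a $q^{-1}$, a $q^{-2}$, or an off-diagonal $\lambda=q-q^{-1}$ correction. The only place where genuine checking rather than transcription is advisable is the consistency of the degree-2 cross relations with the graded Leibniz rule: applying $\extd$ to each degree-1 cross relation and using the relations of Lemma~\ref{4Dcalcisom} must reproduce the displayed $\extd x_i\begin{pmatrix}\extd a&\extd b\\\extd c&\extd d\end{pmatrix}$ relations, and one verifies, as in the last display of the proof of Theorem~\ref{calcbos q-Hecke}, that the coefficient combinations collapse using $\lambda P=q^{-1}(R_{21}^{-1}-R)$. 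Differentiability of the canonical coaction is then immediate from Corollary~\ref{cobosdif}.
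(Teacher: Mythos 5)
Your proposal is correct and matches the paper's own proof, which likewise simply invokes Theorem~\ref{calcbos q-Hecke} with the $R$-matrix of (\ref{Rsl2}) at the $q$-Hecke normalisation $\alpha=0$, computes the action $x_i\ra t^k{}_l=q^{-1}x_j(R_{21}^{-1})^j{}_i{}^k{}_l$ and the remaining formulas componentwise, and obtains differentiability of the canonical coaction from Corollary~\ref{cobosdif}. The only caution is notational: in your final Leibniz-rule check the identity $\lambda P=q^{-1}(R_{21}^{-1}-R)$ uses the theorem's $\lambda=q^{-2}-1$, not the proposition's $\lambda=q-q^{-1}$.
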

\begin{proof} We apply Theorem~\ref{calcbos q-Hecke}. The $\C_q[GL_2]$-crossed module structure has coaction $\Delta_R x_i = x_j \tens t^j{}_i$ and right action $x_i \ra t^k{}_l = q^{-1} x_j (R_{21}^{-1})^j{}_i{}^k{}_l $ which computes as shown. The rest follows similarly by computation for our choice of $R$ as in (\ref{Rsl2}) with the $q$-Hecke normalisation $\alpha=0$. The coaction of $\C_q[GL_2]\rbiprod\C_q^2$ at the end is $\Delta_R x_i=1\tens x_i+ x_j\tens t^j{}_i$.  \end{proof}

\begin{remark}  $\C_q[GL_2]\rbiprod\C_q^2$ is a quantum deformation of a maximal parabolic $P\subset SL_3$ and one can check that this is indeed isomorphic to a quotient of $\C_q[SL_3]$. We have found its strongly bicovariant calculus and moreover it coacts differentiably on $\C_q^2$ by our results. 
The same construction still works when $q$ is a primitive $n$-th odd  root of unity. Here we take $A=c_q[GL_2]=\C_q[GL_2]/(a^n-1, b^n, c^n, d^n-1)$ and $B=c_q^2=\C_q^2/(x_1^n, x_2^n)$ with $c_q[GL_2]\rbiprod c_q^2$ and strongly bicovariant exterior algebra as above with the additional relations $c_q[GL_2]$ and $c_q^2$.
\end{remark}

\section{Differentials on bicrossproduct Hopf algebras}\label{secbicross}

We conclude the paper with quantum differentials for the second case where the Hopf algebra has a cross coproduct coalgebra as typical for inhomogeneous quantum group coordinate algebras, namely the bicrossproduct  construction \cite{Ma:phy, Ma:book}.

\subsection{Exterior algebras by super bicrossproduct}
Let $H$ and $A$ be  Hopf algebras with $A$ a left $H$-module algebra by left action $\la$ and $H$ a right $A$-comodule coalgebra by $\beta : H \to H \tens A$ denoted $\beta(h) = h^{\barnot}\tens h^{\baro}$. We suppose in this section that $\la$ and $\beta$ are compatible as in \cite[Thm.~6.2.2]{Ma:book} such that the cross (or smash) product by $\la$ and cross coproduct by $\beta$ form a bicrossproduct Hopf algebra $A \bicross H$. In the same spirit as  Section \ref{secbiprod}, we now construct a strongly bicovariant exterior algebra on $A\bicross H$ by a super bicrossproduct.

To this end, suppose that $\Omega(A)$ and $\Omega(H)$ are strongly bicovariant exterior algebras and that $\la$ is differentiable. This  entails that $\Omega(A)$ is $H$-covariant i.e. an $H$-module algebra by an action $\la:H\tens \Omega(A)\to \Omega(A)$ commuting with $\extd$ and that this extends further to a super $\Omega(H)$-module algebra by  $\la:\Omega(H)\underline{\tens} \Omega(A)\to \Omega(A)$ such that
\begin{equation}\label{action alpha}
\extd_A(\eta \la \omega) = (\extd_H \eta) \la \omega + (-1)^{|\eta|}\eta \la (\extd_A \omega)
\end{equation}
for all $\omega\in \Omega(A)$ and $\eta \in \Omega(H)$. 

On the dual side, we assume that $\beta$ extends to a degree-preserving super coaction $\beta_* : \Omega(H)\to \Omega(H)\underline{\tens} \Omega(A)$, denoted by $\beta_*(\eta) =\eta^{\barnot *} \tens \eta^{\baro *}$, such that $\Omega(H)$ is a super $\Omega(A)$-comodule coalgebra and
\begin{equation}\label{coaction beta}
\beta_*(\extd_H \eta) = \extd_H \eta^{\barnot *} \tens \eta^{\baro *} +(-1)^{|\eta^{\barnot *}|} \eta^{\barnot *}\tens \extd_A \eta^{\baro *}. 
\end{equation}

If, moreover, $\la$ and $\beta_*$ obey the super bicrossproduct conditions:
\begin{align}
\epsilon(\eta \la \omega) &= \epsilon(\eta)\epsilon(\omega)\label{bicross1}\\
\Delta_*(\eta \la \omega) &= (-1)^{(|\eta\o{}^{\baro *}
|+|\eta\t|)|\omega\o|} \eta\o{}^{\barnot *} \la \omega\o \tens \eta\o{}^{\baro *}(\eta\t \la \omega\t) \label{bicross2}\\
\beta_*(\eta\xi) &= (-1)^{(|\eta\o{}^{\baro *}
|+|\eta\t|)|\xi^{\barnot *}|} \eta\o{}^{\barnot *}\xi^{\barnot *} \tens \eta\o{}^{\baro *}(\eta\t \la \xi^{\baro *}) \label{bicross3}\end{align}
\begin{equation}
 (-1)^{|\omega||\eta\t{}^{\baro *}|+|\eta\o||\eta\t{}^{\barnot *}|} \eta\t{}^{\barnot *}\tens (\eta\o \la \omega)\eta\t{}^{\baro *} =  \eta\o{}^{\barnot *}\tens \eta\o{}^{\baro *}(\eta\t \la \omega)\label{bicross4}
\end{equation}
then we have a bicrossproduct super Hopf algebra  $\Omega(A)\bicross \Omega(H)$  with product and coproduct
\[(\omega\tens \eta)(\tau \tens \xi)= (-1)^{|\eta\t||\tau|}\omega(\eta\o \la \tau)\tens \eta\t \xi\]
\[\Delta_*(\omega\tens \eta)= (-1)^{|\omega\t||\eta\o{}^{\barnot *}|}\omega\o \tens \eta\o{}^{\barnot *}\tens \omega\t\eta\o{}^{\baro *}\tens \eta\t\]
for all $\omega, \tau \in \Omega(A)$ and for all $\eta, \xi \in \Omega(H)$. We omit the proof since this is similar to the usual version \cite{Ma:phy, Ma:book} with some extra signs. 

\begin{theorem}\label{calc bicross}
Let $A, H$ be Hopf algebras forming a bicrossproduct $A\bicross H$ and let $\Omega(A)$ and $\Omega(H)$ be strongly bicovariant exterior algebras with $\la, \beta_*$ obey the conditions (\ref{action alpha})-(\ref{bicross4}). Then $\Omega(A\bicross H):=\Omega(A)\bicross \Omega(H)$ is a strongly bicovariant exterior algebra on $A\bicross H$ with differential
\[\extd(\omega\tens \eta)=\extd_A \omega \tens \eta + (-1)^{|\omega|}\omega \tens \extd_H \eta,\]
for all $\omega \in \Omega(A)$ and $\eta \in \Omega(H)$.
\end{theorem}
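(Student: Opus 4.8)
The plan is to follow the same three-step template used for Theorems~\ref{thm super dcross} and~\ref{calcbos}, now adapted to the bicrossproduct product and coproduct. Since the super-bialgebra (resp. super-Hopf algebra) structure of $\Omega(A)\bicross\Omega(H)$ is already in place from the discussion just above, and $\Delta_*$ is manifestly degree-preserving and restricts in degree zero to the bicrossproduct coproduct of $A\bicross H$, what remains is to show that $\extd$ makes this a DGA and is a super-coderivation.

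First I would record that $\extd(\omega\tens 1)=\extd_A\omega$ and $\extd(1\tens\eta)=\extd_H\eta$, so that $\extd$ restricts to $\extd_A,\extd_H$ on the two factor subalgebras and $\extd^2=0$ is immediate. The graded Leibniz rule then needs to be checked only on a mixed product; I would compute $\extd(\eta\omega)$ where $\eta\omega=(1\tens\eta)(\omega\tens 1)=(-1)^{|\eta\t||\omega|}(\eta\o\la\omega)\tens\eta\t$, apply the cross-product differential, and then expand $\extd_A(\eta\o\la\omega)$ using the differentiability hypothesis (\ref{action alpha}) together with $(\extd_H\eta)\o\tens(\extd_H\eta)\t$ from the super-coderivation property of $\extd_H$. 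Collecting the three resulting terms and using that both $\Delta_*$ and $\la$ preserve total degree, they reassemble exactly into $(\extd\eta)\omega+(-1)^{|\eta|}\eta\,\extd\omega$.

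For the super-coderivation property I cannot simply invoke Lemma~\ref{lemma diff tens prod} as in Theorem~\ref{thm super dcross}, because here $\Delta_*$ is a genuine bicrossproduct coproduct rather than a super tensor coproduct. Instead, since $\extd$ is a super-derivation and $\Delta_*$ a super-algebra map, it suffices to verify (\ref{super-coderivation}) on the two generating subalgebras. On $\omega\tens 1$ one has $\Delta_*(\omega\tens 1)=\omega\o\tens 1\tens\omega\t\tens 1$ (using $\beta_*(1)=1\tens 1$), so the identity reduces to the coderivation property already holding in $\Omega(A)$. On $1\tens\eta$ one has $\Delta_*(1\tens\eta)=1\tens\eta\o{}^{\barnot *}\tens\eta\o{}^{\baro *}\tens\eta\t$; here I would write $\Delta_*\extd(1\tens\eta)=\Delta_*(1\tens\extd_H\eta)$, expand $(\extd_H\eta)\o\tens(\extd_H\eta)\t$ by the coderivation of $\extd_H$, and then apply $\beta_*$, invoking the compatibility (\ref{coaction beta}) precisely on the summand containing $\extd_H\eta\o$. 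Comparing with $(\extd\tens\id+(-1)^{| \ |}\id\tens\extd)\Delta_*(1\tens\eta)$, evaluated by letting $\extd$ act on each of the two cross-product tensor factors, the three terms on each side match once one uses $|\eta\o{}^{\barnot *}|+|\eta\o{}^{\baro *}|=|\eta\o|$.

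The main obstacle is not conceptual but the disciplined sign bookkeeping: every application of the product, of $\beta_*$, and of the coderivation property introduces Koszul signs that must be tracked, and the matching hinges repeatedly on the degree-preservation of $\Delta_*$, $\la$ and $\beta_*$. The one genuinely structural point, as opposed to routine computation, is that the coderivation check must proceed via the generator argument together with (\ref{coaction beta}), since — unlike the double cross product case — the coproduct does not factorise as a super tensor coproduct.
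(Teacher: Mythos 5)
Your proposal is correct and amounts to essentially the same argument as the paper's: the paper disposes of this theorem in one line by noting that the algebra is the left-right reversal of a right-handed super cross product and the coproduct is a super right cross coproduct, both already covered in Theorem~\ref{calcbos} with the same form of $\extd$, and your direct verifications (the graded Leibniz rule via (\ref{action alpha}) on the mixed product $(1\tens\eta)(\omega\tens 1)$, and the super-coderivation property checked on the two generating subalgebras using (\ref{coaction beta}) together with degree-preservation of $\beta_*$) are precisely what that reduction unpacks to. Your structural observations — that the coproduct is not a super tensor coproduct so Lemma~\ref{lemma diff tens prod} does not apply, and that the generator argument is legitimate because $\Delta_*$ is a super-algebra map and $\extd\tens\id+(-1)^{|\ |}\id\tens\extd$ is a super-derivation of the super tensor product — are exactly the points implicit in the paper's appeal to Theorem~\ref{calcbos}.
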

\begin{proof}
This is clear since the algebra structure here is the left-right reversal of a right-handed super cross product and the super coalgebra structure is a super right cross coproduct, both of which were already covered in Theorem \ref{calcbos} with the same form of $\extd$.
\end{proof}

In practice, we typically only need to know that the bicrossproduct action $\la$ and coaction $\beta$ extend to degree 1 to construct the super bicrossproduct, since the extension to higher degrees is determined. Note also that if  $\Omega(A)$ is an $H$-module algebra and $\Omega(H)$ is the maximal prolongation of $\Omega^1(H)$, then by the left-hand reversal of Lemma \ref{extension ra}(i), $\Omega(A)$ is a super $\Omega(H)$-module algebra such that  (\ref{action alpha}) holds. 

\begin{lemma}
Let $A, H$ be Hopf algebras forming a bicrossproduct $A \bicross H$. Let $\Omega(A)$ be a super left $\Omega(H)$-module algebra such that (\ref{action alpha}) holds, suppose that the coaction  extends to a well-defined map $\beta_* : \Omega^1(H)\to \Omega^1(H)\tens A \oplus H \tens \Omega^1(A)$ by 
\[ (1)\quad\quad 
\beta_*(h\extd_H g) = h\o{}^{\barnot} \extd_H g^{\barnot} \tens h\o{}^{\baro} (h\t \la g^{\baro})+ h\o{}^{\barnot} g^{\barnot} \tens h\o{}^{\baro}(h\t\la \extd_A g^{\baro})\]
for all $h,g\in H$, that the action obeys  $\eps(\eta\la\omega)=\eps(\eta)\eps(\omega)$ for all $\eta\in \Omega(H)$, $\omega\in \Omega(A)$ and that
\[(2)\qquad
\Delta_*(h \la \extd_A a) = h\o{}^{\barnot} \la \extd_A a\o \tens h\o{}^{\baro}(h\t \la a\t)+ h\o{}^{\barnot} \la a\o \tens h\o{}^{\baro}(h\t \la \extd_A a\t)\]
\[(3)\quad\quad\quad\quad\quad\quad\quad\quad\quad
h\t{}^{\barnot} \tens (h\o \la \extd_A a)h\t{}^{\baro} = h\o{}^{\barnot}\tens h\o{}^{\baro}(h\t \la \extd_A a)\quad\quad\quad\quad\]
for all $h\in H$ and $a \in A$. If both $\Omega(H),\Omega(A)$ are maximal prolongations  then $\beta_*$ extends to all degrees obeying (\ref{bicross2})-(\ref{bicross4}) and form $\Omega(A)\bicross \Omega(H)$ by Theorem \ref{calc bicross}.
\end{lemma}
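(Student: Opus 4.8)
The plan is to recognise that hypotheses (1)--(3) are precisely the degree-$1$ ``seeds'' of the full super-bicrossproduct data, and to bootstrap them to all degrees using that $\Omega(H)$ and $\Omega(A)$ are maximal prolongations, in the spirit of Lemma~\ref{maxDeltaR} and the left-handed reversal of Lemma~\ref{extension ra}(i). First I would record the three identifications. Setting $h=1$ in (1) and using $1_H\la(\ ) =\id$ gives $\beta_*(\extd_H g)=\extd_H g^{\barnot}\tens g^{\baro}+g^{\barnot}\tens\extd_A g^{\baro}$, which is the differentiability condition (\ref{coaction beta}) in degree $0$; moreover (1) itself is the instance of (\ref{bicross3}) at $\eta=h\in H$, $\xi=\extd_H g$ (once this degree-$0$ form of (\ref{coaction beta}) is substituted for $\beta_*(\extd_H g)$). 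Likewise (2) is (\ref{bicross2}) and (3) is (\ref{bicross4}), each taken at $\eta=h\in H$ and $\omega=\extd_A a$, the nonzero signs collapsing because the $\CR$-free arguments there sit in degree $0$.

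The central step is to show $\beta_*$ is well defined on $\Omega^2(H)$; since $\Omega(H)$ is a maximal prolongation this suffices for all degrees. Following the template of Lemma~\ref{maxDeltaR}, I would take a relation $\sum h\extd_H g=0$ in $\Omega^1(H)$, which entails the quadratic relation $\sum\extd_H h\extd_H g=0\in\Omega^2(H)$. Applying $\beta_*$ via (1) and separating the $\Omega^1(H)\tens A$ and $H\tens\Omega^1(A)$ summands gives two vanishing equations. Applying $\extd_H\tens\id$ and $(-1)^{|\ |}\id\tens\extd_A$, the two halves of the total differential $\extd_{H\tens A}$, to these and regrouping produces exactly the three graded components of $\beta_*(\extd_H h\extd_H g)$, lying in $\Omega^2(H)\tens A$, in $\Omega^1(H)\tens\Omega^1(A)$, and in $H\tens\Omega^2(A)$. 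Each vanishes, so $\beta_*$ descends to $\Omega^2(H)$ and hence to all of $\Omega(H)$, and by construction it commutes with $\extd$ in the graded sense, yielding (\ref{coaction beta}) in every degree.

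With $\beta_*$ in hand I would verify the coalgebra-level axioms and the remaining compatibilities. That $\Omega(H)$ is a super $\Omega(A)$-comodule coalgebra follows because coassociativity and counity hold on the generators (degrees $0,1$) by the degree-$0$ bicrossproduct together with (1), while both sides of each axiom are comodule-coalgebra morphisms determined on generators. For the super-bicrossproduct conditions, (\ref{bicross1}) is the assumed counit compatibility of $\la$; for (\ref{bicross2})--(\ref{bicross4}) both sides are super-module-algebra (respectively super-comodule) maps that agree in degree $0$ by the ordinary bicrossproduct axioms and in degree $1$ by (2), (3) and formula~(1), hence agree on all of $\Omega(A)\underline{\tens}\Omega(H)$ once the maximal prolongation guarantees no new relations beyond the quadratic ones already checked. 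Theorem~\ref{calc bicross} then assembles $\Omega(A)\bicross\Omega(H)$ as a strongly bicovariant exterior algebra, completing the proof.

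I expect the main obstacle to be the well-definedness step on $\Omega^2(H)$: one must correctly identify the three graded components of $\beta_*$ on the relation $\extd_H h\extd_H g$ and track the super-grading signs produced when the $\extd_H\tens\id$ and $(-1)^{|\ |}\id\tens\extd_A$ images of the two seed equations are combined for the mixed $\Omega^1(H)\tens\Omega^1(A)$ term. Everything downstream is a routine morphism-agreement-on-generators argument once the maximal prolongation has been invoked.
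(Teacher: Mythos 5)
Your identifications in the first paragraph and your central well-definedness step are sound and coincide with the paper's own argument: the paper likewise takes a relation $h\extd_H g=0$ in $\Omega^1(H)$, applies $\beta_*$ via (1) to obtain the two vanishing components, and then applies $\extd_H\tens\id$ and $\id\tens\extd_A$ in the three combinations to produce the $\Omega^2(H)\tens A$, $\Omega^1(H)\underline{\tens}\Omega^1(A)$ and $H\tens\Omega^2(A)$ parts of $\beta_*(\extd_H h\,\extd_H g)$, exactly as you propose.

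The gap is in your final paragraph, where you assert that (\ref{bicross2})--(\ref{bicross4}) hold ``in degree 1 by (2), (3) and formula (1)'' and that the rest is a routine agreement-on-generators argument. Hypotheses (2) and (3) give only those degree-1 instances in which the differential sits on the $\Omega(A)$ side, i.e.\ $\eta=h\in H$ acting on $\omega=\extd_A a$. Since $\Omega(H)$ is generated by $H$ and $\extd_H H$, any generators argument also needs the instances in which the differential sits on the $\Omega(H)$ side --- $\Delta_*((\extd_H h)\la a)$, $\Delta_*((h\extd_H g)\la a)$ and $\Delta_*((\extd_H h)\la\extd_A a)$ for (\ref{bicross2}), and the analogue of (3) with $\extd_H h$ in place of $h$ for (\ref{bicross4}) --- and these are \emph{not} among the hypotheses; they must be derived. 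That derivation is the bulk of the paper's proof and is exactly where (\ref{action alpha}) and the strong bicovariance of $\Omega(A)$ enter: one writes $(\extd_H h)\la a=\extd_A(h\la a)-h\la\extd_A a$ using (\ref{action alpha}), applies $\Delta_*$, and uses the super-coderivation property $\Delta_*\extd_A=(\extd_A\tens\id+(-1)^{|\ |}\id\tens\extd_A)\Delta_*$ together with the degree-0 bicrossproduct axiom and (2) to land back on the right-hand side of (\ref{bicross2}); similarly the degree-1 case of (\ref{bicross4}) is obtained by applying $\extd_H\tens\id+\id\tens\extd_A$ to the degree-0 identity combined with (3); and the consistency of (1) with (\ref{bicross3}) for products in the other order, $\beta_*((\extd_H h)g)$, must be checked via the Leibniz rule, which the paper also does. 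Your proposal never invokes (\ref{action alpha}) or the coderivation property of $\Delta_*$ on $\Omega(A)$ at this stage, so as written your induction has no base for precisely those generator cases: the step you dismiss as routine is in fact the core of the proof.
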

\begin{proof}
(i) First we first check that $\beta_*((\extd_H h)g)$ also satisfies (\ref{bicross3}) for products from the other side,
\begin{align*}
\beta_*((\extd_H h)g) =& \Delta_{R*}(\extd_H(hg) - h\extd_H g)\\
=& \extd_H(hg)^{\barnot} \tens (hg)^{\baro} + (hg)^{\barnot}\tens \extd_A(hg)^{\baro} - h\o{}^{\barnot} \extd_H g^{\barnot} \tens h\o{}^{\baro} (h\t \la g^{\baro})\\
&- h\o{}^{\barnot} g^{\barnot} \tens h\o{}^{\baro}(h\t\la \extd_A g^{\baro})\\
=& \extd_H(h\o{}^{\barnot}g^{\barnot}) \tens h\o{}^{\baro}(h\t \la g^{\baro}) + h\o{}^{\barnot}g^{\barnot} \tens \extd_A(h\o{}^{\baro}(h\t \la g^{\baro}))\\
&-h\o{}^{\barnot} \extd_H g^{\barnot} \tens h\o{}^{\baro} (h\t \la g^{\baro})- h\o{}^{\barnot} g^{\barnot} \tens h\o{}^{\baro}(h\t\la \extd_A g^{\baro})\\
=& (\extd_H h\o{}^{\barnot})g^{\barnot} \tens h\o{}^{\baro} (h\t \la g^{^{\baro}}) + h\o{}^{\barnot}g^{\barnot}\tens (\extd_A h\o{}^{\baro})(h\t \la g^{\baro})\\
&+h\o{}^{\barnot}g^{\barnot} \tens h\o{}^{\baro}((\extd_H h\t) \la g^{^{\baro}})\\
=&((\extd_H h)\o{}^{\barnot})g^{\barnot} \tens (\extd_H h)\o{}^{\baro}((\extd_H h)\t \la g^{\baro})
\end{align*}
for $h,g\in H$. Next, we prove that $\beta_*$ extends by (\ref{bicross3}) to $\Omega^2(H)$ for the maximal prolongation. Suppose $h\extd_H g = 0$  in $\Omega^1(H)$ (a sum of such terms understood) then $\beta_*(h\extd_H g ) = 0$ tells us that
\[h\o{}^{\barnot} \extd_H g^{\barnot} \tens h\o{}^{\baro} (h\t \la g^{\baro})=0, \quad  h\o{}^{\barnot} g^{\barnot} \tens h\o{}^{\baro}(h\t\la \extd_A g^{\baro})=0.
\]
Applying $\extd_H \tens \id$ to the first equation, then the $\Omega^2(H)\tens A$-part of $\beta_*(\extd_H h \extd_H g)$ is
\begin{align*}
& \extd_H h\o{}^{\barnot} \extd_H g^{\barnot} \tens h\o{}^{\baro} (h\t \la g^{\baro})=0.
\end{align*}
Applying $\id \tens \extd_A$ to the second equation, the $H\tens \Omega^2(A)$-part of $\beta_*(\extd_H h \extd_H g)$ is
\begin{align*}
h\o{}^{\barnot} g^{\barnot} \tens\big( (\extd_A h\o{}^{\baro})(h\t \la \extd_A g^{\baro}) + h\o{}^{\baro}((\extd_H h\t) \la \extd_A g^{\baro}) \big)=0.
\end{align*}
Finally, applying $\extd_H \tens \id$ to the second equation and $\id \tens \extd_A$ to the first equation and subtracting them, the $\Omega^1(H)\underline{\tens} \Omega^1(A)$-part of $\beta_*(\extd_H h \extd_H g)$ is
\begin{align*}
(\extd_H h\o{}^{\barnot})g^{\barnot} \tens& h\o{}^{\baro} (h\t \la \extd_A g^{\baro})\\&-h\o{}^{\barnot}\extd_H g^{\barnot} \tens ((\extd_A h\o{}^{\baro})(h\t \la g^{\baro})+h\o{}^{\baro}(\extd_H h\t \la g^{\baro}))=0.
\end{align*}
Since $\la : \Omega(H)\underline{\tens}\Omega(A)\to \Omega(A)$ is defined and $\Omega(H)$ is the maximal prolongation of $\Omega^1(H)$, $\beta_*$ can be extended further to higher degree obeying (\ref{bicross3}).

(ii) Next we check that $\Delta_*((\extd_H h) \la a)$ satisfies (\ref{bicross2}), 
\begin{align*}
\Delta_*((\extd_H h)\la a) &= \Delta_*(\extd_A (h\la a) - h\la \extd_A a)\\
=& \extd_A(h\la a)\o\tens (h\la a)\t +(h\la a)\o\tens \extd_A(h\la a)\t -\Delta_*(h\la \extd_A a)\\
=& \extd_A(h\o{}^{\barnot}\la a\o) \tens h\o{}^{\baro}(h\t \la a\t) +(h\o{}^{\barnot}\la a\o) \tens \extd_A( h\o{}^{\baro}(h\t \la a\t))\\
&-h\o{}^{\barnot} \la \extd_A a\o \tens h\o{}^{\baro}(h\t \la a\t)- h\o{}^{\barnot} \la a\o \tens h\o{}^{\baro}(h\t \la \extd_A a\t)\\
=& \extd_H h\o{}^{\barnot}\la a\o \tens h\o{}^{\baro} (h\t \la a\t) + (h\o{}^{\barnot} \la a\o)\tens (\extd_A h\o{}^{\baro})(h\t \la a\t)\\
&+h\o{}^{\barnot} \la a\o)\tens h\o{}^{\baro} ((\extd_H h\t) \la a\t) \\
=& (\extd_H h)\o{}^{\barnot}\la a\o \tens (\extd_H h)\o{}^{\baro}((\extd_H h)\t \la a\t)
\end{align*}
and one can find further that 
\begin{align*}
\Delta_*((h\extd_H g) \la a) =& (h\o{}^{\barnot} \extd_H g\o{}^{\barnot}) \la a\o \tens h\o{}^{\baro}(h\t\la g\o{}^{\baro})((h\t g\t)\la a\t)\\
&+(h\o{}^{\barnot} g\o{}^{\barnot}) \la a\o \tens h\o{}^{\baro}(h\t\la \extd_A g\o{}^{\baro})((h\t g\t)\la a\t)\\
&+(h\o{}^{\barnot} g\o{}^{\barnot}) \la a\o \tens h\o{}^{\baro}(h\t\la g\o{}^{\baro})((h\t \extd_H g\t)\la a\t)
\end{align*}
for all $a \in A$ as also required for (\ref{bicross2}). We can extend further to $\Delta_*((\extd_H h) \la \extd_A a)$ and prove that  (\ref{bicross2}) holds as
\begin{align*}
\Delta_*((\extd_H h) &\la \extd_A a) = \Delta_*(\extd_A(h \la \extd_A a)) \\
=& \extd_A (h\la \extd_A a)\o \tens (h\la \extd_A a)\t + (-1)^{|(h\la \extd_A a)\o|} (h\la \extd_A a)\o \tens \extd_A(h\la \extd_A a)\\
=&\extd_A(h\o{}^{\barnot} \la \extd_A a\o) \tens h\o{}^{\baro}(h\t \la a\t) - h\o{}^{\barnot} \la \extd_A a\o \tens \extd_A (h\o{}^{\baro}(h\t \la a\t))\\
&+\extd_A(h\o{}^{\barnot} \la a\o)\tens h\o{}^{\baro}(h\t \la \extd_A a\t)+ h\o{}^{\barnot} \la a\o\tens \extd_A( h\o{}^{\baro}(h\t \la \extd_A a\t))\\
=&(\extd_H h\o{}^{\barnot})\la \extd_A a\o \tens h\o{}^{\baro} (h\t \la a\t) - h\o{}^{\barnot}\la \extd_A a\o \tens (\extd_A h\o{}^{\baro})(h\t \la a\t)\\
&-h\o{}^{\barnot}\la \extd_A a\o \tens h\o{}^{\baro}((\extd_H h\t)\la a\t) + (\extd_H h\o{}^{\barnot})\la a\o \tens h\o{}^{\baro}(h\t \la \extd_A a\t)\\
&+h\o{}^{\barnot}\la a\o \tens (\extd_A h\o{}^{\baro})(h\t \la \extd_A a\t) + h\o{}^{\barnot} \la a\o \tens h\o{}^{\baro} ((\extd_H h\t)\la \extd_A a\t)\\
=&(-1)^{(|(\extd_H h)\o{}^{\baro}|+|(\extd_H h)\t|)|(\extd_A a)\o|} (\extd_H h)\o{}^{\barnot} \la (\extd_A a)\o \tens (\extd_H h)\o{}^{\baro}((\extd_H h)\t \la (\extd_A a)\t).
\end{align*}
This then extends to all degrees  since $\Delta_*$ of $\Omega(A)$ and  $\la : \Omega(H)\underline{\tens} \Omega(A) \to \Omega(A)$ are defined by assumption and $\beta_* : \Omega(H)\to \Omega(H)\underline{\tens} \Omega(A)$ is now defined since $\Omega(H)$ is the maximal prolongation of $\Omega^1(H)$. 

(iii) Finally, we check that $\beta_*$ obeys (\ref{bicross4}). In fact by applying $\extd_H \tens \id + \id \tens \extd_A$ to 
\[h\t{}^{\barnot}\tens (h\o \la a)h\t{}^{\baro} = h\o{}^{\barnot}\tens h\o{}^{\baro}(h\t \la a)
\]
combined with assumption (3), we have
\begin{align*}
\extd_H& h\t{}^{\barnot}\tens (h\o \la a)h\t{}^{\baro} + h\t{}^{\barnot} \tens ((\extd_H h\o)\la a) h\t{}^{\baro} + h\t{}^{\barnot}\tens (h\o\la a)\extd_A h\t{}^{\baro}\\
&=\extd_H h\o{}^{\barnot}\tens h\o{}^{\baro}(h\t \la a) +  h\o{}^{\barnot}\tens \extd_A h\o{}^{\baro}(h\t \la a) + h\o{}^{\barnot}\tens h\o{}^{\baro}(\extd_H h\t \la  a) 
\end{align*} 
for all $h\in H$, $a\in A$, which is equivalent to
\[(\extd_H h)\t{}^{\barnot}\tens ((\extd_H h) \o \la a)(\extd_H h)\t{}^{\baro} = (\extd_H h)\o{}^{\barnot}\tens (\extd_H h)\o{}^{\baro}((\extd_H h)\t \la a).\]
Furthermore, one can find that
\[(h\extd_H g)\t{}^{\barnot}\tens ((h\extd_H g) \o \la a)(h\extd_H g)\t{}^{\baro} = (h \extd_H g)\o{}^{\barnot}\tens (h\extd_H g)\o{}^{\baro}((h\extd_H g)\t \la a).\]

We  extend this by applying $\extd_H \tens \id + \id \tens \extd_A$ to the assumption (3), where we have
\begin{align*}
\extd_H& h\t{}^{\barnot}\tens (h\o \la \extd_A a)h\t{}^{\baro} + h\t{}^{\barnot} \tens \big( ((\extd_H h\o)\la \extd_A a)h\t{}^{\baro} - (h\o \la \extd_A a)\extd_A h\t{}^{\baro} \big)\\
&= \extd_H h\o{}^{\barnot}\tens h\o{}^{\baro}(h\t \la \extd_A a) + h\o{}^{\baro}\tens \big((\extd_A h\o)(h\t \la \extd_A a) + h\o{}^{\baro}((\extd_H h\t)\la \extd_A a)\big)
\end{align*}
which is equivalent to
\begin{align*}
(-1)^{|(\extd_H h)\t{}^{\baro}||\extd_A a|}(\extd_H h)\t{}^{\barnot}\tens& ((\extd_H h)\o \la \extd_A a)(\extd_H h)\t{}^{\baro}\\
&= (\extd_H h)\o{}^{\barnot}\tens (\extd_H h)\o{}^{\baro}((\extd_H h)\t \la \extd_Aa).
\end{align*}

Since $\la$ and $\beta_*$ are defined for $\Omega(H)$, one can extend the above equation further to higher degree by applying $\extd_H \tens \id + (-1)^{|\ |}\id \tens \extd_A$ to the lower degree equation. In this way, $\beta_*$ obeys (\ref{bicross4}), which completes the proof. \end{proof}

This lemma assists with the data for Theorem~\ref{calc bicross}. Moreover, as part of the theory of bicrossproduct Hopf algebras, there is a covariant right coaction $\Delta_R : H \to H \tens A\bicross H$ given by $\Delta_R h = h\o{}^{\barnot} \tens h\o{}^{\baro}\tens h\t$ for all $h\in H$.

\begin{corollary}
If $\la$ and $\beta_*$ obey the conditions in Theorem~\ref{calc bicross} then $\Delta_R : H \to H \tens A\bicross H$ as above is differentiable.
\end{corollary}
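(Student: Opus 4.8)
The plan is to mirror the proof of Corollary~\ref{cobosdif}, exploiting the fact that the coaction $\Delta_R$ on $H$ is nothing but a component of the coproduct $\Delta_*$ of the super-Hopf algebra $\Omega(A\bicross H)=\Omega(A)\bicross\Omega(H)$ already constructed in Theorem~\ref{calc bicross}. Concretely, I would first observe that under the inclusion $H\cong 1\tens H\hookrightarrow A\bicross H$, evaluating the coproduct formula at $\omega=1_A$ (of degree $0$, so the sign collapses) gives
\[\Delta_*(1\tens\eta)= 1\tens \eta\o{}^{\barnot *}\tens\eta\o{}^{\baro *}\tens\eta\t,\]
and this recovers the stated coaction on degree $0$: the first tensor slot lies in $1\tens\Omega(H)\cong\Omega(H)$, while the remaining two slots recombine into $\Omega(A\bicross H)$. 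This motivates defining the extension $\Delta_{R*}:\Omega(H)\to\Omega(H)\underline\tens\Omega(A\bicross H)$ by exactly this restriction, namely $\Delta_{R*}\eta=\eta\o{}^{\barnot *}\tens(\eta\o{}^{\baro *}\tens\eta\t)$.

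Next I would argue that the required properties are inherited from $\Delta_*$ for free. Since Theorem~\ref{calc bicross} establishes that $\Omega(A\bicross H)$ is a strongly bicovariant super-Hopf algebra, $\Delta_*$ is a degree-preserving super-algebra map and is coassociative; restricting along $1\tens\Omega(H)$ therefore yields a degree-preserving super-comodule-algebra map, and coassociativity of $\Delta_*$ descends to the coaction property for $\Delta_{R*}$. The only genuinely new input is the differentiability condition (\ref{diff coact}), and this follows because $\extd$ is a super-coderivation for $\Delta_*$ (also part of Theorem~\ref{calc bicross}): applying $\Delta_*\extd=(\extd\tens\id+(-1)^{|\ |}\id\tens\extd)\Delta_*$ to $1\tens\eta$ and regrouping reproduces exactly (\ref{diff coact}) with $A$ replaced by $A\bicross H$.

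As a first-order confirmation I would compute $\Delta_{R*}(\extd_H h)$ directly. Using the super-coderivation $\Delta_*\extd_H h=\extd_H h\o\tens h\t+h\o\tens\extd_H h\t$ in $\Omega(H)$ together with (\ref{coaction beta}) for $\beta_*$ (noting $h\o{}^{\barnot}$ has degree $0$), I expect to land on
\[\Delta_{R*}(\extd_H h)=\extd_H h\o{}^{\barnot}\tens(h\o{}^{\baro}\tens h\t)+h\o{}^{\barnot}\tens(\extd_A h\o{}^{\baro}\tens h\t)+h\o{}^{\barnot}\tens(h\o{}^{\baro}\tens \extd_H h\t),\]
the exact bicrossproduct analogue of the degree-$1$ formula in Corollary~\ref{cobosdif}, and one checks this equals $\extd_H h^{\barnot *}\tens h^{\baro *}+(-1)^{|h^{\barnot *}|}h^{\barnot *}\tens\extd_{A\bicross H}h^{\baro *}$ by expanding $\extd_{A\bicross H}(h\o{}^{\baro}\tens h\t)$ via the tensor-product differential of $\Omega(A\bicross H)$.

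Because neither ingredient presents real difficulty once $\Omega(A\bicross H)$ is known to be a strongly bicovariant super-Hopf algebra, I do not anticipate a serious obstacle; the only point demanding care is the consistent bookkeeping of the $\Z_2$-grading and the re-bracketing of the four-fold tensor product as $\Omega(H)\underline\tens\Omega(A\bicross H)$, together with confirming that no extraneous sign is generated precisely because the first $\Omega(A)$-slot is always the degree-$0$ unit $1_A$.
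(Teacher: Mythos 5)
Your proposal is correct and follows essentially the same route as the paper: the paper likewise defines $\Delta_{R*}\eta=\eta\o{}^{\barnot *}\tens \eta\o{}^{\baro *}\tens \eta\t$ (i.e.\ the restriction of the super coproduct of $\Omega(A)\bicross\Omega(H)$ along $1\tens\Omega(H)$), notes it is well defined because $\beta_*$ and the action exist globally by the hypotheses of Theorem~\ref{calc bicross}, and records the same degree-one formula $\Delta_{R*}(\extd_H h) = \extd_H h\o{}^{\barnot} \tens h\o{}^{\baro} \tens h\t + h\o{}^{\barnot} \tens \extd_A h\o{}^{\baro} \tens h\t + h\o{}^{\barnot} \tens h\o{}^{\baro} \tens \extd_H h\t$ that you derive. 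Your write-up simply makes explicit the inheritance of the comodule-algebra, coassociativity and coderivation properties that the paper leaves implicit.
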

\begin{proof}
Since the coaction $\Omega(H)\to \Omega(H)\underline{\tens} \Omega(A)$ and action $\Omega(H)\underline{\tens}\Omega(A)\to \Omega(A)$ exist globally as assumed in Theorem~\ref{calc bicross}, it is clear that $\Delta_{R*} \eta=\eta\o{}^{\barnot *}\tens \eta\o{}^{\baro *}\tens \eta\t$ is well-defined on $\eta\in\Omega(H)$ and gives a coaction of $\Omega(A\bicross H)$ on $\Omega(H)$. For example, on degree 1 we have
\[\Delta_{R*}(\extd_H h) = \extd_H h\o{}^{\barnot} \tens h\o{}^{\baro}  \tens h\t + h\o{}^{\barnot} \tens \extd_A h\o{}^{\baro}  \tens h\t + h\o{}^{\barnot} \tens h\o{}^{\baro}  \tens \extd_H h\t .\]
\end{proof}
\subsection{Examples of bicrossproduct exterior algebras}\label{secexbicross}

We now turn to some examples of exterior algebras constructed as super bicrossproducts. We work over $\C$ due to the physical context (typically supplemented by an appropriate $*$-algebra real form). Our warm-up example
is the `Planck scale' Hopf algebra $\C[g,g^{-1}]\bicross \C[p]$ generated by $g,g^{-1},p$ with
\[[p,g]=\lambda (1-g)g, \quad \Delta g = g\tens g, \quad \Delta p = 1\tens p + p \tens g\]
where $\lambda = \imath\frac{\hbar}{\gamma}$ is an imaginary constant built from Planck's constant and a curvature length scale $\gamma$, and $g=e^{-\frac{x}{\gamma}}$ where $x$ is the spatial position in \cite{Ma:book} (whereas we work algebraically with $g$ as a generator).  This turned out to be a Drinfeld twist of $U(b_+)$ and we will obtain the same calculus as found in \cite{MaOec} by twisting methods.  Let $\C[r]$ be another copy of $\C[p]$ with standard exterior algebra $\Omega(\C[r])$ given by
\[[r,\extd r]=\lambda\extd r, \quad (\extd r)^2=0.\]
As part of the theory of bicrossproducts, there is a natural coaction $\Delta_R : \C[r]\to\C[r]\tens  \C[g,g^{-1}]\bicross \C[p]$ given by $\Delta_R r = 1\tens p + r \tens g$ making $\C[r]$ a comodule-algebra. This coaction extends to $\Omega(\C[r])$ by $\Delta_R\extd r=\extd r\tens g$. 

\begin{proposition}\label{explanck} There is a uniquely determined strongly bicovariant exterior algebra $\Omega(\C[g,g^{-1}]\bicross \C[p])$ such that $\Delta_R$ is differentiable. This has relations 
\[ [\extd g,g]=0, \quad [p,\extd p]=\lambda\extd p, \quad [\extd p, g]=\lambda g\extd g, \quad [\extd g,p]=\lambda (1-g)\extd g, \]
\[(\extd p)^2= (\extd g)^2=0, \quad \extd p \extd g = -\extd g \extd p.\]
Moreover, $\Omega(\C[g,g^{-1}]\bicross \C[p]) = \Omega(\C[g,g^{-1}])\bicross \Omega(\C[p])$  and has coproduct
\[\Delta g = g\tens g, \quad \Delta_* \extd g = \extd g \tens g + g\tens \extd g,\]
\[\Delta p = 1\tens p + p\tens g, \quad \Delta_* \extd p = 1\tens \extd p + \extd p \tens g + p \tens \extd g.\]
\end{proposition}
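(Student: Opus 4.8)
The plan is to present $\Omega(\C[g,g^{-1}]\bicross\C[p])$ as an instance of the super bicrossproduct of Theorem~\ref{calc bicross} and then to deduce uniqueness from differentiability of the canonical coaction on the copy $\C[r]$. For existence I would equip $A=\C[g,g^{-1}]$ with its classical one-dimensional calculus ($[\extd g,g]=0$, $(\extd g)^2=0$, $\Delta_*\extd g=\extd g\tens g+g\tens\extd g$) and $H=\C[p]$ with the calculus $[p,\extd p]=\lambda\extd p$, $(\extd p)^2=0$, $\extd p$ primitive, both strongly bicovariant. The bicrossproduct datum is the left action $p\la g=\lambda(1-g)g$ together with the right coaction $\beta(p)=p\tens g$, which recover $[p,g]=\lambda(1-g)g$ and $\Delta p=1\tens p+p\tens g$. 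I would extend these to degree one by setting $\beta_*(\extd p)=\extd p\tens g+p\tens\extd g$, as dictated by (\ref{coaction beta}), and by prescribing $\la$ on the generating one-forms, then verify the hypotheses (\ref{action alpha})--(\ref{bicross4}) of Theorem~\ref{calc bicross}. Because both $\Omega(A)$ and $\Omega(H)$ are maximal prolongations, the lemma following Theorem~\ref{calc bicross} reduces this to checking that $\la$ and $\beta_*$ are well defined in degree one and satisfy its conditions (1)--(3); the higher-degree compatibilities are then automatic.

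Theorem~\ref{calc bicross} then produces $\Omega(A)\bicross\Omega(H)$ as a strongly bicovariant exterior algebra on $A\bicross H$, and the displayed bimodule relations and coproducts are obtained by evaluating the super bicrossproduct product $(\omega\tens\eta)(\tau\tens\xi)=(-1)^{|\eta\t||\tau|}\omega(\eta\o\la\tau)\tens\eta\t\xi$ and the corresponding coproduct on the generators $g,p,\extd g,\extd p$. Differentiability of $\Delta_R\colon\C[r]\to\C[r]\tens A\bicross H$ is then immediate from the corollary after Theorem~\ref{calc bicross}, with $\C[r]$ playing the role of the factor $H$ on which the bicrossproduct canonically coacts; concretely the extension sends $\extd r\mapsto\extd r\tens g+1\tens\extd p+r\tens\extd g$.

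For uniqueness I would reverse the argument. Let $\Omega$ be any strongly bicovariant exterior algebra on $A\bicross H$ for which $\Delta_R$ is differentiable. Strong bicovariance forces $\Delta_*$ to be a supercoderivation, which already fixes $\Delta_*\extd g$ and $\Delta_*\extd p$ as in the statement, hence the left and right covariance of $\Omega^1$. The decisive step is to apply the superalgebra map $\Delta_{R*}$ to the defining relation $r\extd r-(\extd r)r=\lambda\extd r$ of $\Omega(\C[r])$ inside $\Omega(\C[r])\underline{\tens}\Omega$, using $\Delta_{R*}r=1\tens p+r\tens g$ and $\Delta_{R*}\extd r=\extd r\tens g+1\tens\extd p+r\tens\extd g$. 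After reducing with $r\extd r-(\extd r)r=\lambda\extd r$ in the first factor and collecting the independent elements $1,\extd r,r,r^2$ of $\Omega(\C[r])$, the four resulting coefficients yield $[p,\extd p]=\lambda\extd p$, the degree-zero relation $[p,g]=\lambda(1-g)g$, a single linear constraint relating the two cross-commutators $[p,\extd g]$ and $[\extd p,g]$, and $[\extd g,g]=0$. The residual freedom in $[p,\extd g]$ and $[\extd p,g]$ is removed by the graded Leibniz rule applied to $[p,g]=\lambda(1-g)g$, namely $[\extd p,g]+[p,\extd g]=\extd[p,g]$, which supplies the complementary linear combination; solving the two equations determines both commutators uniquely and gives the displayed degree-one relations. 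The degree-two relations $(\extd p)^2=(\extd g)^2=0$ and $\extd p\extd g=-\extd g\extd p$ then follow by applying $\extd$ to the degree-one relations, $\Omega$ being the maximal prolongation.

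The main obstacle lies in this uniqueness direction: differentiability of the single coaction $\Delta_R$ constrains only the antisymmetric combination of the two unknown commutators $[p,\extd g]$ and $[\extd p,g]$, so the symmetric combination must be supplied independently by the graded Leibniz rule on the degree-zero relation. One has to check that these two inputs are genuinely independent and together leave no free parameter, and that the degree-one relations they produce are Leibniz-closed, so that passing to the maximal prolongation introduces exactly the stated degree-two relations and nothing more. A secondary but error-prone point is the consistent tracking of Koszul signs in the super tensor product $\Omega(\C[r])\underline{\tens}\Omega$ when expanding $\Delta_{R*}(r\extd r)$ and $\Delta_{R*}((\extd r)r)$, since a single misplaced sign would alter the constraint and hence the relations obtained.
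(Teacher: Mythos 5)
Your proposal is correct and lands on the same calculus and the same super bicrossproduct description as the paper, but the uniqueness step runs through a genuinely different second constraint. The paper obtains the complementary equation by imposing $\Delta_{R*}\big((\extd r)^2\big)=0$: the coefficient of $\extd r$ in the first tensor factor yields $[g,\extd p]=\lambda g\extd g$, while the coefficients of $1,r,r^2$ simultaneously deliver the degree-two relations $(\extd p)^2=0$, $\extd p\,\extd g=-\extd g\,\extd p$, $(\extd g)^2=0$. You never use $(\extd r)^2=0$; instead you apply the graded Leibniz rule to $[p,g]=\lambda(1-g)g$, getting $[\extd p,g]+[p,\extd g]=\extd[p,g]=\lambda(1-2g)\extd g$ (after $[\extd g,g]=0$), and combine it with the constraint $[p,\extd g]-[\extd p,g]=\lambda\extd g$ coming from $\Delta_{R*}$ on the degree-one relation of $\Omega(\C[r])$; solving gives $[p,\extd g]=\lambda(1-g)\extd g$ and $[g,\extd p]=\lambda g\extd g$. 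These two constraints are indeed independent (they are the sum and difference in the two unknown commutators), so your argument closes. Note that this output agrees with the paper's own computation in its proof (which derives $[g,\extd p]=\lambda g\extd g$), whereas the commutators displayed in the statement of the proposition are written in the reversed order, an internal sign slip of the paper: the statement's version would violate the Leibniz rule on $[p,g]=\lambda(1-g)g$, so your relations are the correct ones. The trade-off between the two routes: the paper shows that differentiability of $\Delta_R$ alone (applied to all relations of $\Omega(\C[r])$, in both degrees) forces the calculus, which is the sharpest form of the universality claim, and it gets the degree-two relations for free; your route uses less of the coaction data (only its degree-one compatibility) but leans on the intrinsic DGA axioms of the unknown calculus, then recovers degree two by applying $\extd$ to degree one (valid in characteristic $\neq 2$). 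Your existence half is the same as the paper's, just presented first and more modularly (Theorem~\ref{calc bicross}, its following lemma to reduce verification to degree one, and the corollary for differentiability of $\Delta_R$); to complete it one must still write out the action of forms explicitly, namely $\extd p\la g=-\lambda g\extd g$, $p\la\extd g=\lambda(1-g)\extd g$, $\extd p\la\extd g=0$, which are pinned down by (\ref{action alpha}) from the relations you derived.
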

\begin{proof} If it exists then $\Delta_{R*}\extd r=\extd r\tens g+ 1\tens\extd p+r\tens\extd g$. For this to extend in a  well-defined way to $\Omega^1(\C[p])$ as in (\ref{diff coact1}), we 
require $\Delta_{R*}(\extd r.r-r\extd r + \lambda \extd r)=0$ which gives 
\[ [p,\extd p]=\lambda \extd p,\quad [\extd g, g]=0,\quad [\extd g,p]+[\extd p,g]+\lambda \extd g =0.\]
By requiring $\Delta_{R*}(\extd r)^2=0$ we find that $[g,\extd p]=\lambda g\extd g$ which implies $[\extd g,p]$ as stated. We also find the relations on degree 2 as stated. One can check that they obey the Leibniz rule, making $\Omega(\C[g,g^{-1}]\bicross \C[p])$ an exterior algebra. 

Moreover, one can find that $\Omega(\C[p])$ is a super right $\Omega(\C[g,g^{-1}])$-comodule and $\Omega(\C[g,g^{-1}])$ is a super right $\Omega(H)$-module by the  actions and coactions
\[p \la g = \lambda(1-g)g,\quad \extd p \la g = -\lambda g \extd g, \quad p \la \extd g = \lambda (1-g)\extd g, \quad \extd p \la \extd g = 0,\]
\[\beta (p) = p\tens g, \quad \beta_* (\extd p) = \extd p \tens g + p \tens \extd g.\]
It is a routine calculation to show that these obey the conditions for Theorem \ref{calc bicross}. It is then easy to check that the super bicrossproduct $\Omega(\C[g,g^{-1}])\bicross \Omega(\C[p])$ gives the same exterior algebra as $\Omega(\C[g,g^{-1}]\bicross\C[p])$ and with the stated coproduct. \end{proof}

Being uniquely determined here                                                                                                                                                               and below means in the universal sense that we use only the differentiability to derive the calculus relations (one could have quotients). The next most complicated example in this context is the quantum Poincar\'e group $\C[\mathrm{Poinc}_{1,1}]=\C[SO_{1,1}]\bicross U(b_+)$ for which we follow the construction in \cite[Chapter~9.1]{QRG}. Thus  $\C[SO_{1,1}]$ is the  `hyperbolic Hopf algebra' generated by $c=\cosh \alpha$ and $s=\sinh \alpha$ with relations $c^2-s^2=1$ and $cs=sc$, where we work algebraically with $c,s$ and do not need $\alpha$ itself. This is a Hopf algebra with 
\[\Delta \begin{pmatrix}
c ~~& s\\ s~~& c
\end{pmatrix} = \begin{pmatrix}
c ~~& s\\ s~~& c
\end{pmatrix} \tens \begin{pmatrix}
c ~~& s\\ s~~& c
\end{pmatrix}, \quad \epsilon \begin{pmatrix}
c ~~& s\\ s~~& c
\end{pmatrix} = \begin{pmatrix}
1 ~~& 0 \\ 0~~ & 1
\end{pmatrix}, \quad S \begin{pmatrix}
c ~~& s\\ s~~& c
\end{pmatrix} = \begin{pmatrix}
c ~~& -s \\ -s ~~& c
\end{pmatrix}.\]
Meanwhile, we take $ U(b_+)$ as generated by $a_0$ and $a_1$ with relation $[a_0, a_1]= \lambda a_0$ for some $\lambda \in \C$, and primitive comultiplication $\Delta a_i = 1\tens a_i + a_i \tens 1$ for $i=0,1$.  $\C[SO_{1,1}]$ coacts on $U(b_+)$ and $U(b_+)$ acts on $\C[SO_{1,1}]$ by 
\[a_0 \la \begin{pmatrix}
c \\ s
\end{pmatrix} = \lambda \begin{pmatrix}
s^2 \\ sc
\end{pmatrix}, \quad a_1 \la \begin{pmatrix}
c \\ s
\end{pmatrix} = \lambda \begin{pmatrix}
cs-s \\ c^2-c
\end{pmatrix}, \quad \Delta_R \begin{pmatrix}
a_0 ~~& a_1
\end{pmatrix} = \Delta_R \begin{pmatrix}
a_0 ~~& a_1
\end{pmatrix} \tens \begin{pmatrix}
c ~~& s \\ s~~& c.
\end{pmatrix} \]
Thus their bicrossproduct $\C_\lambda[\mathrm{Poinc_{1,1}}] = \C[SO_{1,1}]\bicross U(b_+)$ contains $\C[SO_{1,1}]$ as sub-Hopf algebra, $U(b_+)$ as subalgebra, and one has the additional cross-relations and coproducts\cite[Chapter~9.1]{QRG}
\[[a_0, \begin{pmatrix}
c \\ s
\end{pmatrix}] = \lambda s\begin{pmatrix}
s \\ c
\end{pmatrix}, \quad [a_1, \begin{pmatrix}
c \\ s
\end{pmatrix}] = \lambda (c-1)\begin{pmatrix}
s \\ c
\end{pmatrix}, \quad \Delta a_i = 1\tens a_i + \Delta_R a_i.\]

Finally, we let $U(b_+)$ be another copy with generators $t,x$ in place of $a_0,a_1$ with exterior algebra\cite{Sit}
\[[\extd x, x]=\lambda \theta', \quad [\extd x, t]=0, \quad [\extd t, x] = \lambda \extd x, \quad [\extd t, t] = \lambda (\extd t-\theta'), \quad [\theta', x]=0, \quad [\theta', t]=-\lambda \theta'\]
\[\extd \theta'=0, \quad (\extd x)^2=(\extd t)^2=0, \quad \extd x \extd t = -\extd t \extd x, \quad \extd x. \theta' = -\theta' \extd x, \quad \extd t .\theta' = -\theta'\extd t.\]
It is known that this is covariant under $\C_\lambda[{\rm Poinc}_{1,1}]$ by coaction 
\[ \Delta_R : U(b_+) \to U(b_+) \tens \C[\mathrm{Poinc}_{1,1}],\quad \Delta_R (t\ \, x) = 1\tens (a_0 \ \,  a_1)+ (t\ \,  x)\tens \begin{pmatrix}c ~~& s\\ s ~~& c\end{pmatrix}\]
in the same matrix notation as for coproducts above. We are thinking of $U(b_+)$ here as a quantisation of 2D Minkowski spacetime with its known 3D differential calculus. 

\begin{proposition}\label{expoinc} There is a uniquely determined strongly bicovariant exterior algebra $\Omega(\C[\mathrm{Poinc}_{1,1}])$ such that the above coaction on $U(b_+)$ is differentiable. It contains $\Omega(U(b_+))$ as sub-exterior algebras and has the additional relations on degree 1 and 2,
\[(\extd c)s = s\extd c, \quad (\extd s)c= c\extd s, \quad (\extd c)c=c\extd c= s\extd s=(\extd s)s, \]
\[[\extd a_0, \begin{pmatrix}
c \\ s
\end{pmatrix}] = \lambda c \begin{pmatrix}
\extd c \\ \extd s
\end{pmatrix}, \quad [\extd a_1, \begin{pmatrix}
c \\ s
\end{pmatrix}]= \lambda s \begin{pmatrix}
\extd c \\ \extd s
\end{pmatrix},\]
\[[a_0, \begin{pmatrix}
\extd c \\ \extd s
\end{pmatrix}] = \lambda s \begin{pmatrix}
\extd s \\ \extd c
\end{pmatrix}, \quad [a_1, \begin{pmatrix}
\extd c \\ \extd s
\end{pmatrix}] = \lambda (c-1)\begin{pmatrix}
\extd s \\ \extd c
\end{pmatrix},\quad [\theta', \begin{pmatrix}
c \\ s
\end{pmatrix}] = \lambda (c-1)\begin{pmatrix}
\extd c \\ \extd s
\end{pmatrix},\]
\[\{\extd a_0, \begin{pmatrix}
\extd c\\ \extd s
\end{pmatrix} \} = \begin{pmatrix}
0 \\ \lambda \extd s \extd c
\end{pmatrix},  \quad \{\extd a_1, \begin{pmatrix}
\extd c\\ \extd s
\end{pmatrix} \} = \begin{pmatrix}
\lambda \extd c \extd s \\ 0
\end{pmatrix}.\]
Moreover, $\Omega(\C[\mathrm{Poinc}_{1,1}])=\Omega(\C[SO_{1,1}])\bicross \Omega(U(b_+))$  with  coproduct that of $\C_\lambda[{\rm Poinc}_{1,1}]$ on degree 0 and
\[\Delta_* \extd c = \extd c\tens c + \extd s \tens s + c\tens \extd c + s\tens \extd s, \quad \Delta \extd s = \extd c \tens s + \extd s \tens c + c\tens \extd s+ s\tens \extd c,\]
\[\Delta_* \extd a_0 = 1 \tens \extd a_0 + \extd a_0 \tens c + \extd a_1 \tens s + a_0 \tens \extd c + a_1 \tens \extd s,\]
\[\Delta_* \extd a_1 = 1 \tens \extd a_1 + \extd a_0 \tens s + \extd a_1 \tens c + a_0 \tens \extd s + a_1 \tens \extd c,\]
\[\Delta_* \theta' = 1 \tens \theta' + \theta'\tens 1 - 1\tens \extd a_0 - \extd a_0 \tens 1 + \Delta_* \extd a_0.\]
\end{proposition}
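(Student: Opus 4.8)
The plan is to follow the two-stage strategy of Proposition~\ref{explanck}: first use differentiability of the given coaction on the spacetime copy of $U(b_+)$ to pin down the calculus relations uniquely, then recognise the outcome as a super bicrossproduct via Theorem~\ref{calc bicross}. For the first stage I would fix $\Delta_{R*}$ on $\Omega^1(U(b_+))$ by the differentiability formula (\ref{diff coact}), so that $\Delta_{R*}\extd t$ and $\Delta_{R*}\extd x$ are the formal $\extd$ of $\Delta_R(t\ x)$; this introduces target one-forms $\extd c,\extd s,\extd a_0,\extd a_1$ whose commutation relations are as yet undetermined. A key simplification is that the invariant form $\theta'$ is not independent data: the relation $[\extd t,t]=\lambda(\extd t-\theta')$ expresses $\theta'$ through $t$ and $\extd t$, so $\Delta_{R*}\theta'$ is forced once $\Delta_{R*}\extd t$ is chosen.

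Applying $\Delta_{R*}$ to each defining relation of the Sitarz calculus $\Omega^1(U(b_+))$ and using (\ref{diff coact1}), the $\Omega^1(U(b_+))\tens\C[\mathrm{Poinc}_{1,1}]$ component is automatically consistent by covariance, so the $U(b_+)\tens\Omega^1(\C[\mathrm{Poinc}_{1,1}])$ component must close on itself; this forces precisely the stated degree~$1$ relations, namely $(\extd c)s=s\extd c$ and its companions, the brackets of $\extd a_0,\extd a_1$ with $c,s$, of $a_0,a_1$ with $\extd c,\extd s$, and of $\theta'$ with $c,s$. Imposing that $\Delta_{R*}$ respect the degree~$2$ relations of the Sitarz calculus, equivalently applying $\extd$ once more in the spirit of Lemma~\ref{maxDeltaR}, then forces the stated anticommutators of $\extd a_0,\extd a_1$ with $\extd c,\extd s$. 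This settles uniqueness.

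For existence I would realise the calculus as $\Omega(\C[SO_{1,1}])\bicross\Omega(U(b_+))$, where $\Omega(\C[SO_{1,1}])$ is the classical calculus on the hyperbola (so that differentiating $c^2-s^2=1$ gives $c\extd c=s\extd s$) and $\Omega(U(b_+))$ is the Sitarz calculus on the factor with generators $a_0,a_1,\theta'$. I would extend the degree~$0$ action $\la$ and coaction $\beta$ to $\la:\Omega(U(b_+))\underline\tens\Omega(\C[SO_{1,1}])\to\Omega(\C[SO_{1,1}])$ and $\beta_*:\Omega(U(b_+))\to\Omega(U(b_+))\underline\tens\Omega(\C[SO_{1,1}])$ so that (\ref{action alpha}) and (\ref{coaction beta}) hold, reading off the extended values from the relations found in the first stage. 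By the lemma following Theorem~\ref{calc bicross} it then suffices to check that $\la$ and $\beta_*$ are well defined at degree~$1$ together with its conditions~(2) and~(3), since both factor calculi are maximal prolongations; this reduces the bicrossproduct compatibilities (\ref{bicross1})--(\ref{bicross4}) to a finite check. Theorem~\ref{calc bicross} then yields a strongly bicovariant $\Omega(\C[SO_{1,1}])\bicross\Omega(U(b_+))$ whose cross relations match the first stage and whose coproduct is the stated one, while differentiability of the coaction on $U(b_+)$ follows from the corollary after Theorem~\ref{calc bicross}.

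The main obstacle I expect is the bookkeeping around $\theta'$ combined with the dependence $c\extd c=s\extd s$ on the hyperbola. Since $\extd c$ and $\extd s$ are not independent, one must verify that the relations forced by differentiability are consistent modulo this identity, and that the non-primitive coproduct $\Delta_*\theta'=1\tens\theta'+\theta'\tens 1-1\tens\extd a_0-\extd a_0\tens 1+\Delta_*\extd a_0$ is exactly what the super-coderivation property requires once $\theta'$ is re-expressed through $\extd a_0$. Checking the cocycle-type condition (\ref{bicross4}) against the nonlinear action $a_0\la c=\lambda s^2$, $a_1\la s=\lambda(c^2-c)$ is where the bulk of the routine but delicate computation will lie.
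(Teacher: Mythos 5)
Your proposal follows the paper's own two-stage strategy: uniqueness by applying $\Delta_{R*}$ to the relations of the Sitarz calculus on the spacetime copy of $U(b_+)$ (with $\Delta_{R*}\theta'$ forced from $\Delta_{R*}\extd t$, exactly as you observe), then existence and identification as the super bicrossproduct $\Omega(\C[SO_{1,1}])\bicross\Omega(U(b_+))$ via Theorem~\ref{calc bicross}, the lemma following it, and the corollary giving differentiability of the canonical coaction. One bookkeeping claim in your uniqueness stage is off, though: the degree~1 (bimodule) relations of the Sitarz calculus do \emph{not} by themselves force the individual brackets of $\extd a_0,\extd a_1$ with $c,s$ and of $a_0,a_1$ with $\extd c,\extd s$. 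In the paper's computation, $\Delta_{R*}([\extd t,x])=\lambda\Delta_{R*}(\extd x)$ and $\Delta_{R*}([\extd x,t])=0$ only yield coupled relations such as $[\extd a_1,c]=[a_0,\extd s]$ and $[\extd a_0,c]-[a_1,\extd s]=\lambda\extd c$ (the latter after using $\extd(c^2-s^2)=0$ and the $[\extd x,x]=\lambda\theta'$ constraint to get $(\extd c)c=c\extd c=s\extd s=(\extd s)s$), i.e.\ only differences of the unknown brackets; it is the degree~2 conditions $\Delta_{R*}((\extd t)^2)=0$ and $\Delta_{R*}((\extd x)^2)=0$ that isolate $[\extd a_0,c]=\lambda c\,\extd c$ and its companions, after which the stated anticommutators and the $\theta'$ relations follow from the rest. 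Since your plan imposes the degree~2 relations anyway, the total constraint system you propose is the same as the paper's and does determine the calculus uniquely; just be aware that the division of labour between degree~1 and degree~2 constraints is not as you describe, so carrying out only the degree~1 analysis would leave the cross relations underdetermined.
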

\begin{proof} If  $\Delta_{R*}$ exists obeying (\ref{diff coact1}), we will need
\[\Delta_{R*}\extd t = 1\tens \extd a_0 + \extd t \tens c + \extd x \tens s + t\tens \extd c + x\tens \extd s,\]
\[\Delta_{R*}\extd x = 1\tens \extd a_1 + \extd t \tens s + \extd x \tens c + t\tens \extd s + x\tens \extd c,\]
\[\Delta_{R*}\theta' = 1\tens \theta' -1\tens \extd a_0 + \theta' \tens 1 -\extd t \tens 1 + \Delta_{R*}\extd t.  \]
By requiring $\Delta_{R*}([\extd t,x])= \Delta_{R*}(\lambda \extd x)$ and $\Delta_{R*}([\extd x, t]) = 0$, we find 
\[(\extd s)c = c\extd s, \quad (\extd c)s = s\extd c, \quad (\extd c)c + (\extd s)s - c \extd c - s\extd s = 0.\]
\[[\extd a_1, c] = [a_0, \extd s], \quad [\extd a_1, s]-[a_0, \extd c]+ \lambda ((\extd s) s - c \extd c) =0,\]
\[[a_1, \extd c] = [\extd a_0, s]-\lambda \extd s, \quad [\extd a_0, c]-[a_1, \extd s]+ \lambda ((\extd c)c - s\extd s - \extd c)=0.\]
Additionally, from $\extd(c^2 -s^2)=0$ combined with the condition $\extd c.c +\extd s.s -c\extd c-s\extd s=0$ above, we find that $(\extd c)c= s\extd s$ and $(\extd s)s = c\extd c$. But then by requiring $\Delta_{R*}([\extd x, x])=\Delta_{R*}(\lambda \theta')$ we also find that $(\extd c)c=c\extd c$ and $(\extd s)s = s\extd s$. Thus we have $(\extd c)c=c\extd c = s\extd s = (\extd s)s$. Also, by requiring $\Delta_{R*}(\extd x)^2=0$ and $\Delta_{R*}(\extd t)^2=0$, one can find that
\[[\extd a_0, \begin{pmatrix}
c \\ s
\end{pmatrix}] = \lambda c \begin{pmatrix}
\extd c \\ \extd s
\end{pmatrix}, \quad [\extd a_1, \begin{pmatrix}
c \\ s
\end{pmatrix}]= \lambda s \begin{pmatrix}
\extd c \\ \extd s
\end{pmatrix}\]
leading to the stated bimodule relations. The relations involving $\theta'$ are obtained by using the other  relations. One can also find that the stated relations on degree 2 hold and $\Omega(\C_\lambda[\rm{Poinc}_{1,1}])$ contains $\Omega(U(b_+))$ from the above calculation.
 
Moreover, $\Omega(\C[SO_{1,1}])$ is a super  ${\Omega(U(b_+))}$-module and $\Omega(U(b_+))$ is a super ${\Omega(\C[SO_{1,1}])}$-comodule with actions and coactions
\[\extd a_0 \la \begin{pmatrix}
c \\ s
\end{pmatrix} = \lambda c \begin{pmatrix}
\extd c \\ \extd s
\end{pmatrix}, \quad \extd a_1 \la \begin{pmatrix}
c \\ s
\end{pmatrix} = \lambda s \begin{pmatrix}
\extd c \\ \extd s
\end{pmatrix} ,\]
\[a_0 \la \begin{pmatrix}
\extd c \\ \extd s
\end{pmatrix} = \lambda s \begin{pmatrix}
\extd s \\ \extd c
\end{pmatrix}, \quad a_1 \la \begin{pmatrix}
\extd c \\ \extd s
\end{pmatrix} = \lambda (c-1) \begin{pmatrix}
\extd s \\ \extd c
\end{pmatrix},\]
\[\beta_*(\extd a_0) = \extd a_0 \tens c + \extd a_1 \tens s + a_0 \tens \extd c + a_1 \tens \extd s,\]
\[\beta_*(\extd a_1) = \extd a_0 \tens s + \extd a_1 \tens c + a_0 \tens \extd s + a_1 \tens \extd c,\]
\[\theta \la \begin{pmatrix}
c \\ s
\end{pmatrix} = \lambda (c-1)\begin{pmatrix}
\extd c \\ \extd s
\end{pmatrix}, \quad \beta_* (\theta') = \theta'\tens 1 - \extd a_0\tens 1 + \beta_*( \extd a_0),\]
and one can check that these obey the conditions for Theorem~\ref{calc bicross}. We can then construct the super bicrossproduct and identify \sloppy $\Omega(\C_\lambda[\mathrm{Poinc}_{1,1}]) = \Omega(\C[SO_{1,1}])\bicross\Omega(U(b_+))$ with coproducts as stated. 
\end{proof}


\begin{thebibliography}{1}
\bibitem{BaMa} E. Batista and S. Majid, Noncommutative geometry of angular momentum space $U(su_2)$, J. Math. Phys. 44 (2003) 107--137
\bibitem{BS}P. Bauman and F. Schmidt, Classification of bicovariant differential calculi over quantum groups (a representation-theoretic approach), Comm. Math. Phys. 194 (1998) 71--86
\bibitem{QRG} E. J. Beggs and S. Majid, {\em Quantum Riemannian Geometry}, Grundlehren der mathematischen Wissenschaften Vol. 355, Springer (2020) 809pp
\bibitem{Brz} T. Brzezi\'nski, Remarks on bicovariant differential calculi and exterior Hopf algebras, Lett. Math. Phys. 27 (1993) 287--300 
\bibitem{BrzMa} T. Brzezinski and S. Majid, Quantum group gauge theory on quantum spaces, Comm. Math. Phys. 157 (1993) 591--638
\bibitem{Dri} V.G. Drinfeld, Quantum groups, in: Proc. of the ICM, AMS, 1987
\bibitem{FRT}L. D. Faddeev, N. Yu. Reshetikhin and L. A. Takhtajan, Quantization of Lie groups and Lie algebras, Algebra i Analiz, 1 (1989) 178 -- 206
\bibitem{Ma:book} S. Majid, Foundation of Quantum Group Theory, Cambridge University Press, 2000, 640 pp.
\bibitem{Ma:phy}S. Majid, Physics for algebraists: non-commutative and non-cocommutative Hopf algebras by a bicrossproduct construction, J. Algebra 130 (1990) 17--64
\bibitem{Ma:dou}S. Majid, Doubles of quasitriangular Hopf algebras, Comm. Algebra. 19 (1991) 3061--3073
\bibitem{Ma:bra} S. Majid, Braided groups, J. Pure Applied Algebra 86 (1993) 187--221
\bibitem{Ma:skl} S. Majid, Braided matrix structure of the Sklyanin algebra and of the quantum Lorentz group,
Comm. Math. Phys. 156 (1993) 607--638
\bibitem{Ma:bos}S. Majid, Cross products by braided groups and bosonisation, J. Algebra 163 (1994) 165--190
\bibitem{Ma:cla}S. Majid, Classification of bicovariant differential calculi, J. Geom. Phys. 25 (1998) 119--140
\bibitem{Ma:fuz} S. Majid, $q$-Fuzzy spheres and quantum differentials on $B_q[SU_2]$ and $U_q(su_2)$, Lett. Math. Phys. 98  (2011) 167--191
\bibitem{Ma:hod} S. Majid, Hodge star as braided Fourier transform, Alg. Rep. Theory 20 (2017) 695--733 
\bibitem{MaOec} S.Majid, R. Oeckl, Twisting of quantum differentials and the Planck scale Hopf algebra, Commun. Math. Phys. 205 (1999) 617--655
\bibitem{MaTao} S. Majid, W. Q-.Tao, Duality for generalised differentials on quantum groups, J. Algebra 439 (2015) 67--109
\bibitem{Manin} Y.I. Manin, Topics in noncommutative geometry, M.B. Porter Lectures, Princeton University Press, Princeton, NJ (1991)
\bibitem{NME}F. Ngakeu, S. Majid and J-P. Ezin, Classification of differentials and Cartan calculus on bicrossproducts, Acta. Applic. Math. 84 (2004) 193--236
\bibitem{Oec} R. Oeckl, Classification of differential calculi on $U_q(b_+)$, classical limits and duality, J. Math. Phys. 40 (1999) 3588--3603
\bibitem{Rad} D. Radford, The structure of Hopf algebras with a projection, J. Algebra, 92 (1985) 322--347
\bibitem{Sit}A. Sitarz,  Noncommutative differential calculus on the $\kappa$-Minkowski space, Phys. Lett.
B 349 (1995) 42--48
\bibitem{Wor} W.L. Woronowicz, Differential calculus on compact matrix pseudogroups (quantum groups), Commun. Math. Phys., 122 (1989) 125--170
\end{thebibliography}
\end{document}